\newtheorem{theorem}{Theorem}
\newtheorem*{theorem*}{Theorem}
\newtheorem*{remark*}{Remark}
\newtheorem*{lemma*}{Lemma}
\newtheorem{example}[theorem]{Example}
\newtheorem*{example*}{Example}
\newtheorem*{examples*}{Examples}
\newtheorem*{exercise*}{Exercise}
\newtheorem{definition}[theorem]{Definition}
\newtheorem*{definition*}{Definition}
\newtheorem{lemma}[theorem]{Lemma}
\newtheorem{proposition}[theorem]{Proposition}
\newtheorem{remark}[theorem]{Remark}
\newtheorem{cor}[theorem]{Corollary}
\newtheorem*{cor*}{Corollary}
\newtheorem{question}{Question}
\newtheorem*{conjecture*}{Conjecture}
\numberwithin{theorem}{section}
\renewcommand\iff{%
\ifmmode\text{ if and only if }%
\else if and only if \fi}
\newcommand{\nf}{\nicefrac}
\renewcommand{\and}{\wedge}
\renewcommand{\phi}{\varphi}
\newcommand{\Mod}{\textnormal{Mod}}
\renewcommand{\mod}{\text{mod}}
\newcommand{\Ab}{\text{Ab}}
\newcommand{\rad}{\textnormal{rad}}
\newcommand{\End}{\textnormal{End}}
\newcommand{\Hom}{\textnormal{Hom}}
\newcommand{\Ext}{\textnormal{Ext}}
\newcommand{\Zg}{\textnormal{Zg}}
\newcommand{\pinj}{\textnormal{pinj}}
\newcommand{\im}{\textnormal{im}}
\newcommand{\mcal}[1]{\mathcal{#1}}
\newcommand{\mfrak}[1]{\mathfrak{#1}}
\newcommand{\st}{\ \vert \ }
\newcommand{\pp}{\textnormal{pp}}
\newcommand{\N}{\mathbb{N}}
\newcommand{\Z}{\mathbb{Z}}
\newcommand{\coker}{\text{coker}}
\newcommand{\Tf}{\text{Tf}}
\newcommand{\Latt}{\text{Latt}}
\newenvironment{pmat}{\left( \begin{smallmatrix}}{\end{smallmatrix} \right)}
\title{Interpretation functors which are full on pure-injective modules with applications to $R$-torsion-free modules over $R$-orders.}
\author{Lorna Gregory}
\address{School of Mathematics, University of East Anglia, Norwich, NR4 7TJ, UK}
\email{Lorna.Gregory@gmail.com}
\thanks{The author was supported by PRIN 2017-Mathematical Logic: models, sets, computability.}
\subjclass[2020]{03C60, 16G30 (primary), 03C98, 16H20}
\keywords{Orders over a DVR, Pure-injective, Ziegler spectrum, B\"ackstr\"om orders, Interpretation functor}
\begin{document}

\begin{abstract}
Let $R,S$ be rings, $\mcal{X}\subseteq \mod$-$R$ a covariantly finite subcategory, $\mcal{C}$ the smallest definable subcategory of $\Mod$-$R$ containing $\mcal{X}$ and $\mcal{D}$ a definable subcategory of $\Mod$-$S$. We show that if $I:\mcal{C}\rightarrow \mcal{D}$ is an interpretation functor such that $I\mcal{X}\subseteq \mod$-$S$ and whose restriction to $\mcal{X}$ is full then $I$ is full on pure-injective modules. We apply this theorem to an extension of a functor introduced by Ringel and Roggenkamp which, in particular, allows us to describe the torsion-free part of the Ziegler spectra of tame B\"ackstr\"om orders. We also introduce the notion of a pseudogeneric module over an order which is intended to play the same role for lattices over orders as generic modules do for finite-dimensional modules over finite-dimensional algebras.
\end{abstract}

\maketitle

\section{Introduction}
Interpretation functors are a uniform additive version of the model theoretic notion of interpretation for (definable subcategories of) module categories. Algebraically, they are
additive functors between definable subcategories of module categories which commute with direct limits and products. They are key tools for studying module categories from the perspective of model theory and for the algebraic study of purity.

When an interpretation functor $I:\mcal{C}\rightarrow \mcal{D}$ is full on pure-injective modules the connection between $\mcal{C}$ and $\mcal{D}$ is much closer than for other interpretation functors (more precisely, the connection between $\mcal{C}$ ``modulo'' $\ker I$ and $I\mcal{C}$). Model theoretically being full on pure-injective modules corresponds, \cite[3.17]{Intmodinmod}, to the interpretation given by $I$ ``preserving all induced structure''. 

The main theorem of this paper (see \ref{fullonfpimpliesfullonpureinjectives}) is the following.  Here, $\mod\text{-}R$ denotes the category of finitely presented (right) $R$-modules.

\begin{theorem*}\label{fullonfpimpliesfullonpureinjectives}
Let $R,S$ be rings, $\mcal{X}\subseteq \mod\text{-}R$ be covariantly finite, $\mcal{C}$ the smallest definable subcategory containing $\mcal{X}$ and $\mcal{D}$ a definable subcategory of $\Mod\text{-}S$. If  $I:\mcal{C}\rightarrow \mcal{D}$ is an interpretation functor and $I$ is full on $\mcal{X}$ and sends modules in $\mod\text{-}R$ to modules in $\mod\text{-}S$ then $I$ is full on pure-injectives.
\end{theorem*}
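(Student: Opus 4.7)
The strategy is to extend the fullness of $I$ from $\mcal{X}$ through a chain of progressively larger classes---first to $\textnormal{Add}(\mcal{X})$, then to the direct-limit closure $\varinjlim \mcal{X}$ inside $\mcal{C}$, and finally to all pure-injectives in $\mcal{C}$---using the preservation properties of interpretation functors (additivity, commutation with direct limits and direct products) to reduce each step to the previous one. The hypothesis $I(\mod\text{-}R) \subseteq \mod\text{-}S$ is used to make $IX$ compact in $\Mod\text{-}S$ for $X \in \mcal{X}$, so that maps out of $IX$ into a direct sum factor through a finite subsum; the covariantly finite hypothesis enters at the crucial direct-limit step to ensure that the lifts produced locally can be made globally coherent.

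The first step, extending fullness to $\textnormal{Add}(\mcal{X})$, should be routine: given $f \colon I(\bigoplus_i X_i) \to I(\bigoplus_j Y_j)$, which by commutation with sums is a map $\bigoplus_i IX_i \to \bigoplus_j IY_j$, each component $IX_i \to \bigoplus_j IY_j$ factors through some finite subsum $\bigoplus_{j \in J_i} IY_j$ by compactness of $IX_i$, lifts to a map $X_i \to \bigoplus_{j \in J_i} Y_j$ by fullness on $\mcal{X}$ (after replacing $\mcal{X}$ by $\textnormal{add}(\mcal{X})$ if necessary), and the lifts assemble by additivity. The harder step is the extension to $\varinjlim \mcal{X}$: writing $M \in \mcal{C}$ as $\varinjlim_k X_k$ with $X_k \in \mcal{X}$ and using $IM = \varinjlim_k IX_k$, every $f \colon IM \to IN$ restricts to $f_k \colon IX_k \to IN$, and each $f_k$ lifts to some $g_k \colon X_k \to N$ by the previous step. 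However the family $(g_k)$ is not a priori coherent along the directed system. This is where the covariantly finite hypothesis has to do its work: the existence of $\mcal{X}$-approximations should guarantee that any local compatibility failure can be absorbed by passing to an approximation further along, so that after replacement by a cofinal subsystem the $g_k$ fit together into a single $g \colon M \to N$ with $Ig = f$.

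For the final step, every pure-injective $N \in \mcal{C}$ is obtained from modules in $\varinjlim \mcal{X}$ by iterating direct products, pure submodules and pure-injective hulls, since $\mcal{C}$ is the definable subcategory generated by $\mcal{X}$. Because $I$ commutes with products and summands and (as a consequence of commuting with direct limits and products) preserves pure-exact sequences, fullness on $\varinjlim \mcal{X}$ transfers to all pure-injectives; the key additional ingredient is that maps into pure-injective modules are determined by their effect on pp-types of finitely presented source modules, which is controlled by what happens on $\mcal{X}$. The main obstacle, as flagged above, is the coherence problem in the direct-limit step: fullness on $\mcal{X}$ produces lifts only up to $\ker I$, and turning these pointwise lifts into a coherent system requires a genuinely structural use of covariant finiteness (or, alternatively, a model-theoretic argument exploiting the saturation of pure-injective $N$ to realise the required compatible pp-type). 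That coherence argument is the technical crux of the proof.
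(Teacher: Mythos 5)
Your high-level outline correctly senses that the crux is a coherence problem along a directed system and that pure-injectivity must enter to solve it, but the proposal both misidentifies where covariant finiteness does its work and asserts an intermediate claim that is false, while the actual technical content that makes the argument go through is absent.

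The intermediate claim in your step 2 --- that fullness extends from $\mcal{X}$ to all of $\overrightarrow{\mcal{X}}=\mcal{C}$, with the $\mcal{X}$-approximations ``absorbing local compatibility failure'' to make the $g_k$ cohere --- cannot be right as stated, because the theorem does \emph{not} assert fullness on $\mcal{C}$, only on pure-injectives. The paper's own example (torsion-free modules over a non-complete DVR $R$, $I(M)=M/M\pi$) exhibits $I$ full on $\mcal{X}$ but with $\Hom_R(\widehat R, R)=0$ while $\Hom(I\widehat R, IR)\neq 0$; here $R$ lies in $\mcal{X}\subseteq\overrightarrow{\mcal{X}}$ but is not pure-injective. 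So no amount of covariant finiteness alone will repair the coherence for an arbitrary target in $\mcal{C}$: you genuinely need $N$ pure-injective. Concretely, the issue is that after choosing arbitrary lifts $\epsilon_i\colon L_i\to N$ of $g\circ If_i$, the set of all lifts of $g\circ If_i$ is the coset $\epsilon_i + \{\epsilon : I\epsilon = 0\}$; to choose a compatible family one must solve an infinite system of linear conditions together with the constraints ``$I\epsilon = 0$'', and this system need not have a solution unless $N$ is algebraically compact.

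What is missing is the technical device that makes this solvable by pure-injectivity: one must show that the condition ``$I\epsilon=0$'' on $\epsilon\in\Hom(L_i,N)$ is a pp-definable condition on the image of a fixed generating tuple of $L_i$. For this, the paper uses covariant finiteness to present $I|_{\mcal{C}}$ (composed with the forgetful functor) as $\coker\Hom_R(\delta,-)$ with $\delta\colon A\to B$ and $A,B\in\mcal{X}$; then ``$I\epsilon=0$'' becomes ``for each $\alpha\in\Hom(A,L_i)$, $\epsilon\circ\alpha$ factors through $\delta$'', and each such constraint translates into a pp-formula $\chi^\alpha$ over $R$ (Lemma~\ref{morphkerint}). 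The lifting and compatibility constraints together then form a system of cosets of pp-definable subgroups of $N$ indexed over the directed set; finite solvability is checked by passing to an upper bound $k$ in the directed set and pushing $\epsilon_k$ backwards, and algebraic compactness of $N$ supplies a global solution. So covariant finiteness serves (i) to express every $N'\in\mcal{C}$ as a direct limit of modules in $\mcal{X}$, and (ii) to get the presentation of $I$ with $A,B\in\mcal{X}$ --- not to fix coherence. Your first step ($\textnormal{Add}(\mcal{X})$) and third step (iterating products and pure submodules) are detours: the paper's Lemma~\ref{rightsidefull} already gives surjectivity of $\Hom_R(L,N)\to\Hom_S(IL,IN)$ for any $L\in\mcal{X}$ and any $N\in\mcal{C}$ directly from $IL$ being finitely presented, and since $\varinjlim\mcal{X}=\mcal{C}$ there is no further class to climb to after handling the direct-limit step.
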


In general, interpretation functors do not send finitely presented modules to finitely presented modules. However, there are naturally occurring situations when they do. 
For instance, this is the case for $k$-linear interpretation functors between categories of modules over finite-dimensional $k$-algebras (see \ref{noethklin} for a generalisation of this observation). 
Moreover, many functors used in the literature to understand one category of finitely presented modules in terms of another extend to interpretation functors between categories of non-finitely presented modules. Thus, in these situations, our theorem can be applied. 

One such example is Butler's functor, \cite{Butler}, between a large subcategory of lattices over the $\widehat{\Z_2}$-order $\widehat{\Z_2}C_2\times C_2$ to the category of finite-dimensional $\Z/2\Z$-vector spaces with $4$ distinguished subspaces fulfills the hypotheses of our theorem\footnote{One needs to check that the category of ``$b$-reduced'' lattices is a covariantly finite subcategory of $\mod$-$\widehat{\Z_2}C_2\times C_2$}. Using clever ad hoc methods, in \cite{Klein4}, Puninski and Toffalori showed that its extension to an interpretation functor is full on pure-injective $\widehat{\Z_2}$-torsion free $\widehat{\Z_2}C_2\times C_2$-modules. 

The second half of this article, section \ref{app}, will give another such application using a functor defined by Ringel and Roggenkamp in \cite{RR}. Let $R$ be a complete discrete valuation domain with maximal ideal generated by $\pi$ and field of fractions $Q$. Let $\Lambda$ be an $R$-order with $A:=Q\Lambda$ a separable $Q$-algebra. Let $\Gamma$ be a hereditary $R$-order such that $\Lambda\subseteq \Gamma\subseteq A$ and let $I\subseteq \rad(\Lambda)$ be an ideal of $\Gamma$ such that $\pi^n\in I$ for some $n\in\N$. Let \[D:=
                                                          \begin{pmat}
                                                            \Lambda/I & \Gamma/I \\
                                                            0 & \Gamma/I \\
                                                          \end{pmat}.\]
Ringel and Roggenkamp defined a full (but not faithful) additive functor from the category of $\Lambda$-lattices to the category of finitely generated $D$-modules. This functor extends to an interpretation functor $\mathbb{F}:\Tf_\Lambda\rightarrow \Mod\text{-}D$ where $\Tf_\Lambda$ denotes the category of $R$-torsion-free $\Lambda$-modules. Since categories of $\Lambda$-lattices are covariantly finite and the smallest definable subcategory containing the $\Lambda$-lattices is $\Tf_\Lambda$, Theorem \ref{fullonfpimpliesfullonpureinjectives} implies that $\mathbb{F}$ is full on pure-injectives (see \ref{RRpureinj}). The kernel of $\mathbb{F}$ consists exactly of the $R$-divisible $R$-torsion-free $\Lambda$-module i.e. the restrictions of  $Q\Lambda$-modules.

A useful property of interpretation functors which are full on pure-injective modules is that they induce homeomorphisms between parts of Ziegler spectra. The Ziegler spectrum of a ring $S$, denoted $\Zg_S$, is the set of isomorphism types of indecomposable pure-injective $S$-modules with closed sets those of the form $\mcal{D}\cap\Zg_S$ where $\mcal{D}$ is a definable subcategory. For $\mcal{C}$ a definable subcategory of $\Mod$-$S$, we write $\Zg(\mcal{C})$ for the subspace $\Zg_S\cap \mcal{C}$.
Prest showed in \cite[3.19]{Intmodinmod} that if $I:\mcal{C}\rightarrow \mcal{D}$ is an interpretation functor which is full on pure-injective modules then the map $N\mapsto IN$ induces a homeomorphism from the open subset $\Zg(\mcal{C})\backslash \ker I$ and its image in $\Zg(\mcal{D})$, which is closed. The ($R$-)torsion-free part $\Zg_\Lambda^{tf}:=\Zg(\Tf_\Lambda)$ of the Ziegler spectrum of $\Lambda$ was studied in \cite{tfpartRG}, \cite{Klein4}, \cite{Tfpart} and \cite{Maranda}. Under our hypotheses on $\Lambda$, every $R$-torsion-free $\Lambda$-module decomposes as $D\oplus N$ where $D$ is $R$-divisible and $N$ is $R$-reduced i.e. $\cap_{i\in\N}N\pi^i=\{0\}$. Thus, \ref{RRhomeomorphism}, the functor of Ringel and Roggenkamp induces a homeomorphism from the open subset of $R$-reduced modules in $\Zg_\Lambda^{tf}$ to a closed subset of $\Zg_D$. Note that since $Q\Lambda$ is semi-simple there are only finitely many points in $\Zg_\Lambda^{tf}$ which are not $R$-reduced.

In subsection \ref{ZgFgeneral} we review the general features of $\Zg_\Lambda^{tf}$ and explain how to use $\mathbb{F}$ to completely understand the topology on $\Zg_\Lambda^{tf}$ in terms of $\Zg_D$.

Generic modules, i.e. indecomposable non-finitely presented modules which are finite length as modules over their endomorphism rings, play an important role in the representation theory of Artin algebras. However, if an $R$-torsion-free $\Lambda$-module is finite length over its endomorphism ring then it is $R$-divisible. So, although the indecomposable divisible modules are important they do not play the same role for lattices over orders as generic modules play for finitely presented modules over Artin algebras.
 We propose the notion of pseudogeneric $R$-torsion-free $\Lambda$-modules as a replacement for generic modules over Artin algebras. In subsection \ref{SecPsG}, after proving some routine results about pseudogenerics in analogy with generic modules, we show that Ringel and Roggenkamp's functor $\mathbb{F}$ sends pseudogeneric modules to generic modules. Moreover, we show that, under a variety of conditions, $\mathbb{F}$ gives a bijection between the generic modules in $\mcal{D}$ and the pseudogeneric $\Lambda$-modules. As a consequence, we show that tame B\"ackstr\"om orders of infinite representation type have finitely many pseudogeneric $R$-torsion-free $\Lambda$-modules.

When $\Lambda$ is a tame B\"ackstr\"om order of infinite lattice type, $D$ is a tame hereditary algebra. In subsection \ref{tamebkzg}, we use the description of the Ziegler spectrum of tame hereditary algebras and the results in subsection \ref{ZgFgeneral}, to give a description of $\Zg_\Lambda^{tf}$ when $\Lambda$ is a tame B\"ackstr\"om order of infinite lattice type.

Since the paper is relatively short, we present necessary background material when it is needed. For a general introduction to the algebraic study of Model Theory of Modules see \cite{PrestBluebook} or \cite{PSL}. 

\section{Interpretation functors which are full on pure-injectives}\label{Fullonpi}
\noindent
A (right) \textbf{pp-$n$-formula} is a formula in the language of (right) $R$-modules of the form \[\exists \mathbf{y}\ (\mathbf{x}\mathbf{y})A=0\] where $\mathbf{x}$ is an $n$-tuple of variables, $\mathbf{y}$ is a finite tuple of variables and $A$ is an appropriately sized matrix with entries from $R$. For $M\in\Mod\text{-}R$ and $\phi$ a pp-$n$-formula we write $\phi(M)$ for the solution set of $\phi$ in $M$. Note that this is a subgroup of $M^n$. After identifying pp-$n$-formula whose solution sets are equal in all $R$-modules, the set of pp-$n$-formulae becomes a lattice under inclusion of solution sets, i.e. $\psi\leq \phi$ if $\psi(M)\subseteq \phi(M)$ for all $M\in\Mod\text{-}R$. We denote this lattice $\pp_R^n$. For $X$ a class of $R$-modules, we write $\pp_R^n(X)$ for the quotient of $\pp_R^n$ under the equivalence relation $\phi\sim_X\psi$ if $\phi(M)=\psi(M)$ for all $M\in X$. 
A \textbf{pp-pair}, written $\nf{\phi}{\psi}$, is a pair of pp formulae $\phi,\psi$ such that $\phi(M)\supseteq \psi(M)$ for all $M\in \Mod\text{-}R$.

An embedding $i:A\rightarrow B$ of $R$-modules is \textbf{pure} if for all $\phi\in\pp_R^1$, $\phi(B)\cap iA=i\phi(A)$. An $R$-module $M$ is \textbf{pure-injective} if every pure-embedding $i:M\rightarrow B$ splits. Equivalently, \cite[4.3.11]{PSL}, $N$ is pure-injective if and only if it is \textbf{algebraically compact}. That is, for all $n\in\N$, if for each $i\in \mcal{I}$, $\mathbf{a_i}\in N$ is an $n$-tuple and $\phi_i$ is a pp-$n$-formula then $\bigcap_{i\in \mcal{I}}\mathbf{a_i}+\phi_i(N)=\emptyset$ implies there is some finite subset $\mcal{I}'$ of $\mcal{I}$ with $\bigcap_{i\in \mcal{I}'}\mathbf{a_i}+\phi_i(N)=\emptyset$. We will use the formally stronger equivalent condition (see \cite[4.2.3]{PSL}) given in the following proposition.

\begin{proposition} Let $M$ be a pure-injective $R$-module. Let $(x_j)_{j\in J}$ be a sequence of variables and for each finite subset $E\subseteq J$, write $\mathbf{x}_E$ for the tuple of variables $x_j$ with $j\in E$. For $i\in I$, let $\phi_i$ be a pp formula in variables $\mathbf{x}_{E_i}$ and $\mathbf{a}_i$ a tuple of elements of $M$ indexed by $E_i$.  If for each finite subset $F\subseteq I$ there exists a sequence of elements $(m_j)_{j\in J}$ in $M$ such that $m_{E_i}\in \mathbf{a}_i+\phi_i(M)$ for all $i\in F$ then there exists a sequence of elements $(m_j)_{j\in J}$ in $M$ such that $m_{E_i}\in \mathbf{a}_i+\phi_i(M)$ for all $i\in I$.
\end{proposition}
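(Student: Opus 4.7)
The plan is to deduce this infinitary algebraic-compactness statement from the finite-tuple version recalled immediately before the proposition, using a Zorn's lemma argument that extends the partial solution one variable at a time. Finite-variable algebraic compactness is used at each extension step.

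First I would set up Zorn. Let $\mathcal{P}$ be the set of partial functions $f\colon K\to M$, where $K\subseteq J$, such that the system $\Sigma_f$ consisting of $\{x_j=f(j)\mid j\in K\}$ together with $\{\mathbf{x}_{E_i}\in \mathbf{a}_i+\phi_i(M)\mid i\in I\}$ is finitely satisfiable in $M$. The hypothesis says exactly that the empty partial function lies in $\mathcal{P}$, so $\mathcal{P}$ is non-empty; finite satisfiability is determined by finite subsystems, so unions of chains stay in $\mathcal{P}$. Choose a maximal element $f\colon K\to M$ and suppose for contradiction that $K\neq J$. Pick $j^{*}\in J\setminus K$; the goal is to find a value $m\in M$ for $x_{j^*}$ so that $f\cup\{j^*\mapsto m\}$ still lies in $\mathcal{P}$, contradicting maximality.

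Next I would use finite-variable algebraic compactness to locate such an $m$. For each finite $F\subseteq I$ and finite $K'\subseteq K$, put $V=K'\cup\{j^*\}\cup\bigcup_{i\in F}E_i$ and let $S_{F,K'}\subseteq M$ be the set of $m\in M$ for which there exists $(n_j)_{j\in V}\in M^{V}$ with $n_{j^*}=m$, $n_j=f(j)$ for $j\in K'$, and $n_{E_i}\in \mathbf{a}_i+\phi_i(M)$ for all $i\in F$. Because pp formulae are closed under conjunction (intersection) and projection (existential quantification), and cosets of pp-definable subgroups of $M^{V}$ project to cosets of pp-definable subgroups of $M$, each $S_{F,K'}$ is a coset $b_{F,K'}+\psi_{F,K'}(M)$ for a suitable pp-1-formula $\psi_{F,K'}$. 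It is non-empty since $\Sigma_f$ is finitely satisfiable, and the collection $\{S_{F,K'}\}$ is downward directed under $(F,K')\mapsto(F_1\cup F_2,K'_1\cup K'_2)$, hence has the finite intersection property. Applying algebraic compactness with $n=1$ to the family $\{b_{F,K'}+\psi_{F,K'}(M)\}$ produces $m\in\bigcap_{F,K'}S_{F,K'}$. One then checks that $f\cup\{j^*\mapsto m\}$ lies in $\mathcal{P}$: any finite subsystem of $\Sigma_{f\cup\{j^*\mapsto m\}}$ uses only finitely many $i$'s (call this set $F$) and finitely many $j\in K$ (call this set $K'$), and the membership $m\in S_{F,K'}$ supplies the required satisfying assignment. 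This contradicts the maximality of $f$, so $K=J$ and $f$ is the desired sequence.

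The main obstacle, and the only nontrivial point, is verifying that the one-variable slices $S_{F,K'}$ really are cosets of pp-definable subgroups of $M$ so that the stated finite-tuple algebraic compactness applies. This comes down to the standard facts that pp formulae are closed under conjunction and existential quantification of variables, and that under these operations cosets of pp-definable subgroups remain cosets of pp-definable subgroups; once this is in hand the rest is a routine Zorn/finite-intersection bookkeeping argument.
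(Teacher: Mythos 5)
The paper does not prove this proposition; it simply quotes it from \cite[4.2.3]{PSL}, so there is no in-paper argument to compare against. Your proposal is a correct, self-contained derivation from the finite-tuple form of algebraic compactness stated just before it. The one genuinely load-bearing point --- that each slice $S_{F,K'}$ is either empty or a coset of a pp-definable subgroup of $M$ --- is exactly the right thing to isolate, and your justification is sound: a non-empty finite intersection of cosets $\mathbf{b}_k+G_k(M^V)$ is a coset of $\bigcap_k G_k(M^V)$, pp-definability is preserved under conjunction and under projection (existential quantification), and the projection of a coset is a coset of the projected subgroup; non-emptiness of each $S_{F,K'}$ follows from finite satisfiability of $\Sigma_f$ since $x_{j^*}$ is unconstrained there. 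The remaining steps (closure of $\mathcal{P}$ under unions of chains because finite satisfiability only ever references finitely many constraints, downward directedness of the $S_{F,K'}$ giving the finite intersection property, and the final observation that $K=J$ forces $f_{E_i}\in\mathbf{a}_i+\phi_i(M)$ for every $i$ because each such condition together with $x_j=f(j)$, $j\in E_i$, is itself a finite subsystem) all check out.
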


A full subcategory $\mcal{D}\subseteq \Mod\text{-}R$ is \textbf{definable} if it is closed under products, direct limits and pure-submodules. Equivalently, \cite[3.4.7]{PSL}, a full subcategory $\mcal{D}\subseteq \Mod\text{-}R$ is definable if there exist pp-pairs $\nf{\phi_i}{\psi_i}$ for $i\in I$ such that the modules $M\in \mcal{D}$ are exactly those with $\phi_i(M)=\psi_i(M)$ for all $i\in I$. For any class of $R$-modules $X$, we write $\langle X\rangle$ for the smallest definable subcategory containing $X$.

A full subcategory $\mcal{X} \subseteq \mod\text{-}R$ which is closed under isomorphism, finite direct sums and taking direct summands is \textbf{covariantly finite} in $\mod\text{-}R$ if every $M\in \mod\text{-}R$ has a \textbf{left $\mcal{X}$-approximation} i.e. a homomorphism $f_M:M\rightarrow M_\mcal{X}$ with $M_\mcal{X}\in \mcal{X}$ such that all homomorphisms $g:M\rightarrow L$ with $L\in\mcal{X}$, factor through $f_M$.

For any $\mcal{X}\subseteq \Mod\text{-}R$, let $\overrightarrow{\mcal{X}}$ denote the full subcategory of modules which are direct limits of modules in $\mcal{X}$.

\begin{theorem}\cite[3.4.37]{PSL}
An additive (full) subcategory $\mcal{X}\subseteq \mod\text{-}R$ is covariantly finite in $\mod\text{-}R$ if and only if $\overrightarrow{\mcal{X}}$ is a definable subcategory.
\end{theorem}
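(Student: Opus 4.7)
The plan is first to isolate, via a simple factorisation criterion, those closure properties required for definability (direct limits, pure submodules, products) that hold for $\overrightarrow{\mcal{X}}$ automatically, and then to identify the remaining one---closure under products---with covariant finiteness of $\mcal{X}$. The criterion in question, valid because $\mcal{X}$ is additive and summand-closed, is that $N\in\overrightarrow{\mcal{X}}$ if and only if every homomorphism $F\to N$ from a finitely presented $F$ factors through some object of $\mcal{X}$: the forward direction is immediate from compactness of fp modules under filtered colimits, and the reverse follows by expressing $N$ as the colimit over the (filtered) category of all such factorisations, assembling the diagram via closure of $\mcal{X}$ under finite direct sums.

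Granted the criterion, closure of $\overrightarrow{\mcal{X}}$ under direct limits is trivial. For closure under pure submodules, suppose $N\hookrightarrow M$ is pure with $M\in\overrightarrow{\mcal{X}}$, and take $f:F\to N$ with $F$ fp; the composite $F\to N\to M$ factors as $F\xrightarrow{\alpha}X\xrightarrow{\beta}M$ through some $X\in\mcal{X}$, and then the diagrammatic form of purity, applied to the commutative square with corners $F,X,N,M$, produces a filler $h:X\to N$ with $h\alpha=f$, so $f$ factors through $X\in\mcal{X}$ and hence $N\in\overrightarrow{\mcal{X}}$.

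The substantive step is the equivalence that $\overrightarrow{\mcal{X}}$ is closed under products if and only if $\mcal{X}$ is covariantly finite. If $\mcal{X}$ is covariantly finite and $f:F\to\prod_iM_i$ is a map from fp $F$ with $M_i\in\overrightarrow{\mcal{X}}$, each component $F\to M_i$ factors through some $X_i\in\mcal{X}$, and each resulting map $F\to X_i$ factors through the left $\mcal{X}$-approximation $F\to F_{\mcal{X}}$; the induced maps $F_{\mcal{X}}\to M_i$ assemble into a factorisation of $f$ through $F_{\mcal{X}}\in\mcal{X}$. Conversely, for $M\in\mod$-$R$, pick iso-class representatives of $\mcal{X}$ and form the canonical $\eta:M\to\prod_{X,g}X$ indexed by all pairs $(X,g)$ with $X$ a representative and $g\in\Hom(M,X)$; the target lies in $\overrightarrow{\mcal{X}}$ by hypothesis, so by the factorisation criterion and finite presentation of $M$, the map $\eta$ factors as $M\to F_{\mcal{X}}\to\prod X$ for some $F_{\mcal{X}}\in\mcal{X}$, and projecting onto each component shows every $g:M\to X$ with $X\in\mcal{X}$ factors through $F_{\mcal{X}}$, which is thus a left $\mcal{X}$-approximation.

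The step I expect to be most delicate is the pure-submodule closure, where one must carefully invoke the diagrammatic form of purity (using that both $F$ and the approximating $X$ are finitely presented, which is what lets that characterisation fire); once the factorisation criterion for membership in $\overrightarrow{\mcal{X}}$ is in hand, the remaining pieces are essentially bookkeeping.
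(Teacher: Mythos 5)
The paper does not prove this statement; it is quoted from \cite[3.4.37]{PSL}, so there is no in-text proof to compare against. Your argument is correct and is essentially the standard one from the cited source: the Lenzing-type criterion that $N\in\overrightarrow{\mcal{X}}$ iff every map from a finitely presented module into $N$ factors through $\mcal{X}$ (this is \cite[3.4.36]{PSL}, and note that verifying filteredness of the category of factorisations --- coequalising parallel pairs --- itself uses the factorisation hypothesis applied to a cokernel), after which closure under direct limits and pure submodules is automatic and closure under products is exactly covariant finiteness, argued in both directions just as you do.
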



Let $\mcal{C}\subseteq \Mod\text{-}R$ and $\mcal{D}\subseteq \Mod\text{-}S$ be definable subcategories.  
Let $\phi(\mathbf{x})/\psi(\mathbf{x})$ be a pp-$m$-pair over $R$ and for each $s\in S$, let $\rho_s(\overline{x},\overline{y})$ be a pp-$2m$-formula such that for each $M\in\mcal{C}$, the solution set $\rho_s(M,M)\subseteq M^m\times M^m$ defines an endomorphism $\rho_s^M$ of the abelian group $\phi(M)/\psi(M)$ and such that $\phi(M)/\psi(M)$ is a $S$-module in $\mcal{D}$ when for all $s\in S$, the action of $s$ on $\phi(M)/\psi(M)$ is given by $\rho^M_s$. 
In this situation, $(\nf{\phi}{\psi};(\rho_s)_{s\in S})$ defines an additive functor $I:\mcal{C}\rightarrow \mcal{D}$. Following \cite{Intmodinmod}, we call any functor equivalent to one defined in this way an \textbf{interpretation functor}. 
We will refer to $\nf{\phi}{\psi}$ as the underlying pp-pair of $I$. Of course this is not a well-defined notion as the same interpretation functor will have many different underlying pp-pairs.

The following theorem, due to Prest in full generality, and Krause in a special case which includes the cases in which we are interested in this paper, gives a completely algebraic characterisation of interpretation functors.

\begin{theorem}\cite[25.3]{DefAddCats}\cite[7.2]{ExDefCats}
An additive functor $I:\mcal{C}\rightarrow \mcal{D}$ is an interpretation functor if and only if $I$ commutes with direct limits and product.
\end{theorem}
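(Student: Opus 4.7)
The forward direction is a routine verification that I would dispatch first. If $I$ is defined by the data $(\phi/\psi;(\rho_s)_{s\in S})$, then both the underlying abelian-group-valued functor $M\mapsto \phi(M)/\psi(M)$ and the pp-definable $S$-action are preserved by products and by direct limits: pp-solution sets in a product $\prod_i M_i$ decompose as $\prod_i\phi(M_i)$, and pp-solution sets commute with filtered colimits by a syntactic induction on the existential prefix of $\phi$. The quotient $\phi(M)/\psi(M)$ and the action $\rho_s^M$ are therefore compatible with both operations, so $I$ commutes with direct limits and products.

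For the converse, suppose $I:\mcal{C}\to\mcal{D}$ is additive and commutes with direct limits and products. My plan is to reconstruct the defining data $(\phi/\psi;(\rho_s)_{s\in S})$ by working in the relevant functor category. I would first compose with the forgetful functor $U:\mcal{D}\to\Ab$ to obtain $I_\Ab:=U\circ I:\mcal{C}\to\Ab$, which still commutes with direct limits and products. The target at this stage is to identify $I_\Ab$ with a pp-pair functor $M\mapsto \phi(M)/\psi(M)$ for some $\phi/\psi$ over $R$. The tool is the standard equivalence between definable subcategories and (opposites of) free abelian categories built out of the $\pp_R^m$-lattices: under this equivalence, additive functors $\mcal{C}\to\Ab$ that preserve products and direct limits correspond precisely to pp-pairs in $\pp_R^m(\mcal{C})$ for some $m$. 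Once $I_\Ab$ is pinned down as a pp-pair, I recover the $S$-action as follows. For each $s\in S$, right multiplication by $s$ on $IM\in\mcal{D}$ gives a natural endomorphism $\mu_s$ of $I_\Ab$, and morphisms between pp-pair functors in $(\mod\text{-}R,\Ab)$ are themselves described by pp-$2m$-formulae whose solution sets are the graphs of the induced maps $\phi(M)/\psi(M)\to\phi(M)/\psi(M)$. Taking $\rho_s$ to be such a formula for each $s\in S$ and checking that the pp-definable actions satisfy the ring relations of $S$ on every $M\in\mcal{C}$ (which reduces to equalities of pp-pair morphisms in the functor category) assembles the interpretation functor data.

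The main obstacle is the functor-category identification in the first stage: showing that an additive functor $\mcal{C}\to\Ab$ preserving direct limits and products must arise from a pp-pair. Two subtleties demand care. First, $\mcal{C}$ is in general a proper definable subcategory, so $\mcal{C}\cap\mod\text{-}R$ need not be all of $\mod\text{-}R$; one has to work with $\pp_R^m(\mcal{C})$ rather than $\pp_R^m$ and quotient the ambient functor category by the Serre subcategory of functors vanishing on $\mcal{C}$, invoking the covariantly-finite theorem cited earlier in the excerpt to guarantee enough finitely presented generators for the calculation to go through. Second, preservation of direct limits alone would only force $I_\Ab$ to be a filtered colimit of pp-pairs (a flat object in the functor category), and it is precisely the preservation of products that upgrades this to a coherent, i.e.\ finitely presented, functor and hence to a genuine pp-pair; making this step precise is where the locally coherent Grothendieck category machinery underlying Prest's and Krause's work is needed. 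Once Stage 1 is in hand, Stage 2 and the verification of the ring-action axioms are formal.
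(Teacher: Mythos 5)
The paper does not actually prove this statement: it is quoted from Prest \cite[25.3]{DefAddCats} and Krause \cite[7.2]{ExDefCats}, and the author explicitly notes that only the forward direction --- which follows immediately from the definition of an interpretation functor --- is used later. So there is no in-paper argument to compare yours against; I can only measure your sketch against the cited proofs.

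Your forward direction is correct and standard: solution sets of pp formulae commute with products and with direct limits, and this passes to the pp-pair quotient and to the pp-definable action. For the converse you have correctly reproduced the architecture of the Prest--Krause argument: reduce to the abelian-group-valued functor, identify it with a pp-pair via the functor-category correspondence, then recover the $S$-action from natural endomorphisms, which are themselves pp-definable. The last two stages are indeed formal. But the pivotal claim --- that an additive functor $\mcal{C}\rightarrow\Ab$ commuting with direct limits and products must arise from a pp-pair (equivalently, from a coherent/finitely presented functor) --- is asserted, with a gesture at ``locally coherent Grothendieck category machinery,'' rather than proved. That claim \emph{is} the theorem; as written your proposal is an accurate roadmap of where the content lives, not a proof of it. Two smaller corrections: (i) covariant finiteness of $\mcal{C}\cap\mod\text{-}R$ plays no role in the general statement --- for an arbitrary definable $\mcal{C}$ one localises $(\mod\text{-}R,\Ab)^{fp}$ at the Serre subcategory of functors vanishing on $\mcal{C}$, which is abelian with no finiteness hypothesis on $\mcal{C}$; (ii) your slogan ``direct limits give flatness, products upgrade to coherence'' is the right intuition, but the actual step (that a product- and direct-limit-preserving functor is finitely presented as an object of the relevant functor category) requires a genuine argument, not just the observation that such an upgrade would suffice.
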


Note that we will technically only use the forward direction of the previous theorem which follows easily from the definition of an interpretation functor.

Interpretation functors preserve pure-embeddings and pure-injectivity. Using the definition of an interpretation functor, solution sets of  pp formulae in $IN$ for $N\in\mcal{C}$ can be translated into solution sets of pp formulae in $N$. From this it will follow that pure-embeddings are preserved. It can be seen that interpretation functors preserve algebraic compactness, and hence pure-injectivity, by translating systems of cosets of solution sets of pp formulae for $IN$  into systems of cosets of solution sets of pp formulae for $N$ via $I$.

\begin{theorem}\label{fullonfpimpliesfullonpureinjectives}
Let $R,S$ be rings, $\mcal{X}\subseteq \mod\text{-}R$ be covariantly finite, $\mcal{C}$ the smallest definable subcategory containing $\mcal{X}$ and $\mcal{D}$ a definable subcategory of $\Mod\text{-}S$. If  $I:\mcal{C}\rightarrow \mcal{D}$ is an interpretation functor and $I$ is full on $\mcal{X}$ and sends modules in $\mod\text{-}R$ to modules in $\mod\text{-}S$ then $I$ is full on pure-injectives.
\end{theorem}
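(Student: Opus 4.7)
The plan is to realise the desired lift $f:N\to N'$ as a witness to a carefully chosen pp-type over $N'$, reducing the finite-satisfiability check to the hypothesised fullness of $I$ on $\mcal{X}$ via free realisations and $\mcal{X}$-approximations.

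Covariant finiteness of $\mcal{X}$ combined with the cited theorem gives $\mcal{C}=\overrightarrow{\mcal{X}}$. As a preliminary step I would upgrade fullness on $\mcal{X}$ to what I will call \emph{extended fullness}: for every $X\in\mcal{X}$ and every $M\in\mcal{C}$ the natural map $\Hom_R(X,M)\to\Hom_S(IX,IM)$ is surjective. Writing $M=\varinjlim X_i$ with $X_i\in\mcal{X}$ and using that $X\in\mod\text{-}R$ and $IX\in\mod\text{-}S$ are finitely presented, together with $I$ preserving direct limits, both Hom-groups distribute over the colimit, and the hypothesis makes each map in the directed system surjective.

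Now let $N,N'\in\mcal{C}$ be pure-injective and $g:IN\to IN'$ an $S$-linear map; let $\phi_I/\psi_I$ be an underlying pp-pair of $I$, so that elements of $IM$ are cosets $[\mathbf{m}]$ with $\mathbf{m}\in\phi_I(M)$. Consider the pp-type $p=p(\{x_a\}_{a\in N})$ with parameters from $N'$ consisting of (i) $\phi(x_{\mathbf{a}})$ for every finite tuple $\mathbf{a}$ in $N$ and every $\phi\in\pp_R$ with $N\models\phi(\mathbf{a})$, and (ii) $\psi_I(x_{\mathbf{m}}-\mathbf{m}')$ for every $\mathbf{m}\in\phi_I(N)$ together with a fixed representative $\mathbf{m}'\in\phi_I(N')$ of $g([\mathbf{m}])\in IN'$. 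Any realisation $(b_a)_{a\in N}$ of $p$ in $N'$ defines $f(a):=b_a$; clause (i) applied to the atomic pp-formulas for addition and scalar multiplication makes $f$ an $R$-homomorphism, and clause (ii) forces $If=g$. By algebraic compactness of $N'$ it suffices to verify finite satisfiability of $p$ in $N'$.

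A finite fragment of $p$ can be packaged, after enlarging the tuple if necessary, as a single tuple $\mathbf{a}$ in $N$, a single pp formula $\phi$ with $N\models\phi(\mathbf{a})$ (strengthened to imply $\phi_I$ on each relevant subtuple position), and finitely many pairs $(\mathbf{m}_j,\mathbf{m}_j')$. Take a free realisation $(F,\mathbf{c})$ of $\phi$ and $h:F\to N$ with $h(\mathbf{c})=\mathbf{a}$. Since $F\in\mod\text{-}R$ and $N\in\overrightarrow{\mcal{X}}$, $h$ factors through some $X_i\in\mcal{X}$, and that factor in turn factors through the $\mcal{X}$-approximation $f_F:F\to F_\mcal{X}$, producing $h_\mcal{X}:F_\mcal{X}\to N$ with $h_\mcal{X}\circ f_F=h$; writing $\mathbf{c}_\mcal{X}:=f_F(\mathbf{c})$ gives $F_\mcal{X}\models\phi(\mathbf{c}_\mcal{X})$ and $h_\mcal{X}(\mathbf{c}_\mcal{X})=\mathbf{a}$. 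Applying extended fullness to $g\circ Ih_\mcal{X}:IF_\mcal{X}\to IN'$ produces $k:F_\mcal{X}\to N'$ with $Ik=g\circ Ih_\mcal{X}$, and $\mathbf{b}:=k(\mathbf{c}_\mcal{X})$ then satisfies $\phi$ and, on each subtuple, matches $\mathbf{m}_j'$ modulo $\psi_I$, realising the fragment in $N'$. The main obstacle is recognising that the desired lift is equivalent to realising this pp-type, and that the free-realisation plus $\mcal{X}$-approximation collapses every finite fragment into a single application of extended fullness; once that is in place, algebraic compactness of $N'$ closes the argument.
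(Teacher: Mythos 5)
Your proposal is correct, and although it shares the paper's overall strategy---realise the desired lift as a solution to a system of cosets of pp-definable subgroups in the pure-injective target, with finite satisfiability ultimately supplied by fullness on $\mcal{X}$---the mechanics are genuinely different. Your ``extended fullness'' step is exactly the paper's Lemma \ref{rightsidefull} (surjectivity of $\Hom_R(L,M)\rightarrow\Hom_S(IL,IM)$ for $L\in\mcal{X}$, $M\in\mcal{C}$), proved by the same colimit argument, and both proofs invoke the many-variable form of algebraic compactness stated at the start of Section \ref{Fullonpi}. From there the paper presents the source as a directed colimit of objects $L_i\in\mcal{X}$ and solves for the images of chosen generating tuples of the $L_i$, subject to compatibility with the transition maps and to $Ih_i=g\circ If_i$; to make the latter condition pp-definable it must first present $I$, as an $\Ab$-valued functor, as the cokernel of $\Hom_R(\delta,-)$ for a morphism $\delta$ between objects of $\mcal{X}$ (a pushout plus two left $\mcal{X}$-approximations) and then express ``$I\epsilon=0$'' as a pp condition on $\epsilon(\mathbf{c}_i)$ (Lemma \ref{morphkerint}). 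You avoid both of those lemmas by indexing the unknowns by the elements of the source and encoding ``$If=g$'' directly through the underlying pp-pair $\phi_I/\psi_I$ (which also makes the compatibility conditions automatic, since additivity and $R$-linearity are themselves pp conditions); the price is paid in the finite-satisfiability check, where you need free realisations and a left $\mcal{X}$-approximation to collapse a fragment into a single application of extended fullness, whereas the paper only needs directedness of the index set. Your route is shorter and more model-theoretic; the paper's yields as a by-product the presentation of $I$ by a morphism of $\mcal{X}$, which is of independent interest. Note finally that, exactly as in the paper, you only ever use pure-injectivity of the target, so your argument in fact establishes the stronger statement in which the source is an arbitrary object of $\mcal{C}$.
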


Before starting the proof of the theorem, we briefly discuss its hypotheses. If $\mcal{X}\subseteq \mod\text{-}R$ is covariantly finite and $\mcal{C}$ is the smallest definable subcategory containing it then $\mcal{C}$ is locally finitely presented and the finitely presented objects of $\mcal{C}$ are exactly those in $\mcal{X}$ (see \cite[4.1]{locfpaddcat}). So, we are in the situation of the more general theorem \cite[3.12]{Sam}. Hence we have the following.

\begin{remark}
If $I$ in \ref{fullonfpimpliesfullonpureinjectives} is faithful when restricted to $\mcal{X}$, as well as full, then $I$ is full (and faithful) on $\mcal{C}$.
\end{remark}

As remarked in the introduction, the above statement does not hold when $I$ restricted to $\mod\text{-}R$ is not faithful as the next example shows.

\begin{example}
Let $R$ be a discrete valuation domain with maximal ideal generated by $\pi$ which is not complete and let $\widehat{R}$ be its completion. Let $\mcal{X}$ be the category of torsion-free finitely presented $R$-modules. Then $\mcal{X}$ is covariantly finite in $\mod\text{-}R$ and the category $\Tf_R$ of torsion-free $R$-modules is the smallest definable subcategory containing $\mcal{X}$.  Let $I:\Tf_R\rightarrow \Mod\text{-}R/\pi R$ be defined by sending $M\in\Tf_R$ to $M/M\pi$ i.e. $I$ is the interpretation functor with underlying pp-pair $\nf{x=x}{\exists y x=y\pi}$. All non-zero modules in $\mcal{X}$ are isomorphic to $R^n$ for some $n\in\N$, in particular they are all projective $R$-modules. From this it follows that $I$ is full on $\mcal{X}$.

Since $\widehat{R}$ is indecomposable as an $R$-module, $R$ is not a direct summand of $\widehat{R}$. All submodules of $R$ as a module over itself are isomorphic to $R$. Thus, every non-zero homomorphism $f:\widehat{R}\rightarrow R$ factors through a surjective homomorphism $g:\widehat{R}\rightarrow R$. Since $R$ is projective, $g$ is split. This contradicts the fact that $\widehat{R}$ is indecomposable. Therefore $\Hom_R(\widehat{R},R)=0$. On the other hand, $I\widehat{R}\cong IR$, so $\Hom_{R/R\pi}(I\widehat{R},IR)\neq 0$. Therefore $I$ is not full.
\end{example}

In general interpretation functors do not send finitely presented modules to finitely presented modules. There are however a few useful situations when they do.
\begin{remark}\label{noethklin}
Let $k$ be a commutative noetherian ring and let $R,S$ be $k$-algebras which are finitely generated as $k$-modules. If an interpretation functor $I:\mcal{C}\rightarrow\mcal{D}$ is $k$-linear then it sends finitely presented $R$-modules to finitely presented $S$-modules.
\end{remark}
\begin{proof}
Our hypothesis on $R$ and $S$ means that an $R$-module (respectively $S$-module) is finitely presented as an $R$-module (respectively $S$-module) if and only if it is finitely presented as a $k$-module. Let $\phi/\psi$ be the underlying pp-pair of $I$. Suppose $M$ is a finitely presented $R$-module. Then $\phi(M)$ and $\psi(M)$ are $k$-submodules of $M^n$ with $k$-module structure inherited from $M$. Thus, since $k$ is noetherian, they are finitely presented $k$-submodules of $M^n$. Hence $\phi(M)/\psi(M)$ is a finitely presented $k$-module with structure of a $k$-module inherited from $M$. Therefore, it is enough to note that since $I$ is $k$-linear, the $k$-module structure on $IM$ is the same as that on $\phi(M)/\psi(M)$ inherited from $M$.
\end{proof}

\begin{lemma}\label{rightsidefull}
Let $I$, $\mcal{X}$ and $\mcal{C}$ be as in \ref{fullonfpimpliesfullonpureinjectives}. For all $L\in \mcal{X}$ and $N\in \mcal{C}$, the map \[\Hom_R(L,N)\rightarrow \Hom_S(IL,IN) \text{\ \ defined by \ \ } f\mapsto If\] is surjective.
\end{lemma}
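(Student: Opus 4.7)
The plan is to exploit that $\mcal{C}$ is generated by $\mcal{X}$ under direct limits, together with the hypothesis that $IL$ is a finitely presented $S$-module, to reduce everything to the fullness hypothesis on $\mcal{X}$.

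First I would use the theorem quoted before \ref{fullonfpimpliesfullonpureinjectives}: since $\mcal{X}$ is covariantly finite, $\overrightarrow{\mcal{X}}$ is definable, and because it contains $\mcal{X}$ and is the smallest such, we have $\mcal{C}=\overrightarrow{\mcal{X}}$. So given $N\in\mcal{C}$, write $N=\varinjlim_{i\in\mcal{I}} N_i$ with $N_i\in\mcal{X}$ and transition maps $\alpha_{ij}:N_i\to N_j$, $\alpha_i:N_i\to N$. Because $I$ is an interpretation functor it commutes with direct limits, so $IN=\varinjlim_i IN_i$ in $\mcal{D}$. Since $\mcal{D}$ is definable in $\Mod\text{-}S$ it is closed under direct limits, so this colimit coincides with the one computed in $\Mod\text{-}S$.

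Next, the hypothesis that $I$ sends modules in $\mod\text{-}R$ to modules in $\mod\text{-}S$ gives $IL\in\mod\text{-}S$; hence $\Hom_S(IL,-)$ commutes with filtered colimits in $\Mod\text{-}S$. So given $g\in\Hom_S(IL,IN)$, there exist $i\in\mcal{I}$ and $g':IL\to IN_i$ such that $g=(I\alpha_i)\circ g'$. Now both $L$ and $N_i$ lie in $\mcal{X}$, so the fullness of $I$ on $\mcal{X}$ produces $f'\in\Hom_R(L,N_i)$ with $If'=g'$. Setting $f:=\alpha_i\circ f':L\to N$, functoriality of $I$ yields
\[If=(I\alpha_i)\circ(If')=(I\alpha_i)\circ g'=g,\]
which proves surjectivity.

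There is no real obstacle here once one has identified $\mcal{C}=\overrightarrow{\mcal{X}}$; the only point deserving care is the interchange of $\Hom_S(IL,-)$ with the colimit, which is precisely where the hypothesis that $I$ preserves finite presentation is used. This lemma then sets up the right-hand argument of the tuple in the proof of \ref{fullonfpimpliesfullonpureinjectives}, where an analogous (but harder) statement with $L$ varying in $\mcal{C}$ and $N$ pure-injective will be attacked via algebraic compactness.
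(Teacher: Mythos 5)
Your proof is correct and is essentially the paper's argument: both rest on $\mcal{C}=\overrightarrow{\mcal{X}}$, the commutation of $I$ with direct limits, the fact that $IL$ is finitely presented so $\Hom_S(IL,-)$ commutes with directed colimits, and fullness on $\mcal{X}$. The paper merely packages the final step functorially (the natural transformation $f\mapsto If$ is epi on $\mcal{X}$ and direct limits are exact), whereas you factor a given $g:IL\to IN$ through a finite stage and lift it there — the same computation, done elementwise.
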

\begin{proof}
Let $\eta:\Hom_R(L,-)\rightarrow \Hom_S(IL,I(-))$ be the natural transformation in $(\mcal{C},\Ab)$ with component maps $\eta_M(f)=If$ for $f\in \Hom_R(L,M)$. Since $IL$ is finitely presented and $I$ commutes with direct limits, $\Hom_R(IL,I(-))$ commutes with direct limits. Since $I$ is full on $\mcal{X}$, $\eta_M$ is an epimorphism for all $M\in \mcal{X}$. Direct limits are exact (i.e. a direct limit of exact sequences is an exact sequence) and all modules in $\mcal{C}$ are a direct limit of modules in $\mcal{X}$. Therefore $\eta_N$ is surjective for all $N\in\mcal{C}$ as required.
\end{proof}

If $\mathbf{m}$ is an $n$-tuple of elements from a module $M$ then the \textbf{pp-type} of $\mathbf{m}$ is the set of pp-$n$-formulas $\phi$ such that $\mathbf{m}\in\phi(M)$. If $M\in \mod\text{-}R$ and $\mathbf{m}$ is an $n$-tuple of element from $M$ then, \cite[1.2.6]{PSL}, there exists $\phi\in\pp_R^n$ such that which generates the pp-type of $\mathbf{m}$ as a filter in $\pp_R^n$, i.e.  $\psi$ is in the pp-type of $\mathbf{m}$ if and only if $\psi\geq \phi$. In this situation, for any $L\in \Mod$-$R$, $\mathbf{l}\in\phi(L)$ if and only if there exists a homomorphism $f:M\rightarrow L$ with $f(\mathbf{m})=\mathbf{l}$.

\begin{lemma}\label{morphkerint}
Let $A,B\in\mod\text{-}R$ and $\delta:A\rightarrow B$. Let $L\in\mod\text{-}R$, let $\mathbf{c}\in L$ be an $n$-tuple which generates $L$ and let $N\in\Mod\text{-}R$. For each $\alpha\in\Hom(A,L)$, there exists $\chi^\alpha(\mathbf{x})\in\pp_R^n$ such that
\[\chi^\alpha(N)=\{\epsilon(\mathbf{c})\in N \st  \epsilon\in \Hom(L,N) \text{ and there exists }\beta\in \Hom(B,N) \text{ such that }\epsilon\circ \alpha=\beta\circ \delta  \}\]
\end{lemma}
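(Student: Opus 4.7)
The plan is to encode the factoring condition ``$\epsilon\circ\alpha = \beta\circ\delta$ for some $\beta$'' as a pp-condition on the tuple $\epsilon(\mathbf{c})$, using the correspondence between morphisms out of finitely presented modules and realisations of pp-types recalled in the paragraph immediately preceding the lemma. Since $A$, $B$ and $L$ are all finitely presented, I fix a finite generating tuple $\mathbf{a}$ of $A$ of length $k$, let $\theta_B(\mathbf{y})$ be the pp-$k$-formula generating the pp-type of $\delta(\mathbf{a})$ in $B$, and let $\phi_L(\mathbf{x})$ be the pp-$n$-formula generating the pp-type of $\mathbf{c}$ in $L$. Because $\mathbf{c}$ generates $L$, there is an $n\times k$ matrix $C$ over $R$ with $\alpha(\mathbf{a})=\mathbf{c}C$. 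I would then set
$$\chi^\alpha(\mathbf{x}):=\phi_L(\mathbf{x})\wedge\theta_B(\mathbf{x}C),$$
which is again a pp-$n$-formula.

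For the inclusion $\supseteq$, given $\epsilon:L\to N$ and $\beta:B\to N$ with $\epsilon\circ\alpha=\beta\circ\delta$, set $\mathbf{n}:=\epsilon(\mathbf{c})$. Then $\mathbf{n}\in\phi_L(N)$ because pp-formulae are preserved by homomorphisms, and $\mathbf{n}C=\epsilon(\mathbf{c}C)=\epsilon(\alpha(\mathbf{a}))=\beta(\delta(\mathbf{a}))\in\theta_B(N)$ for the same reason. For the inclusion $\subseteq$, suppose $\mathbf{n}\in\chi^\alpha(N)$. The free realisation property of $\phi_L$ applied to $\mathbf{n}\in\phi_L(N)$ yields $\epsilon:L\to N$ with $\epsilon(\mathbf{c})=\mathbf{n}$, and the free realisation property of $\theta_B$ applied to $\mathbf{n}C\in\theta_B(N)$ yields $\beta:B\to N$ with $\beta(\delta(\mathbf{a}))=\mathbf{n}C=\epsilon(\alpha(\mathbf{a}))$. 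Hence $\beta\circ\delta$ and $\epsilon\circ\alpha$ agree on the generating tuple $\mathbf{a}$, so they agree as homomorphisms $A\to N$, which is exactly the factoring condition.

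The proof is essentially bookkeeping around the standard correspondence between pp-types and morphisms out of finitely presented modules, so I do not expect any serious obstacle. The two minor points to watch are that (i) the matrix $C$ need not be unique, since $L$ need not be free on $\mathbf{c}$, but any fixed choice suffices because only the equality $\alpha(\mathbf{a})=\mathbf{c}C$ is used in both directions; and (ii) the conjunct $\phi_L(\mathbf{x})$ cannot be omitted, since without it $\mathbf{n}\in\chi^\alpha(N)$ would not by itself guarantee that $\mathbf{n}$ is of the form $\epsilon(\mathbf{c})$ for some $\epsilon:L\to N$, which is part of the description of the set on the right-hand side.
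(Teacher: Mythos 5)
Your proof is correct and takes essentially the same approach as the paper. The only cosmetic difference is that you use the pp-type $\theta_B$ of the (not necessarily generating) tuple $\delta(\mathbf{a})$ in $B$ directly, whereas the paper fixes a generating tuple $\mathbf{b}$ of $B$, uses the pp-type of $\mathbf{b}$, and then encodes $\delta(\mathbf{a})$ in terms of $\mathbf{b}$ via an existential quantifier and linear equations; unwinding the latter recovers precisely your $\theta_B(\mathbf{x}C)$, so the two formulas are equivalent.
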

\begin{proof}
Let $\mathbf{a}:=(a_1,\ldots,a_m)$ generate $A$ and let $\mathbf{b}$ generate $B$. Let $\sigma$ generate the pp-type of $\mathbf{c}$ and $\tau$ generate the pp-type of $\mathbf{b}$. For $1\leq j\leq m$, let $\mathbf{r}_j$ and $\mathbf{s}_j$ be tuples of elements from $R$, of appropriate lengths, such that $\delta(a_j)=\mathbf{b}\cdot \mathbf{r}_j$ and $\alpha(a_j)=\mathbf{c}\cdot \mathbf{s}_j$. Let $\chi^{\alpha}(\mathbf{x})$ be the pp formula
\[\exists y \ \tau(\mathbf{y})\wedge\sigma(\mathbf{x})\wedge \bigwedge_{j=1}^m \mathbf{x}\cdot \mathbf{s}_j=\mathbf{y}\cdot \mathbf{r}_j.\]

Suppose $\mathbf{m}\in\chi^\alpha(N)$. Since $\mathbf{m}\in \sigma(N)$, there exists $\epsilon: L\rightarrow N$ such that $\mathbf{m}=\epsilon(\mathbf{c})$. Let $\mathbf{m}'\in N$ be such that $\mathbf{m}'\in\tau(N)$ and $\mathbf{m}\cdot \mathbf{s}_j=\mathbf{m}'\cdot\mathbf{r}_j$. Since $\mathbf{m}'\in \tau(N)$, there exists $\beta:B\rightarrow N$ such that $\beta(\mathbf{b})=\mathbf{m}'$. Then, for each $1\leq j\leq m$,
\[\epsilon(\alpha(a_j))=\epsilon(\mathbf{c}\cdot\mathbf{s}_j)=\epsilon(\mathbf{c})\cdot\mathbf{s}_j=\mathbf{m}\cdot \mathbf{s}_j=\mathbf{m}'\cdot\mathbf{r}_j=\beta(\mathbf{b})\cdot\mathbf{r}_j=\beta(\mathbf{b}\cdot\mathbf{r}_j)=\beta(\delta(a_j)).\] So, since $\mathbf{a}$ generates $A$, $\epsilon\circ \alpha=\beta\circ \delta$.

Now suppose that $\epsilon\in \Hom(L,N)$ and $\beta\in \Hom(B,N)$ are such that $\epsilon\circ \alpha=\beta\circ \delta$. Then $\epsilon (\mathbf{c})\in\sigma(N)$ and $\beta(\mathbf{b})\in\tau(N)$. Then
\[\epsilon(\mathbf{c})\cdot\mathbf{s}_j=\epsilon(\mathbf{c}\cdot\mathbf{s}_j)=\epsilon(\alpha(a_j))=\beta(\delta(a_j))=\beta(\mathbf{b}\cdot\mathbf{r}_j)=\beta(\mathbf{b})\cdot \mathbf{r}_j.\qedhere\]
\end{proof}

\begin{lemma}
Let $I$, $\mcal{X}$ and $\mcal{C}$ be as in \ref{fullonfpimpliesfullonpureinjectives}. There exist $A,B\in\mcal{X}$ and $\delta:A\rightarrow B$ such that $I$ composed with the forgetful functor to $\Ab$ is the cokernel of the natural transformation $\Hom_R(\delta,-)$ in $(\mcal{C},\Ab)$.
\end{lemma}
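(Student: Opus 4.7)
The plan is to build $\delta:A\to B$ directly from the underlying pp-pair of $I$, combining $\mcal{X}$-valued free realizations of its top and bottom with a pushout in $\mod\text{-}R$ and a left $\mcal{X}$-approximation.

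Let $\phi/\psi$ be an underlying pp-pair for $I$ in $n$ variables. My first step is to produce free realizations $(A,\mathbf{a})$ of $\phi$ and $(C,\mathbf{c})$ of $\psi$ with $A,C\in\mcal{X}$, in the sense that $\phi(M)=\{f(\mathbf{a})\mid f\in\Hom_R(A,M)\}$ and $\psi(M)=\{f(\mathbf{c})\mid f\in\Hom_R(C,M)\}$ for every $M\in\mcal{C}$. Starting from standard free realizations $(A_0,\mathbf{a}_0)$ and $(C_0,\mathbf{c}_0)$ in $\mod\text{-}R$, I will apply left $\mcal{X}$-approximations $f_{A_0}:A_0\to A$ and $f_{C_0}:C_0\to C$ and set $\mathbf{a}:=f_{A_0}(\mathbf{a}_0)$, $\mathbf{c}:=f_{C_0}(\mathbf{c}_0)$. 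The key observation used throughout is that any morphism from a finitely presented $R$-module into an $M\in\mcal{C}=\overrightarrow{\mcal{X}}$ factors through some term of a direct-limit presentation of $M$ and then through the relevant left $\mcal{X}$-approximation; applied to $(A_0,\mathbf{a}_0)$ and $(C_0,\mathbf{c}_0)$ this gives the claimed free-realization property of $(A,\mathbf{a})$ and $(C,\mathbf{c})$ on all of $\mcal{C}$.

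Next I will form the pushout $D_0$ in $\mod\text{-}R$ of $R^n\to A$, $\mathbf{e}\mapsto\mathbf{a}$, and $R^n\to C$, $\mathbf{e}\mapsto\mathbf{c}$; then $D_0$ is finitely presented and admits a left $\mcal{X}$-approximation $f_{D_0}:D_0\to B$ with $B\in\mcal{X}$. I define $\delta:A\to B$ as the composite $A\to D_0\xrightarrow{f_{D_0}}B$. To verify that the forgetful of $I$ equals $\coker\Hom_R(\delta,-)$ in $(\mcal{C},\Ab)$, I will analyse, for each $M\in\mcal{C}$, the natural surjection
\[\eta_M:\Hom_R(A,M)\to\phi(M)/\psi(M),\qquad f\mapsto f(\mathbf{a})+\psi(M),\]
which is surjective by the free-realization property of $(A,\mathbf{a})$, and show that $\ker\eta_M=\im\Hom_R(\delta,M)$. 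The inclusion $\im\Hom_R(\delta,M)\subseteq\ker\eta_M$ is immediate because the pushout identifies $\mathbf{a}$ and $\mathbf{c}$ in $D_0$, so $\delta(\mathbf{a})$ equals the image of $\mathbf{c}$ in $B$ and therefore lies in $\psi(B)$. For the reverse inclusion, given $f$ with $f(\mathbf{a})\in\psi(M)$, the free-realization property of $(C,\mathbf{c})$ yields $h:C\to M$ with $h(\mathbf{c})=f(\mathbf{a})$; by the universal property of the pushout the pair $(f,h)$ extends to $\tilde f:D_0\to M$; and applying the two-step factorization of the previous paragraph to the finitely presented module $D_0$ produces $g:B\to M$ with $\tilde f=g\circ f_{D_0}$, whence $f=g\circ\delta$.

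The main technical obstacle is precisely this two-step factorization, invoked twice above. The left $\mcal{X}$-approximation property by itself only provides factorizations for morphisms targeting $\mcal{X}$; to transport both the free-realization statement and the injectivity check to targets in $\mcal{C}$, one has to supplement it with the finite presentability of the domain (the standard free realization, and then the pushout $D_0$) and the direct-limit presentation of the target. Once this is in hand, the remaining naturality in $M$ for $\eta_M$ is formal.
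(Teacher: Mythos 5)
Your proof is correct and follows essentially the same route as the paper's: the paper starts from the standard presentation $\Hom_R(D,-)\to\Hom_R(C,-)\to I\to 0$ attached to the pp-pair (which is built from exactly the free realizations you construct by hand), applies a left $\mcal{X}$-approximation, forms a pushout, approximates the pushout, and verifies surjectivity and injectivity of the induced comparison map just as you do. The only cosmetic difference is that you approximate both free realizations before taking the pushout over $R^n$, whereas the paper pushes out $\epsilon\colon C\to D$ along the approximation of $C$; your explicit treatment of the factorization of maps from finitely presented modules into objects of $\mcal{C}=\overrightarrow{\mcal{X}}$ is the same point the paper uses (more tersely) when asserting $\Hom(\pi_C,M)$ is surjective for all $M\in\mcal{C}$.
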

\begin{proof}
Abusing notation, we will write $I$ for $I$ composed with the forgetful functor from $\mcal{D}$ to $\Ab$. The functor $I:\mcal{D}\rightarrow \Ab$ is defined by a pp-pair, the same pp-pair defines a functor $\Mod\text{-}R\rightarrow \Ab$. Thus there exists $C,D\in \mod\text{-}R$ and $\epsilon:C\rightarrow D$ such that $I$ is the cokernel of $\Hom_R(\epsilon,-)$ (see, for instance, \cite[\S 2.1 Lemma 1]{InfdimCB}). Let $\pi_C:C\rightarrow A$ be the left $\mcal{X}$-approximation of $C$. Let
\[\xymatrix@=20pt{
  D  \ar[r]^{\lambda} & P  \\
  C \ar[r]^{\pi_C}\ar[u]^{\epsilon} & A\ar[u]_{\lambda'}   }\]
be the pushout of $\pi_C:C\rightarrow A$ and $\epsilon: C\rightarrow D$. Let $\pi_P:B\rightarrow P$ be a left $\mcal{X}$-approximation of $P$ and let $\delta=\pi_P\circ \lambda'$.

We will now show that the natural transformation $\eta$ induced by the following diagram is an isomorphism.
\[\xymatrix@R=30pt@C=60pt{
  \Hom(D,-)  \ar[r]^{\Hom(\epsilon,-)} & \Hom(C,-)  \ar[r] & I  \ar[r] & 0  \\
  \Hom(B,-) \ar[r]^{\Hom(\delta,-)}\ar[u]_{\Hom(\lambda\circ\pi_P,-)} & \Hom(A,-)\ar[r]\ar[u]_{\Hom(\pi_C,-)} &  \coker\Hom(\delta,-)\ar[r]\ar@{-->}[u]_{\eta} & 0  }\]

By definition of left approximation, for all $M\in\mcal{C}$, $\Hom(\pi_C,M)$ is surjective. Therefore $\eta_M$ is surjective for all $M\in\mcal{C}$.
We now show that $\eta_M$ is injective for all $M\in\mcal{C}$. Take $M\in\mcal{C}$ and $g:A\rightarrow M$. Suppose that $g\circ \pi_C\in \im\Hom(\epsilon, M)$. By the defining property of pushouts, there exists $h:P\rightarrow M$ such that $h\circ\lambda'=g$. Since $\pi_P$ is a left $\mcal{X}$-approximation of $P$, there exists $h':B\rightarrow M$ such that $h=h'\circ \pi_B$. Therefore $h'\circ \delta=h'\circ \pi_B\circ\lambda'=g$ as required.
%
%
%
%
%
%
%
%
%
%
%
%
%
%
%
%
%
%
%
%
%
%
\end{proof}

\begin{proof}[Proof of theorem \ref{fullonfpimpliesfullonpureinjectives}]
Suppose that $N\in\mcal{C}$ is pure-injective, $N'\in\mcal{C}$ and $g\in \Hom(IN',IN)$. We aim to show that there exists $h\in \Hom_R(N',N)$ such that $Ih=g$.

Since $\mcal{X}$ is covariantly finite and $\mcal{C}$ is the smallest definable subcategory containing $\mcal{X}$, all modules in $\mcal{C}$ are direct limits of modules in $\mcal{X}$. Fix $J$ a directed partially ordered set, $L_i\in \mcal{X}$ and for each $i\leq j\in J$, $\sigma_{ij}:L_i\rightarrow L_j$ such that $N'$ together with the canonical maps $f_i:L_i\rightarrow N'$ is the direct limit of this directed system.

We will show below that there exists a set of homomorphisms $h_i:L_i\rightarrow N$ indexed by $J$ such that for all $i\leq j\in J$, $h_j\sigma_{ij}=h_i$ and $Ih_i=g\circ If_i$.  Since $h_j\sigma_{ij}=h_i$ for all $i\leq j$, there exists a homomorphism $h:N'\rightarrow N$ such that $hf_i=h_i$ for all $i\in I$. Thus $IhIf_i=Ihf_i=g\circ If_i$ for all $i\in J$. Since $I$ commutes with direct limits, this implies that $Ih=g$. So $h$ is the required homomorphism.

For each $i\in J$, let $\mathbf{c}_i$ generate $L_i$ and let
\[H_i:=\{\epsilon:L_i\rightarrow N \st g\circ If_i=I\epsilon\}\subseteq \Hom(L_i,N).\]

By \ref{rightsidefull}, each $H_i$ is non-empty. For each $i\in J$, pick $\epsilon_i\in H_i$. Note that $H_i:= \epsilon_i+\{\epsilon\in \Hom_R(L_i,N) \st I\epsilon=0\}$.

Since $I$ is an interpretation functor, there exists $A,B\in \mod\text{-}R$ and $\delta:A\rightarrow B$ such that, as a functor from $\mcal{C}$ to $\Ab$ (i.e. after composing with the forgetful functor), $I$ has a presentation
\[\xymatrix@C=0.8cm{
   \Hom_R(B,-) \ar[rr]^{\Hom_R(\delta,-)} && \Hom_R(A,-) \ar[rr] && I \ar[r] & 0 }.\]

Since $\mcal{X}\subseteq\mod\text{-}R$ is covariantly finite, we may assume that $A,B\in \mcal{X}$. Now $I\epsilon=0$ if and only if for all $\alpha\in \Hom(A,L)$ there exists $\beta\in \Hom(B,N)$ such that $\epsilon\circ \alpha=\beta\circ\delta$. For each $i\in J$, let $\chi_i^\alpha$ be $\chi^\alpha$ from \ref{morphkerint}, where $L:=L_i$ and $\mathbf{c}:=\mathbf{c}_i$. Thus, $I\epsilon=0$ if and only if $\epsilon(\mathbf{c}_i)\in\chi^\alpha(N)$ for all $\alpha\in \Hom(A,L_i)$.

Now for all $i\leq j\in J$, let $\mathbf{t}_{ij}\in R^{l_j}$ be such that $\sigma_{ij}(\mathbf{c}_{i})=\mathbf{c}_{j}\cdot \mathbf{t}_{ij}$.

Consider the system of linear equations and cosets of pp-definable subsets
\[\mathbf{x}_i\in \epsilon_i(\mathbf{c}_i)+\chi^{\alpha}_{i}(N)\tag*{$(1)_i^\alpha$}\]
for $\alpha\in \Hom_R(M,L_i)$ and
\[\mathbf{x}_i=\mathbf{x}_j\mathbf{t}_{ij}\tag*{$(2)_{ij}$}.\]

Suppose $(\mathbf{m}_i)_{i\in J}$ is a solution to this system in $N$. Since $\mathbf{m}_i-\epsilon_i(\mathbf{c}_i)\in\chi^\alpha_i(N)$, there is a (unique) homomorphism $h_i':L_i\rightarrow N$ defined by $h_i'(\mathbf{c}_{i})=\mathbf{m}_i-\epsilon_i(\mathbf{c}_i)$. By the argument above $Ih_i'=0$. Therefore $h_i:=h_i'+\epsilon_i\in H_i$. The second condition $(2)_{ij}$ implies that for all $i\leq j$, $h_j\circ\sigma_{ij}=h_i$ since
\[h_i(\mathbf{c}_i)=\mathbf{m}_i=\mathbf{m}_j\mathbf{t}_{ij}=h_j(\mathbf{c}_j)\cdot \mathbf{t}_{ij}=h_j(\mathbf{c}_j\cdot\mathbf{t}_{ij})=h_j\circ\sigma_{ij}(\mathbf{c}_i).\]

Since $N$ is pure-injective, in order to show that the system of equations has a solution, we just need to show that it is finitely solvable. Let $J_0\subseteq J$ be a finite subset of $J$. Since $J$ is directed, by adding an element to $J_0$, we may assume that there is a $k\in J_0$ such that $i\leq k$ for all $i\in J_0$.

Let $\mathbf{m}_k:=\epsilon_k(\mathbf{c}_k)$ and for each $i\in J_0$, let $\mathbf{m}_i=\mathbf{m}_k\cdot\mathbf{t}_{ik}$. By definition $\mathbf{m}_k$ is in  $\epsilon_k(\mathbf{c}_k)+\chi^{\alpha}_{k}(N)$ for all $\alpha\in \Hom_R(M,L_k)$. Now $I(\epsilon_k\circ \sigma_{ik})=I(\epsilon_k)\circ I(\sigma_{ik})=g\circ I(f_k)\circ I(\sigma_{ik})=g\circ I(f_k\sigma_{ik})=g\circ I(f_i).$ So $\epsilon_k\circ \sigma_{ik}\in H_i$. Thus $\epsilon_k\circ\sigma_{ik}(\mathbf{c}_i)=\mathbf{m}_i$ satisfies $\epsilon_i(\mathbf{c}_i)+ \chi^{\alpha}_{k}(N)$ for all $\alpha\in \Hom_R(M,L_k)$. Thus we have shown that the system is finitely satisfiable.
\end{proof}

Now that we have proved the main theorem of this section we recall some consequences of an interpretation functor being full on pure-injectives which we will use in the next section to compute dimensions on pp-lattices of B\"ackstr\"om orders and to describe the Ziegler spectra of tame B\"ackstr\"om orders.

The (right) Ziegler spectrum, $\Zg_R$, of $R$ is a topological space whose set of points is the set $\pinj_R$ of isomorphism classes of indecomposable pure-injective modules and which has basis of open sets given by
\[\left(\nf{\phi}{\psi}\right):=\{M\in\pinj_R \st \phi(M)\supsetneq\psi(M)\and\phi(M)\}\]
where $\phi,\psi\in\pp_R^1$. The open sets $\left(\nf{\phi}{\psi}\right)$, and $\Zg_R$ itself, are compact.

 There is a bijective correspondence between the definable subcategories of  $\Mod\text{-}R$ and the closed subsets of closed subsets of $\Zg_R$ given by sending a definable subcategory $\mcal{D}\subseteq \Mod\text{-}R$ to $\pinj_R\cap \mcal{D}$ and sending a closed subset $C\subseteq \Zg_R$ to $\langle C\rangle\subseteq \Mod\text{-}R$. For $\mcal{C}$ a definable subcategory of $\Mod\text{-}R$, we write $\Zg(\mcal{C})$ for the $\Zg_R\cap \mcal{C}$ equipped with the subspace topology.

Note that if $I:\mcal{C}\rightarrow \mcal{D}$ is an interpretation functor then $\ker I\cap \Zg(\mcal{C})$ is the closed subset $\Zg(\mcal{C})\backslash\left(\nf{\phi}{\psi}\right)$ where $\nf{\phi}{\psi}$ is the underlying pp-pair of $I$. The image of a definable subcategory $\mcal{C}'\subseteq \mcal{C}$ under an interpretation functor $I$ is not in general a definable subcategory. However, it follows from \cite[3.4.7 (iii)]{PSL} that the closure of $I\mcal{C}'$ under pure-submodules is a definable subcategory of $\mcal{D}$.\looseness=-1

\begin{theorem}\cite[3.19]{Intmodinmod}\cite[18.2.27]{PSL}\label{fpiZghomeo}
Let $\mcal{C},\mcal{D}$ be definable subcategories and let $I:\mcal{C}\rightarrow \mcal{D}$ be an interpretation functor. If $I$ is full on pure-injectives then the assignment $N\mapsto IN$ induces a homeomorphism from the open subset $\Zg(\mcal{C})\backslash\ker I$ of $\Zg(\mcal{C})$ and the closed subset $I\mcal{C}\cap \Zg(\mcal{D})$ of $\Zg(\mcal{D})$.
\end{theorem}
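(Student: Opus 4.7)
The plan is to exhibit $F\colon N\mapsto IN$ as a bijection from $\Zg(\mcal{C})\setminus\ker I$ onto $I\mcal{C}\cap\Zg(\mcal{D})$ and then verify that $F$ is both continuous and open onto its image. Well-definedness rests on the fact (already noted in the text) that interpretation functors preserve pure-injectivity. For $N\in\Zg(\mcal{C})\setminus\ker I$, running the argument of Lemma~\ref{rightsidefull} with $L=N$ produces a surjective ring map $\End(N)\twoheadrightarrow\End(IN)$; its kernel is a proper two-sided ideal of the local ring $\End(N)$, hence sits inside the Jacobson radical, so $\End(IN)$ is a nonzero local quotient and $IN$ is indecomposable pure-injective. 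Thus $F$ lands in $\Zg(\mcal{D})$, and by construction in $I\mcal{C}\cap\Zg(\mcal{D})$.

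For injectivity, given an isomorphism $\alpha\colon IN\xrightarrow{\sim}IM$, fullness furnishes $f\colon N\to M$ and $g\colon M\to N$ with $If=\alpha$ and $Ig=\alpha^{-1}$; then $I(gf)=\mathrm{id}_{IN}$ is a unit, so $gf$ lies outside $\ker(I|_{\End N})\subseteq\mathrm{rad}(\End N)$ and is therefore a unit of $\End(N)$. Symmetrically $fg$ is a unit of $\End(M)$, so $f$ is an isomorphism. For surjectivity onto $I\mcal{C}\cap\Zg(\mcal{D})$, any $M\in I\mcal{C}\cap\Zg(\mcal{D})$ is of the form $IL$ for some $L\in\mcal{C}$; replacing $L$ by its pure-injective hull (which remains in the definable $\mcal{C}$) makes $M$ a direct summand of the pure-injective $IL$, and by lifting the corresponding idempotent in $\End(IL)$ across the surjection $\End(L)\twoheadrightarrow\End(IL)$ one extracts an indecomposable pure-injective summand $N$ of $L$ with $IN\cong M$. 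The equality $I\mcal{C}\cap\Zg(\mcal{D})=\langle I\mcal{C}\rangle\cap\Zg(\mcal{D})$, which makes the image Ziegler-closed, is then the same statement phrased at the level of definable closures.

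Continuity is automatic from the defining pp-pair data of $I$: for each pp-pair $\alpha/\beta$ over $S$ there exist pp-pairs $\alpha^*/\beta^*$ over $R$ (obtained by substituting the formulas representing the $S$-action on $\phi/\psi$) such that $\alpha(IN)=\beta(IN)$ iff $\alpha^*(N)=\beta^*(N)$ for all $N\in\mcal{C}$; hence $F^{-1}((\alpha/\beta))=(\alpha^*/\beta^*)\cap(\Zg(\mcal{C})\setminus\ker I)$. The delicate direction, and the main obstacle of the proof, is that $F$ is open: for each pp-pair $\phi/\psi$ over $R$ one needs a pp-pair $\alpha/\beta$ over $S$ such that, for $N\in\Zg(\mcal{C})\setminus\ker I$, $N\in(\phi/\psi)$ iff $IN\in(\alpha/\beta)$. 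This is where fullness on pure-injectives is essential: it promotes the endomorphism-ring fact $\ker(I|_{\End N})\subseteq\mathrm{rad}(\End N)$ used above, uniformly to all pp-definable quotients, so that every pp-formula over $R$ applied to $N$ is matched by some pp-formula over $S$ applied to $IN$. This uniform pp-level statement is essentially the content of \cite[3.17]{Intmodinmod}; with it in hand, continuity, openness, and the identification of the image as the closed subset $I\mcal{C}\cap\Zg(\mcal{D})$ all drop out together.
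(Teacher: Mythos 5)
The theorem is cited from Prest's work ([3.19, Intmodinmod] and [18.2.27, PSL]) rather than proved in the paper, so there is no in-paper argument to compare against; I assess your attempt on its own terms. Your decomposition into well-definedness, injectivity, surjectivity, continuity and openness is sensible, and the first two steps are correct: the surjection $\End(N)\twoheadrightarrow\End(IN)$ with kernel inside the radical of the local ring $\End(N)$ does show $\End(IN)$ is local, and the unit-lifting argument for injectivity is exactly right.

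The surjectivity step has a genuine gap. After passing to the pure-injective hull $L$, $M$ is a direct summand of $IL$, and even granting idempotent lifting along $\End(L)\twoheadrightarrow\End(IL)$ (which is true, but requires the fact that endomorphism rings of pure-injective modules are exchange rings so that idempotents lift modulo arbitrary left ideals --- you invoke it without justification), the lifted idempotent $\tilde e$ only produces a direct summand $N_0=\tilde e L$ with $IN_0\cong M$. There is no reason for $N_0$ to be indecomposable: every decomposition $N_0=N_1\oplus N_2$ forces one factor into $\ker I$, but the peeling-off need not terminate, and $N_0$ could in principle be superdecomposable, with no indecomposable summand at all. What actually closes this gap in the standard proofs is a pp-type argument: one writes $M$ as the hull of an irreducible pp-type $q$ over $S$, transports $q$ across the lattice isomorphism $\pp_S^1(\mcal{D})\cong[\psi,\phi]_{\mcal{C}}$ (your Lemma~\ref{latticeiso}), and then exhibits an irreducible pp-type $p$ over $R$ restricting to the image of $q$ on the interval $[\psi,\phi]$; the hull $N=H(p)$ is then the indecomposable pure-injective sought, with $IN\cong M$. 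The existence of such an irreducible lift is the real content, and your idempotent argument silently assumes it.

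Separately, the openness claim is asserted rather than argued. The isomorphism $\pp_S^1(\mcal{D})\cong[\psi,\phi]_{\mcal{C}}$ from \cite[3.17]{Intmodinmod} only matches pp-pairs in the \emph{interval}; a general pp-pair $\sigma/\tau$ over $R$ need not lie there, and one still has to show that for indecomposable $N\notin\ker I$ the condition $\sigma(N)\neq\tau(N)$ is controlled by a pp-pair over $S$ evaluated at $IN$. Saying it ``drops out'' is precisely where the delicacy lies, so you should spell this step out (or at least locate it precisely in the cited lemma) rather than fold it into the conclusion. Likewise the identity $I\mcal{C}\cap\Zg(\mcal{D})=\langle I\mcal{C}\rangle\cap\Zg(\mcal{D})$ is part of what the surjectivity argument must establish, not something that can be asserted as a rephrasing.
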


Let $\mcal{L}$ be a non-empty class of lattices closed under sublattices and quotient lattices. The $\mcal{L}$-dimension, denoted $\mcal{L}\text{-}\dim L$, of a bounded modular lattice $L$ measures the ordinal number of times it takes to reach the one element lattice by iteratively collapsing intervals in $\mcal{L}$. The one element lattice is formally given $\mcal{L}$-dimension $-1$ and a lattice $L$ has $\mcal{L}$-dimension $0$ if and only if there exist $\top=a_{n+1}>a_{n}>\ldots>a_0=\bot$ such that $[a_i,a_{i+1}]\in\mcal{L}$ for all $0\leq i\leq n$ where $\top$ (respectively $\bot$) is the greatest (respectively least) element of $L$. 

The two most important dimensions are m-dimension and breadth. The m-dimension, respectively breadth, of a modular lattice $L$ is $\mcal{L}\text{-}\dim L$ where $\mcal{L}$ is the class of lattices of size $2$, respectively the class of total orders. If $\pp_R^1(\mcal{C})$ has m-dimension then it is equal to the Cantor-Bendixson rank of $\Zg(\mcal{C})$ and, if $R$, is countable then the two values are always equal. The m-dimension of $\pp_R^1$ is equal to the Krull-Gabriel dimension, in the sense of \cite{Geigle}, of the category $(\mod\text{-}R,\Ab)^{fp}$ of finitely presented functors from $\mod\text{-}R$,
the category of finitely presented right $R$-modules, to $\Ab$, the category of abelian group. This dimension measures the complexity of factorisation of morphisms in $\mod\text{-}R$. If $\pp_R(\mcal{C})$ has breadth then there are no superdecomposable pure-injective modules in $\mcal{C}$ and, if $R$ is countable, the converse holds.

To formally define the $\mcal{L}$-dimension (see \cite[Section 10.2]{PrestBluebook} and \cite[Section 7.1]{PSL}) of a bounded modular lattice $L$ one defines a sequence of congruence relations $\sim_\alpha$ on $L$ for each ordinal $\alpha$ which is the congruence relation induced on $L$ by iteratively collapsing intervals in $\mcal{L}$ $\alpha$ times. The $\mcal{L}$-dimension of $L$ is then the least ordinal $\alpha$ such that $L/\sim_{\alpha+1}$ is the one element lattice.

\begin{proposition}\label{bounds}
Let $\mcal{L}$ be a non-empty class of lattices closed under sublattices and quotient lattices. Let $\mcal{C}\subseteq \Mod\text{-}R$ and $\mcal{D}\subseteq \Mod\text{-}S$ be definable subcategories and let $I:\mcal{C}\rightarrow \mcal{D}$ be an interpretation functor with $\langle I\mcal{C}\rangle=\mcal{D}$. If $I$ is full on pure-injectives then
\[\sup\{\mcal{L}\text{-}\dim \pp_S^1(\mcal{D}),\mcal{L}\text{-}\dim \pp_R^1(\ker I)\}\leq \mcal{L}\text{-}\dim\pp_R^1(\mcal{C})\leq\mcal{L}\text{-}\dim\pp_S^1(\mcal{D})+1+\mcal{L}\text{-}\dim \pp_R^1(\ker I). \] So, in particular, $\mcal{L}\text{-}\dim \pp^1_R(\mcal{C})$ exists if and only if $\mcal{L}\text{-}\dim\pp_R^1\ker I$ and $\mcal{L}\text{-}\dim\pp_S^1(\mcal{D})$ exist.
\end{proposition}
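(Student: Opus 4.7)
The plan is to reduce the statement to a lattice-theoretic comparison, with the dictionary built from the underlying pp-pair of $I$. Let $\phi/\psi$ be an underlying pp-pair of $I$, so that $IN=\phi(N)/\psi(N)$ with its natural $S$-module action for each $N\in\mcal{C}$. Sending an $S$-pp formula $\theta(x)$ to the $R$-pp formula $I^{\ast}\theta$ characterised by $I^{\ast}\theta(N)/\psi(N)=\theta(IN)$ gives an order-preserving map $I^{\ast}:\pp_S^1(\mcal{D})\to[\psi,\phi]\subseteq \pp_R^1(\mcal{C})$. The hypothesis $\langle I\mcal{C}\rangle=\mcal{D}$ supplies injectivity, and the hypothesis that $I$ is full on pure-injectives, together with Theorem \ref{fpiZghomeo} and the standard translation between Ziegler-open sets and pp-pair intervals, supplies surjectivity, so $I^{\ast}$ is a lattice isomorphism onto $[\psi,\phi]$. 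Dually, restriction from $\mcal{C}$ to $\ker I$ yields a surjective lattice map $\pi:\pp_R^1(\mcal{C})\twoheadrightarrow \pp_R^1(\ker I)$, and because $\ker I$ is the definable subcategory of $\mcal{C}$ cut out by the single condition $\phi(M)=\psi(M)$, the kernel congruence $\ker\pi$ coincides with the lattice congruence on $\pp_R^1(\mcal{C})$ generated by the relation $\phi\sim\psi$.

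With this dictionary the two lower bounds are immediate: $[\psi,\phi]\cong \pp_S^1(\mcal{D})$ is a sublattice and $\pp_R^1(\ker I)$ is a quotient of $\pp_R^1(\mcal{C})$, so the required inequalities follow from closure of $\mcal{L}$ under sublattices and quotient lattices. For the upper bound, I set $L:=\pp_R^1(\mcal{C})$, $\gamma:=\mcal{L}\text{-}\dim\pp_S^1(\mcal{D})$, $\beta:=\mcal{L}\text{-}\dim\pp_R^1(\ker I)$, and run the transfinite sequence of congruences $\sim_\alpha$ from the definition of $\mcal{L}$-dim on $L$. Globally collapsing intervals in $\mcal{L}$ on $L$ restricts on $[\psi,\phi]$ to a congruence at least as strong as the corresponding internal collapse inside $[\psi,\phi]$ (since any interval in $\mcal{L}$ that sits inside $[\psi,\phi]$ is already an interval in $\mcal{L}$ inside $L$); an easy transfinite induction then shows that after $\gamma+1$ iterations the image of $[\psi,\phi]$ in $L/\sim_{\gamma+1}$ is trivial, equivalently $\psi\sim_{\gamma+1}\phi$. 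Combined with the description of $\ker\pi$, this forces $\sim_{\gamma+1}\supseteq \ker\pi$, so $L/\sim_{\gamma+1}$ is a quotient of $\pp_R^1(\ker I)$ and has $\mcal{L}$-dim at most $\beta$; a further $\beta+1$ iterations then trivialise $L/\sim_{\gamma+\beta+2}$, giving $\mcal{L}\text{-}\dim L\leq \gamma+1+\beta$. The ``in particular'' clause is immediate: the lower bound gives one direction of the iff, and the upper bound gives the converse (the sum of two ordinals is itself an ordinal).

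The main obstacle I anticipate is the identification of $\ker\pi$ with the lattice congruence on $\pp_R^1(\mcal{C})$ generated by $\phi\sim\psi$. The inclusion ``generated $\subseteq\ker\pi$'' is free, but the reverse inclusion is delicate: a priori the restriction to arity $1$ of the congruence on the full $\pp_R^{<\omega}(\mcal{C})$ arising from imposing $\phi(M)=\psi(M)$ could be strictly finer than the one generated at arity $1$, and without equality the quotient $L/\sim_{\gamma+1}$ in the upper-bound step is only known to be a quotient of some intermediate lattice possibly strictly above $\pp_R^1(\ker I)$. The equality rests on the pp-elimination / preservation-of-induced-structure characterisation of ``$I$ is full on pure-injectives'' from \cite[3.17]{Intmodinmod}, combined with $\langle I\mcal{C}\rangle=\mcal{D}$; exactly the same input handles the surjectivity half of the claim that $I^{\ast}$ identifies $\pp_S^1(\mcal{D})$ with $[\psi,\phi]$.
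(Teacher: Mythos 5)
Your lower bound and overall skeleton match the paper's proof: identify $\pp_S^1(\mcal{D})$ with the interval $[\psi,\phi]$, observe $\pp_R^1(\ker I)$ is a quotient, run the transfinite congruence sequence $\sim_\alpha$ on $\pp_R^1(\mcal{C})$, and note that $\mcal{L}$-$\dim[\psi,\phi]=\gamma$ forces $\psi\sim_{\gamma+1}\phi$. The gap is in how you get from $\psi\sim_{\gamma+1}\phi$ to $\ker\pi\subseteq\,\sim_{\gamma+1}$. You assert that $\ker\pi$ equals the \emph{lattice} congruence on $\pp_R^1(\mcal{C})$ generated by $\phi\sim\psi$. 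This is false in general. Already for $R=k[x]/(x^2)$ with $\mcal{C}=\Mod\text{-}R$, the pp-1-lattice is the chain $x{=}0 < r|x < xr{=}0 < x{=}x$; the kernel congruence of restriction to $\{M : Mr=0\}$ identifies both $xr{=}0\sim x{=}x$ and $x{=}0\sim r|x$, whereas the lattice congruence generated by the top identification alone is strictly smaller (in a chain, collapsing one covering interval collapses nothing else). The $\sim_\alpha$ of the definable machinery are "model-theoretic'' congruences, not merely lattice-theoretic ones generated by one pair; your appeal to \cite[3.17]{Intmodinmod} does not bridge this, since that result gives the identification $\pp_S^1(\mcal{D})\cong[\psi,\phi]$ (the surjectivity half of your $I^\ast$) rather than anything about $\ker\pi$.

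The paper closes this gap with a different ingredient, Lemma \ref{defsubcatformdim}: for each ordinal $\alpha$, the congruence $\sim_\alpha$ on $\pp_R^1(\mcal{C})$ is \emph{realised} by a definable subcategory $\mcal{C}_\alpha\subseteq\mcal{C}$, in the sense that $\sigma\sim_\alpha\tau$ iff $\sigma(M)=\tau(M)$ for all $M\in\mcal{C}_\alpha$. With this in hand, $\psi\sim_{\gamma+1}\phi$ immediately gives $\mcal{C}_{\gamma+1}\subseteq\ker I$, hence the quotient map to $\pp_R^1(\mcal{C}_{\gamma+1})=\pp_R^1(\mcal{C})/\!\sim_{\gamma+1}$ factors through the quotient map to $\pp_R^1(\ker I)$, which is exactly the containment $\ker\pi\subseteq\,\sim_{\gamma+1}$ you need — and it sidesteps entirely the (false) claim that $\ker\pi$ is lattice-generated by $(\phi,\psi)$. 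You correctly flagged this step as the main obstacle, but the fix you proposed is the wrong one; the actual fix is \ref{defsubcatformdim}. A minor further point: the underlying pp-pair $\phi/\psi$ has arity $m$ in general, so $[\psi,\phi]$ lives in $\pp_R^m(\mcal{C})$, and one should invoke the standard fact that $\mcal{L}$-$\dim\pp_R^1(\mcal{C})=\mcal{L}$-$\dim\pp_R^m(\mcal{C})$, as the paper does implicitly.
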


\begin{lemma}\label{latticeiso}
Let $\mcal{C}\subseteq \Mod\text{-}R$ and $\mcal{D}\subseteq \Mod\text{-}S$ be definable subcategories and let $I:\mcal{C}\rightarrow \mcal{D}$ be an interpretation functor with underlying pp-pair $\nf{\phi}{\psi}$. For each $\sigma\in \pp_S^n$, there exists a unique $\sigma'\in [\psi,\phi]_\mcal{C}$ such that for all $M\in \mcal{D}$, $\sigma(IM)=\sigma'(M)/\psi(M)$. The assignment $\sigma\mapsto \sigma'$ gives a lattice homomorphism $\pp_S^1(\mcal{D})\rightarrow [\psi,\phi]_\mcal{C}$. If $\langle I\mcal{C}\rangle=\mcal{D}$ then this lattice homomorphism is injective and if $I$ is full on pure-injectives then it is surjective.
\end{lemma}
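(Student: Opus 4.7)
The plan is to split the lemma into four constituent claims and prove them in turn: existence and uniqueness of $\sigma'$, the lattice-homomorphism property, injectivity under $\langle I\mcal{C}\rangle=\mcal{D}$, and surjectivity under fullness on pure-injectives. The first three are formal manipulations of pp-formulas; only the last uses the fullness hypothesis essentially, via the Ziegler-spectrum correspondence of Theorem~\ref{fpiZghomeo}.

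For existence, I would produce $\sigma'$ by a direct syntactic translation using the data $(\phi/\psi;(\rho_s)_{s\in S})$ defining $I$. Writing $\sigma(\mathbf{x})=\exists \mathbf{y}\,\bigwedge_i\bigl(\sum_j x_js_{ij}+\sum_k y_kt_{ik}=0\bigr)$ over $S$, construct $\sigma'(\mathbf{x})$ by constraining each variable to $\phi$, introducing auxiliary variables $u_{ij},v_{ik}$ with $\rho_{s_{ij}}(x_j,u_{ij})$ and $\rho_{t_{ik}}(y_k,v_{ik})$ witnessing the $S$-scalar actions, and replacing each equation $=0$ by $\sum_j u_{ij}+\sum_k v_{ik}\in\psi$. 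By construction $\psi^n\le\sigma'\le\phi^n$ and $\sigma'(M)/\psi^n(M)=\sigma(IM)$ for every $M\in\mcal{C}$. Uniqueness in $[\psi,\phi]_\mcal{C}$ is immediate from the bijection between subgroups of $(IM)^n$ and subgroups of $\phi(M)^n$ containing $\psi(M)^n$. The lattice-homomorphism property follows by transferring $(\sigma_1\wedge\sigma_2)(IM)=\sigma_1(IM)\cap\sigma_2(IM)$ and $(\sigma_1+\sigma_2)(IM)=\sigma_1(IM)+\sigma_2(IM)$ through the quotient $\phi(M)^n\twoheadrightarrow (IM)^n$ to the corresponding identities for $\sigma_1',\sigma_2'$ modulo $\sim_\mcal{C}$.

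Injectivity under $\langle I\mcal{C}\rangle=\mcal{D}$ is routine: if $\sigma_1'\sim_\mcal{C}\sigma_2'$ then $\sigma_1(IM)=\sigma_2(IM)$ for every $M\in\mcal{C}$, so $\sigma_1,\sigma_2$ agree pointwise on $I\mcal{C}$; since pp-formulas are preserved by products, direct limits, and pure-submodules, this agreement extends to the definable closure $\langle I\mcal{C}\rangle=\mcal{D}$, giving $\sigma_1\sim_\mcal{D}\sigma_2$.

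For surjectivity, given $\tau\in[\psi,\phi]_\mcal{C}$, I want $\sigma\in\pp_S^1$ with $\sigma(IM)=\tau(M)/\psi(M)$ for every $M\in\mcal{C}$. Because $I$ preserves pure-embeddings, pp-formulas over both $R$ and $S$ are preserved and reflected by pure-embeddings, and $\mcal{C}$ is closed under pure-injective hulls, the identity reduces to its verification on pure-injective $N\in\mcal{C}$. On such $N$ the subgroup $\tau(N)/\psi(N)\subseteq IN$ is $\End_R(N)$-invariant and, by fullness on pure-injectives applied with $L=M=N$, $\End_S(IN)$-invariant. To produce the single $S$-pp-formula $\sigma$, the plan is to use Theorem~\ref{fpiZghomeo}: the homeomorphism $\Zg(\mcal{C})\setminus\ker I\cong I\mcal{C}\cap\Zg(\mcal{D})$ matches the basic open set $(\tau/\psi)\cap(\Zg(\mcal{C})\setminus\ker I)$ with the trace on $I\mcal{C}\cap\Zg(\mcal{D})$ of a basic open $(\sigma/0)$, determining a candidate $\sigma\in\pp_S^1(\mcal{D})$. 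The main obstacle is to upgrade this topological match (a coincidence of Ziegler supports) to the desired pointwise equality $\sigma(IN)=\tau(N)/\psi(N)$; I expect this to come from combining the bijection of basic opens with the already-verified lattice-homomorphism property (so that two pp-pairs coinciding on Ziegler supports coincide as elements of the pp-lattice) together with algebraic compactness of the pure-injective $IN$, which forces pp-definable subgroups to be cut out by individual pp-formulas rather than arbitrary intersections.
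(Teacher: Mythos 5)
The existence, uniqueness, lattice-homomorphism and injectivity parts of your argument are correct and essentially agree with the paper's (which treats them as immediate from the definition of an interpretation functor; for injectivity the paper argues the contrapositive of what you wrote but with the same content: if $\sigma>\tau$ in $\pp_S^1(\mcal{D})$ and $\langle I\mcal{C}\rangle=\mcal{D}$ then some $M\in\mcal{C}$ already witnesses $\sigma(IM)\supsetneq\tau(IM)$).

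The surjectivity step, however, has a genuine gap, and your own hedging (``the main obstacle is to upgrade this topological match\dots'') correctly identifies where it is, but the proposed fix does not work. First, the two auxiliary claims you invoke are false: two pp-pairs with the same Ziegler support need \emph{not} coincide in the pp-lattice (over $\Z$, the pairs $x=x/2\mid x$ and $x=x/4\mid x$ have the same support on $\Zg_\Z$ but are distinct in $\pp^1_\Z$); and algebraic compactness does \emph{not} force arbitrary intersections of pp-definable subgroups to be cut out by a single pp-formula. So the bijection of basic Ziegler opens coming from Theorem~\ref{fpiZghomeo} does not determine a pp-formula $\sigma$ satisfying the required pointwise identity $\sigma(IN)=\tau(N)/\psi(N)$. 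Second, there is a circularity concern: Theorem~\ref{fpiZghomeo} is \cite[3.19]{Intmodinmod}, whose proof in that source is built on \cite[3.17]{Intmodinmod}, and \cite[3.17]{Intmodinmod} is precisely the surjectivity statement you are trying to establish (it is also the reference the paper gives for this claim). Deriving surjectivity back from \ref{fpiZghomeo} therefore presupposes what is to be proved. The paper simply cites \cite[3.17]{Intmodinmod} here; if you want a self-contained argument, the route is not via Ziegler supports but via the characterization in \cite{Intmodinmod} that fullness on pure-injectives is \emph{equivalent} to every subgroup of $\phi(N)/\psi(N)$ (for $N$ pure-injective in $\mcal{C}$) which is defined over $R$ by a formula in $[\psi,\phi]$ being pp-definable over $S$, which is proved there directly and not by passing through the topology.
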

\begin{proof}That such a $\sigma'$ exists follow easily from the definition of an interpretation functor. Uniqueness and that the assignment $\sigma\mapsto \sigma'$ gives a lattice homomorphism follows directly from the defining property of $\sigma'$. Suppose $\langle I\mcal{C}\rangle=\mcal{D}$ and $\sigma>\tau$ in $\pp_S^1(\mcal{D})$. Since $\langle I\mcal{C}\rangle=\mcal{D}$, there exists $M\in\mcal{C}$ such that $\sigma(IM)\supsetneq\tau(IM)$. Thus $\sigma'(M)\supsetneq \tau'(M)$. So the lattice homomorphism is injective. The final claim is a consequence of \cite[3.17]{Intmodinmod}.
\end{proof}

In order to prove \ref{bounds} we need a folklorish result which we were unable to find an explicit reference for. It can be proved using model theoretic methods or using functor categories methods, it follows from \cite[12.3.18 \& 13.1.4]{PSL}.

\begin{lemma}\label{defsubcatformdim}
Let $\mcal{L}$ be a non-empty class of lattices closed under sublattices and quotient lattices. Let $\mcal{E}$ be a definable subcategory of $\Mod$-$R$. For $\alpha$ an ordinal, let $\sim_\alpha$ be the congruence relation on $\pp_R^1(\mcal{E})$ induced by iteratively collapsing intervals in $\mcal{L}$ $\alpha$ times. Let 
\[\mcal{E}_\alpha:=\{M\in \mcal{E} \st \sigma(M)=\tau(M) \text{ for all }\sigma,\tau \text{ with }\sigma\sim_\alpha \tau\}.\] Then, for all $\sigma,\tau\in \pp_R^1$, $\sigma\sim_\alpha\tau$ if and only if $\sigma(M)=\tau(M)$ for all $M\in \mcal{E}_\alpha$. 
\end{lemma}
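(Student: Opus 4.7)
The plan is to prove the statement by transfinite induction on $\alpha$. One direction, that $\sigma \sim_\alpha \tau$ implies $\sigma(M)=\tau(M)$ for all $M \in \mcal{E}_\alpha$, is immediate from the definition of $\mcal{E}_\alpha$; the content is the converse. I would strengthen the inductive hypothesis to the claim that $\mcal{E}_\alpha$ is a definable subcategory of $\Mod\text{-}R$ and that restriction of pp-formulae induces a lattice isomorphism $\pp_R^1(\mcal{E})/\sim_\alpha \xrightarrow{\sim} \pp_R^1(\mcal{E}_\alpha)$. The lemma is then a direct reformulation of this statement.

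The base case reduces to the following general fact, which I regard as the main obstacle: for any set $P$ of pp-pairs over $\mcal{E}$, if $\sim_P$ is the congruence on $\pp_R^1(\mcal{E})$ generated by $P$ and $\mcal{E}_P$ is the definable subcategory of $\mcal{E}$ cut out by closing the pairs in $P$ (definable by \cite[3.4.7]{PSL}), then restriction of pp-formulae induces an isomorphism $\pp_R^1(\mcal{E})/\sim_P \cong \pp_R^1(\mcal{E}_P)$. The map is well-defined and surjective; the content is injectivity. The cleanest argument passes through the functor-category dictionary \cite[12.3.18 \& 13.1.4]{PSL}: definable subcategories of $\mcal{E}$ correspond to Serre subcategories of the finitely presented functor category, pp-pairs correspond to subquotients of the forgetful functor, the congruence generated by $P$ corresponds to the Serre subcategory generated by the subquotients named by $P$, and passing to the quotient category realises both sides simultaneously.

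For the successor step, granted the inductive isomorphism $\pp_R^1(\mcal{E})/\sim_\alpha \cong \pp_R^1(\mcal{E}_\alpha)$, the construction of $\sim_{\alpha+1}$ shows that its image under the quotient map is precisely the congruence on $\pp_R^1(\mcal{E}_\alpha)$ generated by $\mcal{L}$-intervals, i.e.\ the ``zeroth'' congruence relative to $\mcal{E}_\alpha$. Applying the base case to $\mcal{E}_\alpha$ in place of $\mcal{E}$ and checking $(\mcal{E}_\alpha)_0 = \mcal{E}_{\alpha+1}$ directly from the definitions yields the required isomorphism at stage $\alpha+1$.

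For limit ordinals $\alpha$, I have $\sim_\alpha = \bigcup_{\beta<\alpha}\sim_\beta$ and hence $\mcal{E}_\alpha = \bigcap_{\beta<\alpha}\mcal{E}_\beta$, which remains definable as an intersection of definable subcategories. Suppose $\sigma(M)=\tau(M)$ for every $M \in \mcal{E}_\alpha$; replacing $\tau$ by $\sigma \wedge \tau$ I may assume $\sigma \geq \tau$. The pp-pair $\sigma/\tau$ then defines an open subset of $\Zg(\mcal{E})$ disjoint from the closed set $\Zg(\mcal{E}_\alpha)=\bigcap_{\beta<\alpha}\Zg(\mcal{E}_\beta)$, which is an intersection of a descending chain of closed sets. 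Since pp-pair opens are compact, this open set is already disjoint from some $\Zg(\mcal{E}_\beta)$ with $\beta < \alpha$, so $\sigma(M)=\tau(M)$ for all $M \in \mcal{E}_\beta$, and the inductive hypothesis gives $\sigma \sim_\beta \tau$, hence $\sigma \sim_\alpha \tau$. Compactness of pp-pair open sets is the key ingredient at limit stages.
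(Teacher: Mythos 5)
Your argument is correct and follows the functor-category route the paper itself points to: the paper offers no proof of this lemma beyond the remark that it is folklore and ``follows from [PSL, 12.3.18 \& 13.1.4]'', which is precisely the Serre-subcategory/definable-subcategory dictionary you invoke at the base and successor steps. Your transfinite-induction scaffolding, the use of the closure of $\mcal{L}$ under sublattices and isomorphism to identify the generated lattice congruence with the generated Serre subcategory, and the compactness argument at limit ordinals supply exactly the details the paper leaves to the reader.
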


\begin{proof}[Proof of \ref{bounds}]
Let $\nf{\phi}{\psi}$ be the underlying pp-pair of $I$.
By \ref{latticeiso}, the interpretation functor $I$ induces a lattice isomorphism from $\pp_S^1(\mcal{D})$ to $[\psi,\phi]_\mcal{C}$. 

The lattice $\pp_R^1\ker I$ is a quotient of $\pp^1_R(\mcal{C})$ and the lattice $[\psi,\phi]_\mcal{C}\cong\pp_S^1(\mcal{D})$ is an interval of $\pp^n_R(\mcal{C})$. Hence if $\mcal{L}\text{-}\dim \pp^1_R(\mcal{C})=\mcal{L}\text{-}\dim \pp^n_R(\mcal{C})$ exists then so does $\mcal{L}\text{-}\dim\pp_R^1(\ker I)$ and $\mcal{L}\text{-}\dim\pp_S^1(\mcal{D})$. Moreover, the left hand inequality holds.

Using the notation of \ref{defsubcatformdim}, since $\mcal{L}\text{-}\dim [\psi,\phi]_\mcal{C}=\alpha$, we have that $\psi\sim_{\alpha+1}\phi$. Hence $\mcal{C}_{\alpha+1}\subseteq \ker I$. Therefore the quotient map $\pp_R^1(\mcal{C})\rightarrow \pp_R^1(\mcal{C})/\sim_{\alpha+1}$ factors through the quotient map $\pp_R^1(\mcal{C})\rightarrow \pp_R^1(\ker I)$. So the right hand equality holds. 
\end{proof}

\section{Applications to $R$-torsion-free modules over $R$-orders}\label{app}

Throughout this section, we fix the following assumptions and notation.
Let $R$ be a complete discrete valuation domain with maximal ideal generated by $\pi$ and field of fractions $Q$. Let $\Lambda$ be an $R$-order with $A:=Q\Lambda$ a separable $Q$-algebra. Recall that an $R$-order is an $R$-algebra which is torsion-free and finitely generated as an $R$-module. Let $\Latt_\Lambda$ denote the category of right $\Lambda$-lattices, i.e. the category of finitely generated $\Lambda$-modules which are torsion-free as $R$-modules, and let $\Tf_\Lambda$ denote the category of $R$-torsion-free $\Lambda$-modules.
\subsection{The functor of Ringel and Roggenkamp}
Under the hypotheses of this section, by \cite[26.5]{CurtisReiner}, there always exists a hereditary $R$-order $\Lambda\subseteq \Gamma\subseteq A$. Let $I\subseteq \rad(\Lambda)$ be an ideal of $\Gamma$ such that $\pi^n\in I$ for some $n\in\N$\footnote{This final condition appears to be missing in \cite{RR}. However, it is obviously necessary in order for $R_I$ to be Artinian: Let $\Lambda=R\times R$ then $\rad(\Lambda)=\mfrak{m}\times\mfrak{m}$ and $\mfrak{m}\times\{0\}\subseteq \rad(\Lambda)$.}. Let \[D:=                                                          \begin{pmat}
                                                            \Lambda/I & \Gamma/I \\
                                                            0 & \Gamma/I \\
                                                          \end{pmat}
                                                        .\]
Note that $D$ is an $R_I$-Artin algebra where $R_I:=R/R\cap I$.

We will be particularly interested in the case of \textbf{B\"ackstr\"om orders}, that is, when $\Gamma\supsetneq \Lambda$ and $I=\rad(\Lambda)=\rad(\Gamma)$. In this case, $D$ is a hereditary $R/\pi R$-algebra since $\Lambda/\rad(\Lambda)$ and $\Gamma/\rad(\Gamma)$ are both semisimple.

The category of right $D$-modules is equivalent to the category of triples $(U,V,f)$ where $U\in\Mod\text{-}\Lambda/I$, $V\in\Mod\text{-}\Gamma/I$ and $f:U\rightarrow V$ is a homomorphism of $\Lambda/I$-modules. Morphisms are given by $(\alpha,\beta):(U_1,V_1,f_1)\rightarrow (U_2,V_2,f_2)$ where $\alpha:U_1\rightarrow U_2$ is a $\Lambda/I$ homomorphism, $\beta:V_1\rightarrow V_2$ is a $\Gamma/I$ homomorphism and the diagram
\[\xymatrix@=20pt{
  U_1 \ar[d]_{\alpha} \ar[r]^{f_1} & V_1\ar[d]^{\beta} \\
  U_2 \ar[r]^{f_2} & V_2   }\] commutes.

Under this equivalence a triple $(U,V,f)$ is mapped to the $D$-module with underlying abelian group $U\times V$ and $D$-action given by
\[(u,v)\cdot
         \begin{pmat}
           a & b \\
           0 & c \\
         \end{pmat}
=(ua,f(u)b+vc).
\]
We will generally identify $\Mod\text{-}D$ with the category of triples.

For any $M\in \Tf_\Lambda$, define $M\Gamma$ to be the $\Gamma$-module generated by $M$ inside $MQ$. Then the inclusion of $M$ in $M\Gamma$ induces an embedding of $\sigma_M:M/MI\rightarrow M\Gamma/MI$.
Let
\[\mathbb{F}:\Tf_\Lambda\rightarrow \Mod\text{-}D, \ M\mapsto (M/MI, M\Gamma/MI,\sigma_M).\]

This is the natural extension to $\Tf_\Lambda$ of the functor of Ringel and Roggenkamp defined in \cite{RR} as a functor from $\Latt_\Lambda$ to $\mod\text{-}D$\footnote{In \cite{RR}, the functor $\mathbb{F}$ is also studied when $\Gamma$ is not hereditary. However, it is then restricted to the $\Lambda$-lattices $M$ such that $M\Gamma$ is projective.}.

We now give a description of $\mathbb{F}$ in terms of pp formulae. So, in particular, we show that $\mathbb{F}$ is an interpretation functor. Every ideal $K\lhd \Lambda$ is finitely generated as an $R$-submodule. If $K=\sum_{i=1}^m\alpha_iR$ then let $\Theta_K(x)$ be the pp formula $\exists \mathbf{y} \ x=y_1\alpha_1+\ldots +y_m\alpha_n$. For any $M\in\Mod\text{-}\Lambda$, $\Theta_K(M)$ is $MK$. Since $\Lambda\subseteq\Gamma\subseteq Q\Lambda$ and $\Gamma$ is an $R$-order, there exists $n\in \N$ such that $\pi^n\Gamma\subseteq \Lambda$. Then $\pi^n\Gamma$ is a finitely generated ideal of $\Lambda$. Therefore the functor $\mathbb{F}$ is given by $(\phi/\psi;(\rho_\delta)_{\delta\in D})$ where
\begin{itemize}
\item $\phi$ is $\Theta_{\pi^n\Lambda}(x_1)\wedge \Theta_{\pi^n\Gamma}(x_2)$;
\item $\psi$ is $\Theta_{\pi^nI}(x_1)\wedge\Theta_{\pi^nI}(x_2)$; and
\item for each $\delta:=\begin{pmat}a+I & b+I\\ 0 & c+I\end{pmat}$ where $a\in \Lambda$ and $b,c\in \Gamma$,
\[\rho_{\delta}(\mathbf{x},\mathbf{y}) \text{ is } y_1=x_1a\wedge y_2=x_1b+x_2c.\]
\end{itemize}

Let $\mcal{D}$ denote the full subcategory of $D$-modules $(U,V,f)$ such that $f$ is a monomorphism and $\im f$ generates $V$ as a $\Gamma/I$-module.

\begin{remark}
The category $\mcal{D}$ is a definable subcategory of $\Mod\text{-}D$.
\end{remark}
\begin{proof}
This is particularly easy to see in terms of pp formulae. Note $f$ is a monomorphism if and only if the subset defined by $x\cdot \begin{pmat}  0 & 1 \\ 0 & 0\\ \end{pmat}=0$ is $\{0\}$. Let $b_1,\ldots, b_n$ generate $\Gamma/I$ as an $R_I$-module. Then $\im f$ generates $V$ if and only if the pp formulae $\exists y \ y\cdot\begin{pmat}  0 & 0 \\ 0 & 1\\ \end{pmat}=x$ and $\exists y_1,\ldots,y_n  \ x= \sum_{i=1}^n y_i\cdot\begin{pmat}  0 & b_i \\ 0 & 0\\ \end{pmat}$ define the same subset in the $D$-modules corresponding to $(U,V,f)$.
\end{proof}

%

\begin{theorem}\label{RRFun}\cite[1.3 \& 1.4]{RR}
The functor $\mathbb{F}|_{\Latt_\Lambda}$ is full and for all $M\in \mcal{D}\cap\mod\text{-}D$ there exists $M'\in \Latt_\Lambda$ such that $\mathbb{F}M'\cong M$.
\end{theorem}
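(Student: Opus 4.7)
I will treat fullness and essential surjectivity of $\mathbb{F}|_{\Latt_\Lambda}$ onto $\mcal{D}\cap\mod\text{-}D$ separately.

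\emph{Fullness.} Given $M_1,M_2\in\Latt_\Lambda$ and a morphism $(\alpha,\beta):\mathbb{F}M_1\to\mathbb{F}M_2$ in $\mod\text{-}D$, I use that $M_i\Gamma$ is a finitely generated $\Gamma$-sublattice of $M_iQ$ and $\Gamma$ is hereditary to conclude that $M_1\Gamma$ is a projective $\Gamma$-module. The $\Gamma/I$-homomorphism $\beta:M_1\Gamma/M_1I\to M_2\Gamma/M_2I$ therefore lifts to a $\Gamma$-homomorphism $\tilde\beta:M_1\Gamma\to M_2\Gamma$. The compatibility $\beta\circ\sigma_{M_1}=\sigma_{M_2}\circ\alpha$ forces $\tilde\beta(m)+M_2I=\sigma_{M_2}(\alpha(m+M_1I))$ for every $m\in M_1$, so $\tilde\beta(m)\in M_2+M_2I$; since $I\subseteq\Lambda$ and $M_2$ is a $\Lambda$-module, $M_2I\subseteq M_2$, giving $\tilde\beta(m)\in M_2$. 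Thus $\tilde\beta$ restricts to a $\Lambda$-homomorphism $h:M_1\to M_2$, and by construction $\mathbb{F}h=(\alpha,\beta)$.

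\emph{Essential surjectivity.} Fix $(U,V,f)\in\mcal{D}\cap\mod\text{-}D$. The plan has three steps: (i) construct a $\Gamma$-lattice $L$ together with a $\Gamma/I$-isomorphism $L/LI\cong V$; (ii) define $M'\subseteq L$ to be the preimage of $\im f\subseteq V$ under $L\twoheadrightarrow L/LI\cong V$, and verify that $M'$ lies in $\Latt_\Lambda$ ($\Lambda$-stability follows from $\im f$ being a $\Lambda/I$-submodule of $V$, while finite generation over $\Lambda$ and $R$-torsion-freeness are inherited from $L$, which is a noetherian $R$-module); (iii) deduce from $\im f$ generating $V$ as a $\Gamma/I$-module that $L=M'\Gamma+LI$, then apply Nakayama's lemma over $R$ to the $R$-finitely generated module $L/M'\Gamma$ (using $\pi^n\in I$) to conclude $L=M'\Gamma$. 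It follows that $LI=M'\Gamma I=M'I$, and one then checks directly that $\mathbb{F}M'=(M'/M'I,M'\Gamma/M'I,\sigma_{M'})\cong(\im f,V,\text{incl})\cong(U,V,f)$ via $f$.

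The main obstacle is step (i), the lifting of the $\Gamma/I$-module $V$ to a $\Gamma$-lattice modulo $I$. I plan to exploit the hereditary hypothesis on $\Gamma$: every finitely generated projective $\Gamma/I$-module is of the form $P/PI$ for $P$ a direct summand of some $\Gamma^n$, which is automatically a $\Gamma$-lattice. In the B\"ackstr\"om case, $I=\rad(\Gamma)$ and $\Gamma/I$ is semisimple, so every finitely generated $V$ is projective over $\Gamma/I$ and the lift is immediate. For more general $I$ one must lift a projective presentation of $V$ over $\Gamma/I$ to $\Gamma$-lattices and adjust the resulting surjection so that its kernel becomes exactly $LI$, a construction which the hereditary structure of $\Gamma$ renders feasible.
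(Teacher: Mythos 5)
The paper does not give a proof of this statement; it cites \cite[1.3 \& 1.4]{RR} directly, so there is no in-paper argument to compare against. Your fullness argument is correct and is essentially forced: hereditarity of $\Gamma$ makes $M_1\Gamma$ projective, so the lift $\tilde\beta$ exists; compatibility with $\alpha$ forces $\tilde\beta(M_1)\subseteq M_2$; and since $M_1$ generates $M_1\Gamma$ over $\Gamma$, the restriction $h:=\tilde\beta|_{M_1}$ has $\mathbb{F}h=(\alpha,\beta)$. Steps (ii)--(iii) of your essential-surjectivity outline (taking the preimage of $\im f$, Nakayama using $\pi^n\in I$ to get $L=M'\Gamma$, and $LI=M'(\Gamma I)=M'I$) are also sound once step (i) is in hand.

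Step (i) is a genuine gap, and your remark that the hereditary structure of $\Gamma$ ``renders feasible'' the lift of $V$ to a $\Gamma$-lattice $L$ with $L/LI\cong V$ is an assertion, not an argument; worse, taken at the level of generality in which you state it, such a lift can simply fail to exist. For instance, with $\Lambda=\Gamma=R$ and $I=\pi^2R$, the triple $(R/\pi R,\,R/\pi R,\,\mathrm{id})$ belongs to $\mcal{D}\cap\mod\text{-}D$ as defined, yet no $R$-lattice $L$ (a finite free $R$-module) satisfies $L/LI\cong R/\pi R$, since every such quotient is a free $R/\pi^2R$-module. So a naive ``lift a projective presentation and adjust'' cannot work without some extra constraint on $V$. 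Your proof is complete in the B\"ackstr\"om case, where $\Gamma/I$ is semisimple and hence every finitely generated $V$ is projective over $\Gamma/I$ and lifts to a projective $\Gamma$-lattice; and that is the case the rest of the paper actually uses. But for the statement as transcribed here you would need either additional hypotheses on $\mcal{D}$ (e.g.\ $V\cong P/PI$ with $P$ a projective $\Gamma$-lattice, which is automatic in the B\"ackstr\"om case) or an argument you have not supplied.
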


The next remark follows from \cite[5.22 (iv)]{CurtisReiner}.

\begin{remark}
There exists $m\in\N$ such that $\rad(\Lambda)^m\subseteq \pi\Lambda$. Hence, $I^m\subseteq \pi\Lambda$.
\end{remark}

For any $L\in \mod\text{-}\Lambda$, the canonical surjection $\pi_L:L\rightarrow L/\text{tor}(L)$ is a left $\Latt_\Lambda$-approximation of $L$, where $\text{tor}(L)$ denotes the $\Lambda$-submodule of $R$-torsion elements of $L$. Therefore, the category $\Latt_\Lambda$ is covariantly finite in $\mod\text{-}\Lambda$.

\begin{theorem}\label{RRpureinj}
The functor $\mathbb{F}:\Tf_\Lambda\rightarrow \Mod\text{-}D$ is full on pure-injectives, $\ker \mathbb{F}$ is the class of $R$-divisible $R$-torsion-free $\Lambda$-modules and $\langle \mathbb{F}\Tf_\Lambda\rangle =\mcal{D}$.
\end{theorem}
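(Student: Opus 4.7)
The plan is to derive the fullness statement from Theorem \ref{fullonfpimpliesfullonpureinjectives} applied with $\mcal{X}=\Latt_\Lambda$, $\mcal{C}=\Tf_\Lambda$, and target $\Mod\text{-}D$, and then to identify $\ker\mathbb{F}$ and $\langle \mathbb{F}\Tf_\Lambda\rangle$ by direct computation from the pp-description of $\mathbb{F}$ together with \ref{RRFun}.

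First I would verify the hypotheses of \ref{fullonfpimpliesfullonpureinjectives}. Covariant finiteness of $\Latt_\Lambda$ in $\mod\text{-}\Lambda$ was observed just before the theorem, and fullness of $\mathbb{F}|_{\Latt_\Lambda}$ is part of Theorem \ref{RRFun}. Since $\pi^n\in I$, both $\Lambda/I$ and $\Gamma/I$ are Artinian (they are finitely generated modules over the Artinian ring $R/(R\cap I)$), so for $M\in\Latt_\Lambda$ the modules $M/MI$ and $M\Gamma/MI$ have finite length, whence $\mathbb{F}M\in\mod\text{-}D$. Finally, $\Tf_\Lambda$ is plainly closed under products, pure submodules and direct limits, and every $R$-torsion-free $\Lambda$-module is the directed union of its finitely generated, hence lattice, submodules; thus $\Tf_\Lambda=\overrightarrow{\Latt_\Lambda}=\langle \Latt_\Lambda\rangle$. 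Theorem \ref{fullonfpimpliesfullonpureinjectives} then yields fullness on pure-injectives.

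For $\ker\mathbb{F}$, I would first reduce $\mathbb{F}M=0$ to the single equation $M=MI$: if $M=MI$, then $M\Gamma=MI\Gamma=MI=M$ because $I$ is a two-sided ideal of $\Gamma$ contained in $\Lambda$, so both components of $\mathbb{F}M$ vanish. If $M\in\Tf_\Lambda$ is $R$-divisible the $R$-action on $M$ extends to $Q$, making $M$ an $A$-module, so in particular $M\Gamma=M$, and $M=M\pi^n\subseteq MI$ since $\pi^n\in I$. Conversely, $M=MI$ iterates to $M=MI^m\subseteq M\pi$ via the remark $I^m\subseteq\pi\Lambda$, so $M$ is $\pi$-divisible, hence $R$-divisible.

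For $\langle\mathbb{F}\Tf_\Lambda\rangle=\mcal{D}$, the inclusion $\mathbb{F}\Tf_\Lambda\subseteq\mcal{D}$ is immediate from the construction of $\sigma_M$ and the fact that $M$ generates $M\Gamma$ as a $\Gamma$-module. For the reverse, given $N=(U,V,f)\in\mcal{D}$, I would write $U$ as the directed union of its finitely generated $\Lambda/I$-submodules $U_i$, set $V_i:=f(U_i)\cdot(\Gamma/I)\subseteq V$ and $f_i:=f|_{U_i}$, and then check that each $(U_i,V_i,f_i)$ lies in $\mcal{D}\cap\mod\text{-}D$ and that $N=\varinjlim (U_i,V_i,f_i)$ (this uses that $\im f$ generates $V$ over $\Gamma/I$). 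By Theorem \ref{RRFun} each $(U_i,V_i,f_i)$ is isomorphic to $\mathbb{F}L_i$ for some $L_i\in\Latt_\Lambda$, so $N\in\langle\mathbb{F}\Latt_\Lambda\rangle\subseteq\langle\mathbb{F}\Tf_\Lambda\rangle$. I expect this last step, namely the explicit approximation of an arbitrary module in $\mcal{D}$ by finitely presented modules still lying in $\mcal{D}$, to be the main obstacle; the remaining arguments are routine applications of the hypotheses, the main theorem, and \ref{RRFun}.
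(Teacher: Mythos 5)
Your proposal is correct and follows essentially the same route as the paper: fullness on pure-injectives via Theorem \ref{fullonfpimpliesfullonpureinjectives} together with \ref{RRFun}, the kernel computation via $M=MI\Leftrightarrow M=M\pi$ using $I^m\subseteq\pi\Lambda$ and $\pi^n\in I$, and the identification of $\langle\mathbb{F}\Tf_\Lambda\rangle$ by exhibiting each $N\in\mcal{D}$ as a directed union of finitely generated submodules lying in $\mcal{D}$, which are images of lattices by \ref{RRFun}. The only difference is cosmetic: you build the directed system directly by setting $V_i:=f(U_i)\cdot(\Gamma/I)$, whereas the paper shows every finitely generated submodule of $N$ is contained in a finitely generated submodule belonging to $\mcal{D}$; both are valid.
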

\begin{proof}
 The first statement follows from \ref{fullonfpimpliesfullonpureinjectives} and \ref{RRFun}. Let $M\in\Tf_\Lambda$. Then $\mathbb{F}M=0$ if and only if $M/MI=0$. Since $I^m\subseteq \pi\Lambda$ and $\pi^l\in I$ for some $l,m\in\N$, $M=MI$ if and only if $M\pi=M$. Therefore $M\in \ker \mathbb{F}$ if and only if $M$ is $R$-divisible. For the final claim, it is enough to show that every $M\in \mcal{D}$ is a directed union of finitely presented modules in $\mcal{D}$. It follows from the definition of $\mathbb{F}$ that $\mathbb{F}M\in \mcal{D}$ for all $M\in\Tf_\Lambda$. Suppose $M:=(U,V,f)\in\mcal{D}$ and $M_0:=(U_0,V_0,f_0)$ is finitely presented (equivalently finitely generated) submodule of $M$. Then $f_0$ is injective. Take $u_1,\ldots,u_n\in U$ such that the $\Gamma/I$-submodule generated by $f(u_1),\ldots f(u_n)$ contains $V_0$. Let $U_1$ be the $\Lambda/I$-submodule generated by $U_0$ and $u_1,\ldots,u_n$ and let $V_1$ be the $\Gamma/I$-submodule generated by $f(u_1),\ldots f(u_n)$. Then $(U_1,V_1,f|_{U_1})$ is finitely generated and contains $M_0$.
\end{proof}

%
%
%
\subsection{Pure-injective $R$-torsion-free $\Lambda$-modules and the torsion free part of the Ziegler spectrum}\label{ZgFgeneral}

The torsion free part of $\Zg_\Lambda$, denoted $\Zg_\Lambda^{tf}$, is the closed set $\Zg(\Tf_\Lambda)$ of $\Zg_\Lambda$. This space was studied in \cite{tfpartRG}, \cite{Klein4}, \cite{Tfpart} and \cite{Maranda}. In this section, we first briefly review the general features of this space and then show how \ref{RRpureinj} can be used to understand the topology on $\Zg_\Lambda^{tf}$ in terms of the topology on $\Zg_D$.

The $R$-divisible $R$-torsion-free $\Lambda$-modules are a definable subcategory equivalent to $\Mod\text{-}Q\Lambda$. Since $Q\Lambda$ is separable, and hence semisimple, all $R$-divisible modules are injective (see \cite[Claim 2 p. 1128]{tfpartRG}) and hence pure-injective as $\Lambda$-modules. An $R$-torsion-free $\Lambda$-module $M$ is \textbf{$R$-reduced} if $\cap_{i\in\N}M\pi^i=\{0\}$.
For any $M\in\Tf_\Lambda$,
\[D_M:=\{m\in M \st \pi^n|m \text{ for all }n\in\N\}\] is a divisible submodule of $M$ and $M/D_M$ is $R$-reduced. Since $D_M$ is injective, $M\cong D_M\oplus M/D_M$.

Since we assume that $R$ is complete, $\Lambda$-lattices are pure-injective, see for example \cite[2.2]{Tfpart}. The indecomposable $\Lambda$-lattices are isolated in $\Zg_\Lambda^{tf}$ \cite[2.4]{Tfpart}. The set of indecomposable $\Lambda$-lattices are dense in $\Zg_\Lambda^{tf}$ \cite[2.2]{Tfpart} and hence, the previous sentence implies that they are exactly the isolated points. By the discussion above, all $N\in\Zg_\Lambda^{tf}$ are either $R$-divisible or $R$-reduced. There are finitely many indecomposable $R$-divisible $R$-torsion-free $\Lambda$-modules; they are the (restrictions to $\Lambda$ of the) simple $Q\Lambda$-modules. These are exactly the closed points of $\Zg_\Lambda^{tf}$ (this follows from \cite[2.8]{Tfpart}). If $\Lambda$ is of finite lattice type then, \cite[5.4]{Tfpart}, all points in $\Zg_\Lambda^{tf}$ are either $\Lambda$-lattices or $R$-divisible. The $R$-reduced modules in $\Zg_\Lambda^{tf}$ are those modules in the open set $\left(\nf{x=x}{\pi|x}\right)$ and the complement of this set may be identified $\Zg_{Q\Lambda}$.

For each simple $Q\Lambda$-module $S$ there exists an open subset $\mcal{V}(S)$ such that for all $N\in\Zg_\Lambda^{tf}$, $S$ is in the closure of $N$ if and only if $N\in\mcal{V}(S)$. Moreover $N\in\mcal{V}(S)$ if and only if $S$ is a direct summand of $NQ$. Based on this fact we get the following description of open subsets of $\Zg_{\Lambda}^{tf}$.

\begin{lemma}\cite[5.11]{Maranda}
Let $U$ be an open subset of $\Zg_\Lambda^{tf}$. Then
\[U=(U\cap\left(x=x/\pi|x\right))\cup\bigcup_{S\in\lambda(U)}\mcal{V}(S)\] where $\lambda(U):=\{S\in\Zg_{Q\Lambda} \st S\in U\}$.
\end{lemma}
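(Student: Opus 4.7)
The plan is to exploit the dichotomy recalled just above the statement: every point $N$ of $\Zg_\Lambda^{tf}$ is either $R$-reduced, in which case $N$ lies in the basic open set $\left(\nf{x=x}{\pi|x}\right)$, or $R$-divisible, in which case (since $Q\Lambda$ is semisimple) $N$ is the restriction to $\Lambda$ of a simple $Q\Lambda$-module. This partition reduces the equality to two straightforward containments, handled point by point.

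For the containment from left to right, I would take $N\in U$ and split by cases. If $N$ is $R$-reduced then $N\in U\cap\left(\nf{x=x}{\pi|x}\right)$, placing it in the first summand. If instead $N$ is $R$-divisible then $N$ is one of the simple $Q\Lambda$-modules, hence $N\in\lambda(U)$; applying the defining characterisation of $\mcal{V}(S)$ (namely $N'\in\mcal{V}(S)$ iff $S\in\overline{\{N'\}}$) to $S=N'=N$ gives $N\in\mcal{V}(N)$, so $N$ lies in the second summand.

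For the reverse containment, the first summand is visibly contained in $U$. For a term of the second summand, I would take $S\in\lambda(U)$ and $N\in\mcal{V}(S)$; unwinding the defining property of $\mcal{V}(S)$, this means $S$ lies in the closure of $\{N\}$. Since $U$ is an open neighbourhood of $S$, it must meet $\{N\}$, forcing $N\in U$ as required.

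There is no genuine obstacle here: the lemma is essentially a bookkeeping consequence of the structural facts about $\Zg_\Lambda^{tf}$ collected in the preceding paragraph, together with the definition of $\mcal{V}(S)$ via specialisation and the fact that a point of an open set is separated from the complement by every one of its neighbourhoods. The only small point to note is that $\mcal{V}(S)$ is declared to be open (so the right hand side is manifestly open, matching the hypothesis on $U$), and that the decomposition is stated as an ordinary union, so nothing needs to be said about the overlap between the two summands.
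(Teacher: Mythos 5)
Your proof is correct. The paper does not prove this lemma itself (it is cited from \cite[5.11]{Maranda}), but your argument is exactly the one suggested by the facts assembled in the preceding paragraph: the reduced/divisible dichotomy splits the left-to-right inclusion into the two summands, and the reverse inclusion follows from the specialisation characterisation of $\mcal{V}(S)$ together with openness of $U$, so there is nothing to object to.
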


Using \ref{fpiZghomeo}, we get the following corollary to \ref{RRpureinj}.

\begin{cor}\label{RRhomeomorphism}
The functor $\mathbb{F}:\Tf_\Lambda\rightarrow \Mod\text{-}D$ induces a homeomorphism between the open set $\left(\nf{x=x}{\pi|x}\right)$ of $R$-reduced points in $\Zg^{tf}_\Lambda$ and the closed subset $\Zg(\mcal{D}):=\Zg_D\cap\mcal{D}$.
\end{cor}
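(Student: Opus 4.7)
The plan is to deduce this corollary directly from Theorem~\ref{fpiZghomeo} combined with Theorem~\ref{RRpureinj}. The latter already tells us that $\mathbb{F}:\Tf_\Lambda\to\mcal{D}$ is full on pure-injectives, that $\ker\mathbb{F}$ consists precisely of the $R$-divisible modules in $\Tf_\Lambda$, and that $\langle\mathbb{F}\Tf_\Lambda\rangle=\mcal{D}$. Invoking Theorem~\ref{fpiZghomeo} then yields for free a homeomorphism from $\Zg(\Tf_\Lambda)\setminus\ker\mathbb{F}$ onto the closed set $\mathbb{F}\Tf_\Lambda\cap\Zg(\mcal{D})$. All that remains is to identify these two sets with the ones named in the statement.

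The identification of the domain is immediate: an $N\in\Zg_\Lambda^{tf}$ lies in $\ker\mathbb{F}$ iff $N$ is $R$-divisible iff $N\pi=N$, which is exactly the condition that the pp-pair $(x=x)/(\pi|x)$ closes on $N$. Hence $\Zg(\Tf_\Lambda)\setminus\ker\mathbb{F}$ is the basic open set $\bigl(\nf{x=x}{\pi|x}\bigr)$ of $R$-reduced points, as required.

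For the image, the plan is to show that the closed set $\mathbb{F}\Tf_\Lambda\cap\Zg(\mcal{D})$ must in fact be all of $\Zg(\mcal{D})$. Let $\mcal{E}\subseteq\mcal{D}$ be the definable subcategory corresponding to this closed set under the Ziegler correspondence. I would consider the preimage $\mathbb{F}^{-1}\mcal{E}:=\{M\in\Tf_\Lambda\mid \mathbb{F}M\in\mcal{E}\}$, which is again a definable subcategory of $\Tf_\Lambda$ because any pp-pair defining $\mcal{E}$ pulls back through the interpretation functor $\mathbb{F}$ (using the data $(\phi/\psi;(\rho_\delta)_{\delta\in D})$ exhibited just before Theorem~\ref{RRFun}) to a pp-pair on $\Tf_\Lambda$. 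By construction $\mathbb{F}^{-1}\mcal{E}$ contains every $R$-divisible module (these are in $\ker\mathbb{F}$) and, via the homeomorphism, every non-$R$-divisible indecomposable pure-injective in $\Tf_\Lambda$; so $\Zg(\mathbb{F}^{-1}\mcal{E})=\Zg_\Lambda^{tf}$, forcing $\mathbb{F}^{-1}\mcal{E}=\Tf_\Lambda$. Hence $\mathbb{F}\Tf_\Lambda\subseteq\mcal{E}$, and combining with $\langle\mathbb{F}\Tf_\Lambda\rangle=\mcal{D}$ gives $\mcal{E}=\mcal{D}$, so the image equals $\Zg(\mcal{D})$.

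The only point that requires any care is the definability of $\mathbb{F}^{-1}\mcal{E}$; beyond that the argument is a purely formal unwrapping of Theorems~\ref{RRpureinj} and~\ref{fpiZghomeo}.
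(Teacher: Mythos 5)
Your proof is correct and follows the route the paper intends: combine Theorem~\ref{fpiZghomeo} with Theorem~\ref{RRpureinj}, and identify the two sets. The paper states the corollary with no explicit proof, so the only thing to check is whether your supporting argument for the non-trivial point — that the image $\mathbb{F}\Tf_\Lambda\cap\Zg(\mcal{D})$ is actually all of $\Zg(\mcal{D})$, not a proper closed subset — holds up, and it does. The pullback of the pp-pairs cutting out $\mcal{E}$ along $\mathbb{F}$ is made rigorous by Lemma~\ref{latticeiso}, so $\mathbb{F}^{-1}\mcal{E}$ is indeed a definable subcategory of $\Tf_\Lambda$; since it contains every point of $\Zg_\Lambda^{tf}$ (the $R$-divisibles map to $0\in\mcal{E}$, the $R$-reduced ones map into $\Zg(\mcal{E})$), the Ziegler correspondence forces $\mathbb{F}^{-1}\mcal{E}=\Tf_\Lambda$, hence $\mcal{D}=\langle\mathbb{F}\Tf_\Lambda\rangle\subseteq\mcal{E}\subseteq\mcal{D}$.

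One remark: there is a shorter way to see surjectivity that stays closer to the already-proved results. By Theorem~\ref{RRFun}, every finitely presented module in $\mcal{D}$ is of the form $\mathbb{F}M'$ with $M'\in\Latt_\Lambda$ (and one can take $M'$ indecomposable, since a non-zero direct summand of $M'$ in $\ker\mathbb{F}$ would be a finitely generated $R$-divisible $\Lambda$-lattice, i.e.\ zero). Since $\mcal{D}=\langle\mathbb{F}\Latt_\Lambda\rangle$ and $\mathbb{F}\Latt_\Lambda\subseteq\mod\text{-}D$, the indecomposable direct summands of members of $\mathbb{F}\Latt_\Lambda$ form a dense subset of $\Zg(\mcal{D})$; they all lie in the image of the induced map, and the image is closed, so the image is all of $\Zg(\mcal{D})$. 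Either argument is fine; yours is more formal and self-contained, the alternative leans more directly on \ref{RRFun}.
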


This corollary allows us to understand the topology on $\Zg_\Lambda^{tf}$ in terms of the topology on $\Zg_D$.

\begin{definition}
For $V$ a closed subset of $\Zg_D$, define 
\[V_0:=\{N\in \left(\nf{x=x}{\pi|x}\right) \st \mathbb{F}N\in V\}\] and define $\overline{V}$ to be the closure of $V_0$ in $\Zg_\Lambda^{tf}$.
\end{definition}

\begin{proposition}\label{topFdesc}
Let $V$ be a closed subset of $\Zg_D$. Then \[\overline{V}=V_0\cup\{S\in\Zg_{Q\Lambda}\st \text{ there exists }N\in \left(x=x/\pi|x\right) \text{ with } S|NQ \text{ and } \mathbb{F}N\in V\}.\] Moreover, all closed subsets of $\Zg^{tf}_\Lambda$ are of the form $\overline{V}\cup W$ where $V\subseteq \Zg_D$ is a closed subset and $W\subseteq \Zg_{Q\Lambda}$.
\end{proposition}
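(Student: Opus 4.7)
The plan is to exploit the partition $\Zg_\Lambda^{tf}=\left(\nf{x=x}{\pi|x}\right)\sqcup\Zg_{Q\Lambda}$ into the open set of $R$-reduced points and the finite set of $R$-divisible ones, together with the homeomorphism of Corollary \ref{RRhomeomorphism}. First, $V$ closed in $\Zg_D$ forces $V\cap\Zg(\mcal{D})$ closed in $\Zg(\mcal{D})$, so its preimage $V_0$ under this homeomorphism is closed in $\left(\nf{x=x}{\pi|x}\right)$. Because $\left(\nf{x=x}{\pi|x}\right)$ is open in $\Zg_\Lambda^{tf}$, a standard argument then gives $V_0=\overline{V}\cap \left(\nf{x=x}{\pi|x}\right)$, so $\overline{V}\setminus V_0\subseteq\Zg_{Q\Lambda}$.

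To analyse the divisible part of $\overline{V}$, the key step is to extract from the Maranda lemma cited just before the corollary that, for each $S\in\Zg_{Q\Lambda}$, the open set $\mcal{V}(S)$ is the smallest open neighbourhood of $S$ in $\Zg_\Lambda^{tf}$: any open $U$ with $S\in U$ has $S\in\lambda(U)$ and hence $\mcal{V}(S)\subseteq U$, and $S\in\mcal{V}(S)$ trivially. Therefore $S\in\overline{V_0}$ if and only if $\mcal{V}(S)\cap V_0\neq\emptyset$. Using that $N\in\mcal{V}(S)$ if and only if $S$ is a direct summand of $NQ$, together with the definition of $V_0$, this condition rewrites as the existence of $N\in\left(\nf{x=x}{\pi|x}\right)$ with $S\mid NQ$ and $\mathbb{F}N\in V$, producing the first displayed identity.

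For the structural claim, given any closed $F\subseteq \Zg_\Lambda^{tf}$, I would set $W:=F\cap\Zg_{Q\Lambda}$ and take $V$ to be the image of the relatively closed set $F\cap\left(\nf{x=x}{\pi|x}\right)$ under the homeomorphism of \ref{RRhomeomorphism}; this image is closed in $\Zg(\mcal{D})$, and, because $\Zg(\mcal{D})$ is itself closed in $\Zg_D$, closed in $\Zg_D$. By construction $V_0=F\cap\left(\nf{x=x}{\pi|x}\right)$, so $V_0\cup W=F$ via the reduced/divisible partition. Closedness of $F$ together with $V_0\subseteq F$ gives $\overline{V}\subseteq F$, hence $\overline{V}\cup W\subseteq F$; the reverse inclusion follows from $V_0\subseteq\overline{V}$.

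The real content is the minimal-neighbourhood property of $\mcal{V}(S)$ at divisible points, which is what allows the closure $\overline{V}$ to be computed so cleanly in terms of $V_0$ and summands of $NQ$; once this is in hand, the rest reduces to bookkeeping around the reduced/divisible decomposition and the homeomorphism of \ref{RRhomeomorphism}.
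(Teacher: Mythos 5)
Your proof is correct and follows essentially the same route as the paper: decompose $\Zg_\Lambda^{tf}$ into the reduced open set and the finitely many divisible points, use \ref{RRhomeomorphism} to control the reduced part of $\overline{V}$, and control the divisible part via the sets $\mcal{V}(S)$. The one organizational difference is that you explicitly extract the clean statement that $\mcal{V}(S)$ is the \emph{smallest} open neighbourhood of $S$ from \cite[5.11]{Maranda}, whereas the paper works directly with the defining property of $\mcal{V}(S)$ and introduces the auxiliary closed set $V_1:=\{N\in\Zg_\Lambda^{tf}\st\mathbb{F}N\in V\}$ and the set $X$ of divisible $S$ whose neighbourhood misses $V_0$; the content is the same, and your formulation is arguably a bit more transparent. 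You also spell out why the $V$ produced in the structural claim can be taken closed in $\Zg_D$ (image of a relatively closed set under the homeomorphism, plus $\Zg(\mcal{D})$ closed in $\Zg_D$), a point the paper leaves implicit.
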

\begin{proof}
Since $V$ is closed, so is $V_1:=\{N\in \Zg^{tf}_\Lambda \st \mathbb{F}N\in V\}$. Let $X$ be the set of $S\in \Zg_{Q\Lambda}$ such that $\mcal{V}(S)\cap V_1\subseteq \Zg_{\Lambda Q}$. Then $\overline{V}\subseteq V_1\cap\bigcap_{S\in X}(\Zg_\Lambda^{tf}\backslash\mcal{V}(S))$. Suppose that $N\in V_1\cap\bigcap_{S\in X}(\Zg_\Lambda^{tf}\backslash\mcal{V}(S))$. If $N\in \left(\nf{x=x}{\pi|x}\right)$ and $N\in V_1$ then $N\in V_0\subseteq \overline{V}$. If $N\in V_1\cap\bigcap_{S\in X}(\Zg_\Lambda^{tf}\backslash\mcal{V}(S))$ and $N\in\Zg_{Q\Lambda}$ then $\mcal{V}(N)\cap V_1$ contains an element $N'$ in $V_0$. By definition of $\mcal{V}(N)$, $N$ is in the closure of $N'$. For any closed set $C\subseteq \Zg_\Lambda^{tf}$, $\overline{C\cap \left(\nf{x=x}{\pi|x}\right)}$ is equal to $\overline{V}$ for some $V\subseteq \Zg_{D}$ and $C\backslash\overline{C\cap \left(\nf{x=x}{\pi|x}\right)}\subseteq \Zg_{Q\Lambda}$.
\end{proof}

\subsection{Pseudogeneric modules}\label{SecPsG}
A module over an arbitrary ring $S$ is \textbf{generic} if it is indecomposable, of finite length over its endomorphism ring and not finitely presented. A module $M$ is finite length over its endomorphism ring if and only if $\pp_S^n(M)$ is finite length for some, equivalently all, $n\in\N$ (see \cite[4.4.25]{PSL}). This implies that $M$ is pure-injective.
An $R$-torsion-free $\Lambda$-module is finite length over its endomorphism ring if and only if it is $R$-divisible. This is because for all $n\in\N$, $M\pi^n$ is an $\End(M)$-submodule of $M$ and since $M$ is $R$-torsion-free $M\pi^n=M\pi^{n+1}$ implies $M\pi=M$. Although the indecomposable divisible modules are important, they do not play the same role for lattices over orders as generic modules play for finitely presented modules over Artin algebras. For instance, \cite[Thm 9.6]{CBfinendo}, if $S$ is an Artin algebra then there exists a generic $S$-module if and only if for some $n\in\N$, there are infinitely many non-isomorphic indecomposable $S$-modules of endolength $n$. However, there are always finitely many indecomposable $R$-divisible $R$-torsion-free $\Lambda$-modules.\looseness=-1

We propose the notion of a pseudogeneric $R$-torsion-free $\Lambda$-module as a replacement for the generic modules over Artin algebras. To motivate this definition we first need to summarise some results around Maranda's theorem. Since $Q\Lambda$ is separable, by \cite[29.5]{CurtisReiner} and the discussion just after \cite[30.12]{CurtisReiner}, there exists $k\in \N$ such that for all $L,M\in\Latt_\Lambda$, $\pi^k\Ext(L,M)=0$. Let $k_0$ be the least such $k$. Under the hypothesis of this section, Maranda's theorem, \cite[30.14 \& 30.19]{CurtisReiner}, states that for any $k\geq k_0+1$, a pair of $\Lambda$-lattices $M,N$ are isomorphic if and only if $M/M\pi^k\cong L/L\pi^k$. Moreover, if $M$ is an indecomposable $\Lambda$-lattice then so is $M/M\pi^k$. A version of this result holds for $R$-reduced $R$-torsion-free pure-injective $\Lambda$-modules.

\begin{theorem}\cite[3.4, 3.5 \& 5.1]{Maranda}\label{Marandapi}
Let $N,M\in\Tf_\Lambda$ be $R$-reduced and pure-injective and let $k\geq k_0+1$. If $N$ is indecomposable then $N/N\pi^k$ is indecomposable and if $N/N\pi^k\cong M/M\pi^k$ then $N\cong M$. Moreover, the map which sends $N\in\left(x=x/\pi|x\right)\subseteq \Zg_\Lambda^{tf}$ to $N/N\pi^k\in \Zg_{\Lambda_k}$ induces a homeomorphism onto its image which is closed.
\end{theorem}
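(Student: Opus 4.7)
The plan is to apply Theorems \ref{fullonfpimpliesfullonpureinjectives} and \ref{fpiZghomeo} to the reduction functor $F:\Tf_\Lambda\rightarrow \Mod\text{-}\Lambda_k$ defined by $M\mapsto M/M\pi^k$, where $\Lambda_k:=\Lambda/\Lambda\pi^k$. This is an interpretation functor with underlying pp-pair $(x=x)/\Theta_{\Lambda\pi^k}(x)$ and $\Lambda_k$-action encoded by the pp-formulae $\rho_{\bar\lambda}(x,y):y=x\lambda$. For $M\in\Tf_\Lambda$, $FM=0$ if and only if $M=M\pi^k$, which by $R$-torsion-freeness is equivalent to $M$ being $R$-divisible. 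Hence $\ker F\cap\Tf_\Lambda$ is the definable subcategory of $R$-divisible $R$-torsion-free $\Lambda$-modules, and consequently $\Zg(\Tf_\Lambda)\setminus\ker F=\left(\nf{x=x}{\pi|x}\right)\cap\Zg_\Lambda^{tf}$.

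Next I would verify the hypotheses of \ref{fullonfpimpliesfullonpureinjectives} with $\mcal{X}=\Latt_\Lambda$, $\mcal{C}=\Tf_\Lambda$, and $\mcal{D}=\Mod\text{-}\Lambda_k$. The subcategory $\Latt_\Lambda\subseteq\mod\text{-}\Lambda$ is covariantly finite (via the approximations $L\rightarrow L/\text{tor}(L)$ noted earlier in the section), $\langle\Latt_\Lambda\rangle=\Tf_\Lambda$, and $F$ plainly sends $\Latt_\Lambda$ into $\mod\text{-}\Lambda_k$. The only non-formal condition is fullness of $F|_{\Latt_\Lambda}$, which is the classical lifting half of Maranda's theorem for lattices: for $k\geq k_0+1$ and $L,M\in\Latt_\Lambda$, every $\Lambda_k$-homomorphism $L/L\pi^k\rightarrow M/M\pi^k$ lifts to a $\Lambda$-homomorphism $L\rightarrow M$. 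This is standard and rests ultimately on $\pi^{k_0}\Ext^1_\Lambda(L,-)=0$ for lattices; it is the step I expect to be the main external input, but it is well documented in the references cited for the theorem. Granting it, \ref{fullonfpimpliesfullonpureinjectives} yields that $F$ is full on pure-injectives.

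With $F$ full on pure-injectives, Theorem \ref{fpiZghomeo} immediately produces a homeomorphism from $\left(\nf{x=x}{\pi|x}\right)\cap\Zg_\Lambda^{tf}$ onto its image in $\Zg_{\Lambda_k}$, which is closed; this is the last assertion of the theorem. The first two assertions are then formal. The homeomorphism bijects isomorphism classes of indecomposable pure-injectives in its source and image, so $N\in\left(\nf{x=x}{\pi|x}\right)\cap\Zg_\Lambda^{tf}$ has $N/N\pi^k$ indecomposable pure-injective over $\Lambda_k$. If $M\in\Tf_\Lambda$ is another $R$-reduced pure-injective with $N/N\pi^k\cong M/M\pi^k$, then $M$ is itself indecomposable: any nontrivial decomposition $M=M_1\oplus M_2$ into $R$-reduced summands would produce a nontrivial decomposition of $FM$, since each $M_i\notin\ker F$. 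The Ziegler homeomorphism therefore delivers $N\cong M$, completing the argument.
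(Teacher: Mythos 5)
Your plan hinges on the claim that the reduction functor $F:M\mapsto M/M\pi^k$ is full on $\Latt_\Lambda$, and that claim is false. The paper itself says, a few lines below the theorem you are proving: ``Although Maranda's functor is not full (even on pure-injectives) it preserves the structure on the interval $[\pi|x,x=x]$.'' So the very hypothesis you want to feed into Theorem~\ref{fullonfpimpliesfullonpureinjectives} fails, and consequently \ref{fpiZghomeo} cannot be invoked for this functor. The point you call ``the classical lifting half of Maranda's theorem'' is not what Higman/Maranda gives. From the exact sequence
\[
\Hom_\Lambda(L,M)\longrightarrow \Hom_\Lambda(L,M/M\pi^k)\longrightarrow \Ext^1_\Lambda(L,M\pi^k)\cong\Ext^1_\Lambda(L,M),
\]
and the fact that $\pi^{k_0}$ annihilates $\Ext^1_\Lambda(L,M)$, you only get that $\pi^{k_0}\bar f$ lifts to a genuine $\Lambda$-homomorphism $L\to M$, not that $\bar f$ itself does. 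Maranda's theorem about isomorphisms is recovered by lifting $\pi^{k_0}\bar f$ and then using Nakayama together with the condition $k\geq k_0+1$ to upgrade; it is not a statement that the reduction functor is full.

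Because fullness fails, the machinery of Section~\ref{Fullonpi} is not directly applicable here, which is precisely why the theorem is imported from \cite{Maranda} rather than derived as a corollary of \ref{fullonfpimpliesfullonpureinjectives} and \ref{fpiZghomeo}. What Maranda's functor does preserve is the interval $[\pi+\Lambda\pi^k|x,\,x=x]$ (Proposition~\ref{Marintiso}), which is strictly smaller than the full interval $[\Lambda\pi^k|x,\,x=x]$ that would correspond to the underlying pp-pair of $F$; by \ref{latticeiso} (equivalently \cite[3.17]{Intmodinmod}) surjectivity on the latter is what would characterise fullness on pure-injectives, and it does not hold. The argument in \cite{Maranda} therefore has to work with this smaller interval isomorphism and Ziegler-spectrum bookkeeping rather than with the blanket homeomorphism of \ref{fpiZghomeo}. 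Your final paragraph (indecomposability and uniqueness) would be fine if the homeomorphism were available, but it rests entirely on the incorrect fullness claim.

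Separately, note that a functor that genuinely is full on $\Latt_\Lambda$ and fits the template you describe is $\mathbb{F}$ of Ringel--Roggenkamp, and that is exactly how the paper applies Section~\ref{Fullonpi}; it is not the $\pi^k$-reduction functor.
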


For $k\geq k_0+1$, we will refer to the functor taking $M\in \Tf_\Lambda$ to $M/M\pi^k$ as Maranda's functor. When the exact value of $k\geq k_0+1$ is not important we will write $\mathbb{J}$ for this functor. Although Maranda's functor is not full (even on pure-injectives) is preserves the structure on the interval $[\pi|x,x=x]$.   The next proposition is part of \cite[4.6]{Tfpart}.

\begin{proposition}\label{Marintiso}
The map from $[\pi+\Lambda\pi^k|x,x=x]_{\langle I\Tf_\Lambda\rangle}$ to $[\pi|x,x=x]_{\Tf_\Lambda}$ induced by Maranda's functor $\Tf_\Lambda\rightarrow \Mod\text{-}\Lambda_k$ is an isomorphism. In particular, for any $M\in\Tf_\Lambda$, the induced map from $[\pi+\Lambda\pi^k|x,x=x]_{M/M\pi^k}$ to $[\pi|x,x=x]_M$ is surjective.
\end{proposition}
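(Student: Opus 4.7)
The plan is to derive the statement from Lemma \ref{latticeiso} applied to Maranda's functor $\mathbb{J}:\Tf_\Lambda\to\Mod\text{-}\Lambda_k$, where $\Lambda_k=\Lambda/\Lambda\pi^k$. First I would observe that $\mathbb{J}$ is an interpretation functor with underlying pp-pair $\nicefrac{x=x}{\pi^k|x}$ and with $r+\Lambda\pi^k\in\Lambda_k$ acting by multiplication by any lift $r\in\Lambda$. Since $\langle\mathbb{J}\Tf_\Lambda\rangle$ is definable by construction, Lemma \ref{latticeiso} yields an injective lattice homomorphism
\[
\pp^1_{\Lambda_k}(\langle\mathbb{J}\Tf_\Lambda\rangle)\hookrightarrow[\pi^k|x,x=x]_{\Tf_\Lambda},\qquad \sigma\mapsto\sigma',
\]
characterised by $\sigma'(M)/M\pi^k=\sigma(M/M\pi^k)$ for $M\in\Tf_\Lambda$.

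Next I would restrict this map to the subintervals appearing in the statement. If $\sigma\geq\pi+\Lambda\pi^k|x$ in the source then $\sigma'(M)\supseteq M\pi$ for every $M\in\Tf_\Lambda$, so the image of the restriction lies in $[\pi|x,x=x]_{\Tf_\Lambda}$; injectivity is inherited from the injectivity of the full map. The ``in particular'' clause of the statement will then follow formally once the restricted map is known to be an isomorphism: given $\tau(M)\in[\pi|x,x=x]_M$ for specific $M$, lift $\tau$ to a representative in $[\pi|x,x=x]_{\Tf_\Lambda}$, produce $\sigma$ via the isomorphism, and observe that $\sigma(M/M\pi^k)=\sigma'(M)/M\pi^k=\tau(M)/M\pi^k$ pulls back to $\tau(M)$ along the quotient $M\to M/M\pi^k$.

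The main obstacle is therefore surjectivity of the restricted map. Given $\tau\in[\pi|x,x=x]_{\Tf_\Lambda}$ written $\exists\mathbf{y}\,(x,\mathbf{y})A=0$ for a matrix $A$ over $\Lambda$, the natural candidate $\sigma$ is the same formula with $A$ replaced by its reduction modulo $\Lambda\pi^k$. The easy inclusion $\tau(M)/M\pi^k\subseteq\sigma(M/M\pi^k)$ comes from reducing solutions. The reverse inclusion requires lifting an approximate solution $(\bar m,\bar{\mathbf{n}})$ in $M/M\pi^k$ to an exact solution in $M$ whose first coordinate differs from $m$ by an element of $M\pi^k$. The key algebraic input from $\tau\geq\pi|x$, applied at $\Lambda\in\Tf_\Lambda$, is the existence of $\mu_0\in\Lambda^l$ with $\pi A_1+\mu_0 A_{\geq 2}=0$, where $A_1$ is the first row of $A$ and $A_{\geq 2}$ the remaining rows; this identity cancels errors of the form $v\pi A_1$ by adjusting the witness tuple by $v\mu_0$. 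An iterative argument that divides errors lying in $M^r\pi^j$ by $\pi$ (using $R$-torsion-freeness of $M$) and uses $\mu_0$ to rebalance should reduce the error stepwise until it is absorbed into $M\pi^k$.

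I expect the syntactic bookkeeping for this iteration to be the genuinely delicate part. If a direct execution proves awkward, a cleaner alternative is to embed $M$ purely into a pure-injective module $\widehat M\in\Tf_\Lambda$, encode the required correction as a finitely satisfiable system of cosets of pp-definable subgroups, and solve it by algebraic compactness of $\widehat M$. The conclusion transfers from $\widehat M$ to all of $\Tf_\Lambda$ because pp formulas on a definable subcategory are determined by its pure-injective objects.
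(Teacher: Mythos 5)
The paper itself does not prove this proposition: it is cited directly as part of \cite[4.6]{Tfpart}, with the remark that it holds despite Maranda's functor failing to be full on pure-injectives. So there is no in-paper proof against which to check your argument; your attempt must be judged on its own.

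Your reduction of the problem is sound and matches what one would expect: well-definedness and injectivity of the lattice map follow from Lemma \ref{latticeiso} applied to $\mathbb{J}$ with codomain $\langle\mathbb{J}\Tf_\Lambda\rangle$, and you correctly note that the surjectivity half of Lemma \ref{latticeiso} is unavailable because $\mathbb{J}$ is not full on pure-injectives. Your derivation of the ``in particular'' clause from the main isomorphism is also fine. The trouble is that surjectivity of the restricted map --- the entire content of the proposition --- is not actually established. You offer two sketches and explicitly flag the iteration as ``the genuinely delicate part,'' but more seriously, neither sketch engages with the hypothesis $k\geq k_0+1$ that is built into the definition of Maranda's functor. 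The constant $k_0$ is characterised by $\pi^{k_0}\Ext^1_\Lambda(L,M)=0$ for lattices $L,M$, and it is precisely this bound that gives the lifting statements (Maranda's theorem) that make $M\mapsto M/M\pi^k$ behave so rigidly; an argument for surjectivity that never invokes it should be regarded with suspicion, since nothing in your outline would break for small $k$.

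Concretely, the $\mu_0$-rebalancing step does not work as stated. From $\tau\geq\pi|x$ applied at $\Lambda$ you get $(\pi,\mu_0)A=0$, which witnesses $\pi\in\tau(\Lambda)$, but the defect $\mathbf{e}=(m,\mathbf{n})A\in (M\pi^k)^r$ is an arbitrary element of $(M\pi^k)^r$ and is not visibly of the form $v\pi A_1$ or any combination that the single relation $\pi A_1+\mu_0 A_{\geq 2}=0$ can absorb. To cancel $\mathbf{e}$ one needs it to lie in $\{(m',\mathbf{z})A : m'\in M\pi,\ \mathbf{z}\in M^l\}$, and it is exactly the lifting supplied by $\pi^{k_0}\Ext^1(B,M)=0$ (with $(B,b)$ the free realisation of $\tau$, which one may take to be a $\Lambda$-lattice after killing torsion) that supplies such a decomposition; your sketch never brings $B$ or the $\Ext$-vanishing into play. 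The pure-injective alternative is likewise too vague to check: you would need to say what the finitely satisfiable system is and why the Maranda bound guarantees finite satisfiability. In short, the plan correctly isolates what must be shown, but the part that carries the weight is missing, and the one hypothesis that must be used is never touched.
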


We say $M\in\Tf_\Lambda$ is \textbf{pseudogeneric} if $M$ is indecomposable, pure-injective, $R$-reduced and $M/M\pi^{k_0+1}$ is finite-length over its endomorphism ring. For $M\in \Tf_\Lambda$, we define the \textbf{pseudoendolength} of $M$ to be the length of $[\pi|x,x=x]_M$. Note that this means that $R$-divisible $\Lambda$-modules have pseudoendolength $0$.\looseness=-1

\begin{lemma}
Let $M\in\Tf_\Lambda$ and let $k\geq k_0+1$.
\begin{enumerate}
\item The endolength of $M/M\pi^k$ is equal to $k$ times the pseudoendolength.
\item The module $M$ is pseudogeneric if and only if it is indecomposable, pure-injective, $R$-reduced and has finite pseudoendolength.
\end{enumerate}
\end{lemma}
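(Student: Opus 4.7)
I will prove (1) directly by computing the length of $\pp^1_\Lambda(M/M\pi^k)$ as a modular lattice, and then deduce (2) immediately: by the remark recalled just before the definition of pseudogeneric, a module has finite endolength if and only if its pp-lattice has finite length, so $M/M\pi^{k_0+1}$ has finite endolength iff, by (1), $(k_0+1)$ times the pseudoendolength of $M$ is finite iff the pseudoendolength of $M$ is finite.

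For (1), set $N:=M/M\pi^k$; if $M$ is $R$-divisible both sides vanish, so I may assume $M\neq M\pi$. The pp-formulas $\pi^i|x$ give a chain of pp-definable subgroups
\[N=(x=x)(N)\supseteq(\pi|x)(N)\supseteq\cdots\supseteq (\pi^k|x)(N)=0\]
in the modular lattice $\pp^1_\Lambda(N)$, so the length of $\pp^1_\Lambda(N)$ equals the sum over $0\leq i\leq k-1$ of the lengths of the intervals $[\pi^{i+1}|x,\pi^i|x]_N$. The crux is to show that each such interval is isomorphic, as a lattice, to $[\pi|x,x=x]_N$ via $\sigma\mapsto\sigma_i$, where $\sigma_i(x):=\exists y(x=y\pi^i\wedge \sigma(y))$. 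The key input is that, since $M$ is $R$-torsion-free, multiplication by $\pi^i$ on $N$ has kernel exactly $N\pi^{k-i}$. For $\sigma\in[\pi|x,x=x]$, the containments $\sigma(N)\supseteq N\pi\supseteq N\pi^{k-i}$ give $\sigma_i(N)\cong\sigma(N)/N\pi^{k-i}$, from which injectivity of the assignment is immediate; for surjectivity, given $\rho\in[\pi^{i+1}|x,\pi^i|x]_N$ I take $\sigma(x):=\rho(x\pi^i)$ and verify that $\sigma\in[\pi|x,x=x]_N$ and $\sigma_i(N)=\rho(N)$. Preservation of joins is immediate and preservation of meets uses the same kernel computation (if $a\pi^i=b\pi^i$ with $a\in\sigma(N)$, $b\in\tau(N)$, then $a-b\in N\pi^{k-i}\subseteq\tau(N)$, so $a\in\sigma(N)\cap\tau(N)$). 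Thus $\mathrm{length}\,\pp^1_\Lambda(N)=k\cdot\mathrm{length}\,[\pi|x,x=x]_N$.

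It remains to identify $[\pi|x,x=x]_N$ with $[\pi|x,x=x]_M$. Proposition \ref{Marintiso} supplies a surjective lattice homomorphism in this direction. Injectivity on the specific module $M$ follows from Lemma \ref{latticeiso} applied to Maranda's functor, whose underlying pp-pair is $x=x/\pi^k|x$: for $\sigma\geq\pi|x$, the lift $\sigma'$ satisfies $\sigma(N)=\sigma'(M)/M\pi^k$ (using $M\pi^k\subseteq M\pi\subseteq\sigma'(M)$), so $\sigma'(M)$ determines $\sigma(N)$ and vice versa. Combining yields $\mathrm{length}\,\pp^1_\Lambda(N)=k\cdot\mathrm{length}\,[\pi|x,x=x]_M$, as required. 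The main obstacle I anticipate is the careful verification that $\sigma\mapsto\sigma_i$ is a lattice isomorphism (and not merely a monotone bijection), especially meet preservation, where $R$-torsion-freeness of $M$ is used essentially to control the kernel of multiplication by $\pi^i$; the infinite-pseudoendolength case is handled automatically, since the surjection of \ref{Marintiso} then forces both sides to be infinite.
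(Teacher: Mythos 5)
Your proof is correct and takes essentially the same approach as the paper's: decompose $\pp^1_{\Lambda_k}(M/M\pi^k)$ along the chain $N\supseteq N\pi\supseteq\cdots\supseteq N\pi^k=0$, show each subinterval is isomorphic to $[\pi|x,x=x]_N$, and identify $[\pi|x,x=x]_N$ with $[\pi|x,x=x]_M$ via Maranda's functor and \ref{Marintiso}. You spell out the interval isomorphisms (including the torsion-freeness computation that the kernel of multiplication by $\pi^i$ on $N$ is $N\pi^{k-i}$) in more detail than the paper, which merely asserts them.
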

\begin{proof}
The second statement follows directly from the first. To prove $(1)$, take $M\in \Tf_\Lambda$. Let $p:=\pi+\Lambda\pi^k$. For each $1\leq i\leq k-1$, $[p^{i+1}|x,p^i|x]_{M/M\pi^k}$ is isomorphic to $[p|x,x=x]_{M/M\pi^k}$. Hence, since $p^k|x$ is equivalent to $x=0$ in $M/M\pi^k$, the length of $\pp_{\Lambda/\Lambda\pi^k}(M/M\pi^k)$ is equal to $k$ times the length of $[p|x,x=x]_{M/M\pi^k}$. By \ref{Marintiso}, $[p|x,x=x]_{M/M\pi^k}$ is isomorphic to $[\pi|x,x=x]_M$. Therefore $k$ times the pseudoendolength is equal to the length of $\pp_{\Lambda/\Lambda\pi^k}(M/M\pi^k)$. By \cite[4.4.25]{PSL}, the length of $\pp_{\Lambda/\Lambda\pi^k}(M/M\pi^k)$ is equal to the endolength of $M/M\pi^k$.
\end{proof}


Indecomposable modules of finite length over their endomorphism ring are closed points of the Ziegler spectrum, \cite[5.1.12]{PSL}. Thus by \ref{Marandapi} and \cite[2.2 \& 2.4]{Tfpart} we get the following remark.
\begin{remark}
If $M\in\Tf_\Lambda$ is pseudogeneric then it is closed and not isolated in the open set $\left(x=x/\pi|x\right)$ equipped with the subspace topology.
\end{remark}


\begin{lemma}\label{psgentopprop}
Suppose that one of the following conditions hold.
\begin{enumerate}
\item $R/R\pi$ is countable.
\item The isolation condition holds for $\Lambda/\Lambda\pi^{k_0+1}$.
\item $\pp_\Lambda^{1}(\Tf_\Lambda)$ has m-dimension.
\end{enumerate}
An indecomposable pure-injective module $M$ is pseudogeneric if and only if it is closed and not isolated in the open set $\left(x=x/\pi|x\right)$ equipped with the subspace topology.
\end{lemma}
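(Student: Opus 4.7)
The forward direction is supplied by the preceding remark. For the converse, my plan is to use Maranda's theorem \ref{Marandapi} to transfer topological information about $M$ in $U$ to topological information about $M/M\pi^{k_0+1}$ in $\Zg_{\Lambda_{k_0+1}}$, and then to invoke the isolation condition (which each of the three hypotheses will supply) to extract the finite endolength required for pseudogenericity.

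In detail, suppose $M$ is an indecomposable pure-injective $\Lambda$-module, closed and not isolated in $U := \left(\nf{x=x}{\pi|x}\right)$. Since $M\in U$, $M$ is $R$-reduced, so it remains only to check that $M/M\pi^k$ has finite length over its endomorphism ring, where $k := k_0+1$. By \ref{Marandapi}, Maranda's functor $\mathbb{J}$ induces a homeomorphism from $U$ onto a closed subset $C\subseteq \Zg_{\Lambda_k}$, sending $M$ to $M/M\pi^k$. Hence $\{M/M\pi^k\}$ is closed in $C$ and therefore closed in $\Zg_{\Lambda_k}$; viewed as a closed subset in its own right, this singleton trivially isolates its unique point. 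Under the isolation condition for (the definable subcategory generated by) $\Lambda_k$, such an isolated point of a closed set must be isolated by a minimal pp-pair in the ambient pp-lattice, and isolation of an indecomposable pure-injective by a minimal pp-pair forces finite endolength (see \cite[\S5.3]{PSL}). So the task reduces to verifying the isolation condition in each of the three cases.

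Under (1), $R/R\pi^k$ is countable as a finite iterated extension of $R/R\pi$, so $\Lambda_k$ is finitely generated as a module over a countable ring and hence countable; the isolation condition holds over any countable ring. Case (2) assumes the isolation condition for $\Lambda_k$ outright. Under (3), Proposition \ref{Marintiso} identifies the interval $[\pi+\Lambda\pi^k|x,\ x=x]$ in $\pp_{\Lambda_k}^1(\langle \mathbb{J}\Tf_\Lambda\rangle)$ with the interval $[\pi|x,\ x=x]$ in $\pp_\Lambda^1(\Tf_\Lambda)$, and the latter inherits m-dimension as a sublattice of $\pp_\Lambda^1(\Tf_\Lambda)$. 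I then extend this to the full pp-lattice of $\langle \mathbb{J}\Tf_\Lambda\rangle$ by exploiting the filtration $x \geq \pi|x \geq \pi^2|x \geq \cdots \geq \pi^k|x$ (which terminates because $x\pi^k = 0$ on $\Lambda_k$-modules): the lattice decomposes into finitely many slices, each isomorphic to an interval above $\pi|x$, so the whole lattice has m-dimension, and m-dimension is well known to imply the isolation condition.

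Finally, the \emph{not isolated} hypothesis is used to guarantee that $M$ is not an indecomposable $\Lambda$-lattice: by \cite[2.2, 2.4]{Tfpart} the isolated points of $\Zg_\Lambda^{tf}$ are exactly the indecomposable lattices, and because $U$ is open in $\Zg_\Lambda^{tf}$, a point of $U$ is isolated in $U$ if and only if it is isolated in $\Zg_\Lambda^{tf}$. The hardest step in this plan will be case (3): carefully transferring m-dimension from $\pp_\Lambda^1(\Tf_\Lambda)$ to the pp-lattice of $\Lambda_k$-modules in $\langle \mathbb{J}\Tf_\Lambda\rangle$. The isomorphism in \ref{Marintiso} handles the top interval above $\pi|x$, and the finite $\pi$-filtration sketched above should propagate m-dimension through the whole lattice; the rest of the proof is routine consequences of the isolation condition and Maranda's theorem.
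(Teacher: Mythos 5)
Your proof is correct, and for the forward direction and for cases (1) and (2) it coincides with the paper's: the paper likewise notes (1) $\Rightarrow$ (2) via countability of $\Lambda/\Lambda\pi^{k_0+1}$, pushes $M$ through Maranda's functor to get a closed point of $\Zg_{\Lambda_{k_0+1}}$, and invokes the isolation condition (PSL 5.3.23) to get finite endolength, hence finite pseudoendolength. Where you genuinely diverge is case (3). The paper stays entirely on the $\Lambda$-side: by PSL 5.3.17 the existence of m-dimension for $\pp^1_\Lambda(\Tf_\Lambda)$ already gives the isolation condition for $\Zg_\Lambda^{tf}$ itself, and then PSL 4.3.22 applied to the point $N$ closed in the open set $\left(x=x/\pi|x\right)$ yields directly that $[\pi|x,x=x]_N$ is finite length, i.e.\ finite pseudoendolength --- no transfer to $\Lambda_{k_0+1}$ is needed. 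You instead propagate m-dimension across Maranda's functor to $\pp^1$ of $\langle\mathbb{J}\Tf_\Lambda\rangle$ via \ref{Marintiso} and the finite $p$-adic filtration (with $p=\pi+\Lambda\pi^{k_0+1}$), thereby reducing (3) to the isolation-condition argument of case (2). This works --- the slice isomorphisms $[p^{i+1}|x,p^i|x]\cong[p|x,x=x]$ are uniform over modules of the form $M/M\pi^{k}$ with $M$ torsion-free (it is the same computation as in the paper's lemma relating endolength to pseudoendolength), and m-dimension of a lattice follows from m-dimension of the intervals of a finite chain --- but it is more work than necessary, and note that what you obtain is the isolation condition for the definable subcategory $\langle\mathbb{J}\Tf_\Lambda\rangle$ rather than for all of $\Mod\text{-}\Lambda_{k_0+1}$; that is enough for your purposes since the relevant closed singleton lies in $\Zg(\langle\mathbb{J}\Tf_\Lambda\rangle)$, but it is worth saying explicitly. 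One further caution: $[\pi|x,x=x]$ inherits m-dimension from $\pp^1_\Lambda(\Tf_\Lambda)$ because it is an \emph{interval}, not merely a sublattice --- m-dimension need not pass to arbitrary sublattices, so phrase that step accordingly.
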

\begin{proof}
The forwards direction is always true. Recall that the isolated points of $\Zg_\Lambda^{tf}$ are $\Lambda$-lattices and that $N\in \left(x=x/\pi|x\right)$ if and only if $N$ is $R$-reduced. Thus, under the hypothesis that (1), (2) or (3) holds, we just need to show that if $N$ is closed in $\left(x=x/\pi|x\right)$ then $N$ has finite pseudoendolength. If $R/R\pi$ is countable then so is $\Lambda/\Lambda\pi^{k_0+1}$. Thus $(1)$ implies $(2)$. Suppose that $(2)$ holds and $N\in \left(x=x/\pi|x\right)$ is closed in $\left(x=x/\pi|x\right)$. Then $N/N\pi^{k_0+1}$ is closed in $\Zg_{\Lambda/\Lambda\pi^{k_0+1}}$. Since the isolation condition holds, by \cite[5.3.23]{PSL}, $N/N\pi^{k_0+1}$ is of finite endolength. Hence $N$ is of finite pseudoendolength. If $(3)$ holds then, by \cite[5.3.17]{PSL}, the isolation condition holds for $\Zg_\Lambda^{tf}$. Hence \cite[4.3.22]{PSL} implies that if $N$ is closed in $\left(x=x/\pi|x\right)$ then $[\pi|x,x=x]_N$ is finite length as required.\looseness=-1
\end{proof}

The proof of the next proposition copies Herzog's proof, \cite[9.6]{Zgspecloccoh}, of the analogous result for Artin algebras.

\begin{proposition}
If there are infinitely many non-isomorphic indecomposable $\Lambda$-lattices of pseudoendolength $l$ then there exists a pseudogeneric of pseudoendolength $\leq l$.
\end{proposition}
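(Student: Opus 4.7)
The plan is to transfer the infinite family across Maranda's functor $\mathbb{J}\colon N\mapsto N/N\pi^{k}$ (with $k:=k_0+1$) to the Artin algebra $\Lambda_k:=\Lambda/\Lambda\pi^k$, and to run a compactness argument \emph{inside} the closed image of $\mathbb{J}$ rather than inside $\Zg_{\Lambda_k}$ as a whole, so that the non-finitely-presented point that compactness produces automatically lifts back to a pseudogeneric in $\Tf_\Lambda$.

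Concretely, let $\{L_i\}_{i\in\N}$ be pairwise non-isomorphic indecomposable $\Lambda$-lattices of pseudoendolength $l$. By the preceding lemma, each $\mathbb{J}(L_i)$ is an indecomposable $\Lambda_k$-module of endolength $lk$, and by \ref{Marandapi} the $\mathbb{J}(L_i)$ are pairwise non-isomorphic and lie in the closed subspace $X:=\mathbb{J}\bigl(\left(\nf{x=x}{\pi|x}\right)\bigr)\subseteq \Zg_{\Lambda_k}$. Since the indecomposable $\Lambda$-lattices are isolated in $\Zg_\Lambda^{tf}$ and $\mathbb{J}$ restricts to a homeomorphism onto $X$, each $\mathbb{J}(L_i)$ is an isolated point of $X$.

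Let $Y\subseteq \Zg_{\Lambda_k}$ be the set of indecomposable pure-injectives of endolength $\leq lk$, equivalently (by \cite[4.4.25]{PSL}) with $\pp^1_{\Lambda_k}$-lattice of length $\leq lk$. Then $Y$ is closed in $\Zg_{\Lambda_k}$: if $M\notin Y$, choose pp-formulas $\psi_0<\psi_1<\cdots<\psi_{lk+1}$ with strict inclusions in $M$, whereupon $M\in \bigcap_{i=0}^{lk}\left(\nf{\psi_{i+1}}{\psi_i}\right)$ is an open neighbourhood of $M$ disjoint from $Y$. Hence $X\cap Y$ is closed in the compact space $\Zg_{\Lambda_k}$, so is itself compact, and it contains the infinite set $\{\mathbb{J}(L_i)\}$, each of whose points is isolated in $X$ and therefore in $X\cap Y$. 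Because infinite discrete spaces are not compact, $X\cap Y$ must contain some $G$ that is not isolated in $X\cap Y$.

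As $G\in X$, Theorem \ref{Marandapi} gives an indecomposable pure-injective $N\in\left(\nf{x=x}{\pi|x}\right)\subseteq \Zg_\Lambda^{tf}$ with $\mathbb{J}(N)\cong G$; so $N$ is $R$-reduced, and since $G\in Y$ the preceding lemma forces $N$ to have pseudoendolength $\leq l$. If $N$ were a $\Lambda$-lattice then $\{N\}$ would be open in $\left(\nf{x=x}{\pi|x}\right)$, so $\{G\}$ would be open in $X$ and a fortiori in $X\cap Y$, contradicting the choice of $G$. Hence $N$ is the desired pseudogeneric of pseudoendolength $\leq l$. The main obstacle is forcing the limit point produced by compactness to lie in the image of Maranda; the fix is to work inside the closed subspace $X\cap Y$ throughout and to use that $\Lambda$-lattices are isolated in $\Zg_\Lambda^{tf}$ to rule out $N$ being a lattice.
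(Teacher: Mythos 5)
Your argument is correct. It is the same compactness argument as the paper's: infinitely many isolated points of bounded pseudoendolength sitting inside a compact set force a non-isolated point, which is then the desired pseudogeneric. The only difference is in execution: you transfer everything through Maranda's functor $\mathbb{J}$ and work inside the closed image $X\subseteq\Zg_{\Lambda_k}$, using compactness of $\Zg_{\Lambda_k}$ and proving by hand that the endolength-$\leq lk$ points form a closed set. The paper stays in $\Zg_\Lambda^{tf}$: it quotes Herzog's result that for any pp-pair $\phi/\psi$ the set of $N$ with $[\psi,\phi]_N$ of length $\leq n$ is closed, applies it to the pair $(x=x)/(\pi|x)$ to see that the set $\mcal{C}_l$ of points of pseudoendolength $\leq l$ is closed, and uses compactness of the basic open set $\left(\nf{x=x}{\pi|x}\right)$ directly. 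Your detour through $\Lambda_k$ is harmless but unnecessary; its one merit is that the closedness argument you give for $Y$ is self-contained, whereas the paper cites it. Your final step ruling out $N$ being a lattice (via isolation) matches the paper's use of the fact that indecomposable lattices are isolated.
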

\begin{proof}
By \cite[9.3]{Zgspecloccoh}, for any ring $S$, pp-pair $\phi/\psi$ and $n\in\N_0$, the set of $N\in\Zg_S$ such that $[\psi,\phi]_N$ has length $\leq n$ is a closed subset of $\Zg_S$.
Therefore the set $\mcal{C}_l$ of $N\in \Zg_\Lambda^{tf}$ with pseudoendolength $\leq l$ is a closed subset (note that this set also contains the divisible points). Since $\left(x=x/\pi|x\right)$ is compact, it follows that $\left(x=x/\pi|x\right)\cap \mcal{C}_l$ is compact. The indecomposable $\Lambda$-lattices are isolated in $\Zg_\Lambda^{tf}$. Thus if there are infinitely many non-isomorphic indecomposable $\Lambda$-lattices of pseudoendolength $l$ then $\mcal{C}_l \cap\left(x=x/\pi|x\right)$ contains infinitely many isolated points and hence must contain a non-isolated point $N$.
\end{proof}


Let $e_1:=\begin{pmat}1 & 0 \\ 0 & 0
\end{pmat}, e_2:=\begin{pmat}0 & 0 \\ 0 & 1\end{pmat}\in D$. For any $N\in\Mod\text{-}D$, $\pp_D^1(N)$ is finite length if and only if $[x=0,e_1|x]_N$ and $[x=0,e_2|x]_N$ are finite length because \[[e_1|x,x=x]_N=[xe_2=0,x=x]_N\cong[x=0,e_2|x]_N.\]

\begin{proposition}\label{pseudogenF}
If $N$ is pseudogeneric then $\mathbb{F}N$ is a generic module. If any of the conditions $(1),(2)$ or $(3)$ from \ref{psgentopprop} hold then if $N\in \Tf_\Lambda$ is $R$-reduced, pure-injective and $\mathbb{F}N$ is generic then $N$ is pseudogeneric.
\end{proposition}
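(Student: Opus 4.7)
The strategy is to treat each direction separately, using the homeomorphism \ref{RRhomeomorphism} as a bridge between the open subset $(x=x/\pi|x)\subseteq \Zg_\Lambda^{tf}$ and the closed subset $\Zg(\mcal{D})\subseteq \Zg_D$.

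For the forward direction, assume $N$ is pseudogeneric. The homeomorphism \ref{RRhomeomorphism} sends $N$ to an indecomposable pure-injective $\mathbb{F}N\in\Zg(\mcal{D})$. To see $\mathbb{F}N$ is not finitely presented, suppose it were: then \ref{RRFun} would supply a $\Lambda$-lattice $L$ with $\mathbb{F}L\cong\mathbb{F}N$, and since $\Lambda$-lattices are $R$-reduced pure-injective, the homeomorphism forces $L\cong N$, making $N$ isolated in $\Zg_\Lambda^{tf}$ and contradicting the fact that pseudogenerics are not isolated. For finite endolength, by \cite[4.4.25]{PSL} it suffices to show $\pp_D^1(\mathbb{F}N)$ has finite length, which by the observation immediately preceding this proposition reduces to bounding $[x=0,e_1|x]_{\mathbb{F}N}$ and $[x=0,e_2|x]_{\mathbb{F}N}$. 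Applying \ref{latticeiso} to $\mathbb{F}$ (which is full on pure-injectives and satisfies $\langle\mathbb{F}\Tf_\Lambda\rangle=\mcal{D}$ by \ref{RRpureinj}), and unwinding the descriptions of $e_1|x$ and $e_2|x$ against the underlying pp-pair of $\mathbb{F}$, these two intervals correspond to the intervals $[\Theta_{\pi^n I},\Theta_{\pi^n\Lambda}]_N$ and $[\Theta_{\pi^n I},\Theta_{\pi^n\Gamma}]_N$ inside $\pp_\Lambda^1(N)$. The ideal inclusions $\pi^l\in I$ and $\pi^n\Gamma\subseteq\Lambda$ show both intervals embed into $[\pi^{n+l}|x,x=x]_N$. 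Finally, multiplication by $\pi$ on the $R$-torsion-free module $N$ induces isomorphisms $[\pi^{i+1}|x,\pi^i|x]_N\cong[\pi|x,x=x]_N$ for each $i$, so the length of $[\pi^{n+l}|x,x=x]_N$ equals $(n+l)$ times the pseudoendolength (the same filtration argument as in the endolength-of-$N/N\pi^k$ lemma), which is finite by hypothesis.

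For the reverse direction, under any of (1), (2), (3), \ref{psgentopprop} characterises pseudogenerics as the closed, non-isolated points of $(x=x/\pi|x)$. First, $N$ is indecomposable: any decomposition $N=N_1\oplus N_2$ would give $\mathbb{F}N=\mathbb{F}N_1\oplus\mathbb{F}N_2$; since $\mathbb{F}N$ is indecomposable, one summand $\mathbb{F}N_i$ is zero, so $N_i\in\ker\mathbb{F}$ is $R$-divisible by \ref{RRpureinj}, but $N$ is $R$-reduced, forcing $N_i=0$. Next, $\mathbb{F}N$ is of finite endolength, hence a closed point of $\Zg_D$ by \cite[5.1.12]{PSL}, hence closed in $\Zg(\mcal{D})$, so by \ref{RRhomeomorphism} $N$ is closed in $(x=x/\pi|x)$. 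Finally, if $N$ were isolated in $(x=x/\pi|x)$, then since $(x=x/\pi|x)$ is open in $\Zg_\Lambda^{tf}$ it would be isolated in $\Zg_\Lambda^{tf}$ and thus a $\Lambda$-lattice, which by \ref{RRFun} would make $\mathbb{F}N$ finitely presented, contradicting genericity.

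The main obstacle is the endolength bound in the forward direction: pseudogenerics have infinite endolength as $\Lambda$-modules (finite endolength $R$-torsion-free modules are $R$-divisible), yet $\mathbb{F}N$ must have finite endolength over $D$. The bound succeeds because the pp-pair defining $\mathbb{F}$ only captures a ``bounded $\pi$-depth'' portion of $N$, sandwiched by the ideals $N\pi^n I$ and $N\pi^n$, $N\pi^n\Gamma$; this finite depth is controlled by the pseudoendolength via the ideal inclusions $\pi^l\in I$ and $\pi^n\Gamma\subseteq\Lambda$ together with the $\pi$-filtration argument on $R$-torsion-free pure-injective modules.
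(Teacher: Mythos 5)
Your proof is correct and takes essentially the same route as the paper: translate the two intervals $[x=0,e_1|x]_{\mathbb{F}N}$, $[x=0,e_2|x]_{\mathbb{F}N}$ back through the full-on-pure-injectives interpretation functor into intervals of $\pp^1_\Lambda(N)$, bound them by $\pi$-power intervals using $\pi^l\in I$ and $\pi^n\Gamma\subseteq\Lambda$, and control the latter with the pseudoendolength via the $\pi$-filtration; for the converse, transport closed/non-isolated through the homeomorphism \ref{RRhomeomorphism} and invoke \ref{psgentopprop}. The paper writes the intervals after cancelling the factor of $\pi^n$ (so $[I|x,x=x]_N$ and $[\pi^mI|x,\pi^m\Gamma|x]_N$) whereas you keep the uncancelled $\Theta$-forms and sandwich both into a single $[\pi^{n+l}|x,x=x]_N$, and you supply the (correct and worth including) explicit check that $N$ must be indecomposable in the converse direction, which the paper leaves implicit; neither difference changes the substance of the argument.
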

\begin{proof}
Suppose $N$ is pseudogeneric. Let $n\in\N$ be such that $\pi^n\in I$. Since $N$ is pseudogeneric the interval $[\pi|x,x=x]_N$ is finite length. Therefore, the interval $[\pi^l|x,x=x]_N$ is finite length for any $l\in \N$. Since $\pi^n\in I$, $\pi^n|x\leq I|x$. Hence $[I|x,x=x]_N$ is finite length. Let $m\in\N$ be such that $\pi^m\Gamma\subseteq \Lambda$. Then $[\pi^mI|x,x=x]_N$ and hence $[\pi^mI|x,\pi^m\Gamma]_N$ are finite length. Since $\mathbb{F}$ is full on pure-injectives, by \cite[3.17]{PSL}, $[I|x,x=x]_N$ is isomorphic to $[x=0,e_1|x]_{\mathbb{F}N}$ and $[\pi^mI|x,\pi^m\Gamma]_N\cong [x=0,e_2|x]_{\mathbb{F}N}$. Thus $\pp_D(\mathbb{F}N)$ is finite length. By \ref{RRpureinj}, $\mathbb{F}N$ is indecomposable. Every finitely presented module in $\mcal{D}$ is of the form $\mathbb{F}M$ for some $M\in\Latt_\Lambda$. Since $R$ is complete, $\Lambda$-lattices are pure-injective. So, if $\mathbb{F}N$ were finitely presented then $N\cong M$ for some $\Lambda$-lattice $M$. Thus we have proved the first statement.

Suppose that one of $(1),(2)$ or $(3)$ from \ref{psgentopprop} holds. Since $\mathbb{F}N$ is generic, it is closed in $\Zg(\mcal{D})$. So, by \ref{RRhomeomorphism}, $N$ is closed in $\left(\nf{x=x}{\pi|x}\right)$. Since $\mathbb{F}N$ is generic, $N$ is not a $\Lambda$-lattice. Thus $N$ is not isolated in $\left(\nf{x=x}{\pi|x}\right)$. So, by \ref{psgentopprop}, $N$ is pseudogeneric.
\end{proof}

We now specialise to the case of B\"{a}ckstr\"{o}m orders i.e. an order $\Lambda$ where there exists a hereditary over order $\Gamma\neq \Lambda$ such that $\rad(\Lambda)=\rad(\Gamma)$. In this situation we set $I=\rad(\Lambda)$. Now $\Lambda/I$ and $\Gamma/I$ are both semi-simple $R/\pi R$-algebras. Hence $D$ is a hereditary $R/\pi R$-algebra. 

\begin{remark}\label{pinjinD}
For $\Lambda$ a B\"{a}ckstr\"{o}m order, the only indecomposable pure-injective $D$-modules which are not in $\mcal{D}$ are the simple $D$-modules.
\end{remark}
%
\begin{proof}
The statement follows for indecomposable finite-dimensional $D$-modules because $\Lambda/I$ and $\Gamma/I$ are semi-simple. Since $D$ has only finitely many simple modules and all finitely presented indecomposable $D$-modules are isolated in $\Zg_D$, $\langle \mathbb{F}\Tf_\Lambda\rangle$ is the definable subcategory of $\Mod\text{-}D$ containing all indecomposable non-simple pure-injective $D$-modules.
\end{proof}
%

Following Ringel and Roggenkamp, we say that a B\"{a}ckstr\"{o}m order $\Lambda$ is \textbf{tame} (respectively \textbf{wild}) if $D$ is tame (respectively wild). It is shown in \cite[5.1]{Tfpart} that when an arbitrary order $\Lambda$ over a Dedekind domain is of finite lattice type, i.e. there are only finitely many indecomposable $\Lambda$-lattices up to isomorphism, then $\textrm{m-dim}\, \pp^1_\Lambda(\Tf_\Lambda)=1$. 

\begin{proposition}\label{mdimBaeckstroem}
If $\Lambda$ is a wild B\"{a}ckstr\"{o}m order then $\textnormal{m-dim}\,\pp^1_\Lambda\Tf_\Lambda$ is undefined.
If $\Lambda$ is a tame B\"{a}ckstr\"{o}m order of infinite lattice-type then $\textnormal{m-dim}\,\pp^1_\Lambda\Tf_\Lambda$ is $3$. 
\end{proposition}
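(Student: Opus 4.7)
The plan is to apply Proposition~\ref{bounds} to the Ringel--Roggenkamp interpretation functor $\mathbb{F}:\Tf_\Lambda\to\mcal{D}$ from Theorem~\ref{RRpureinj}, taking $\mcal{L}$ to be the class of two-element lattices so that $\mcal{L}$-dimension coincides with m-dimension. Recall that $\mathbb{F}$ is full on pure-injectives, $\langle\mathbb{F}\Tf_\Lambda\rangle=\mcal{D}$, and $\ker\mathbb{F}$ is equivalent to $\Mod\text{-}Q\Lambda$. Since $Q\Lambda$ is separable and hence semisimple, the lattice $\pp^1_\Lambda(\ker\mathbb{F})$ is of finite length, so has m-dimension $0$. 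By Remark~\ref{pinjinD}, $\mcal{D}$ contains every non-simple indecomposable pure-injective $D$-module and the finitely many omitted simples are isolated in $\Zg_D$; hence $\textnormal{m-dim}\,\pp^1_D(\mcal{D})$ agrees with the Krull--Gabriel dimension of $D$.

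In the wild case $D$ is a wild hereditary $R/\pi R$-algebra, whose Krull--Gabriel dimension is known to be undefined, so the final clause of Proposition~\ref{bounds} immediately gives that $\textnormal{m-dim}\,\pp^1_\Lambda(\Tf_\Lambda)$ is undefined. In the tame case $D$ is a tame hereditary algebra of infinite representation type, for which the Krull--Gabriel dimension equals $2$, and Proposition~\ref{bounds} yields
\[2\;\leq\;\textnormal{m-dim}\,\pp^1_\Lambda(\Tf_\Lambda)\;\leq\;2+1+0\;=\;3.\]
In particular the m-dimension is defined, so by the identification of m-dimension of $\pp^1$ with Cantor--Bendixson rank recalled in Section~\ref{Fullonpi} it equals the CB rank of $\Zg_\Lambda^{tf}$.

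To close the remaining gap, the plan is to exhibit a point of CB rank at least $3$ in $\Zg_\Lambda^{tf}$. Under the homeomorphism of Corollary~\ref{RRhomeomorphism}, the unique generic of the tame hereditary algebra $D$ (of CB rank $2$ in $\Zg_D$, hence in $\Zg(\mcal{D})$ after deletion of the finitely many isolated simple points) corresponds to a point $N_0\in\left(\nicefrac{x=x}{\pi|x}\right)\subseteq \Zg_\Lambda^{tf}$; since CB rank of a point in an open subspace coincides with its CB rank in the ambient space, $N_0$ has CB rank $2$ in $\Zg_\Lambda^{tf}$. Because $N_0 Q\neq 0$, some simple $Q\Lambda$-module $S$ is a direct summand of $N_0 Q$, so the characterisation of $\mcal{V}(S)$ recalled in subsection~\ref{ZgFgeneral} puts $N_0$ in $\mcal{V}(S)$, i.e.\ places $S$ in the closure of $N_0$. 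Since $S$ is a closed point of $\Zg_\Lambda^{tf}$ with the distinct point $N_0$ of CB rank $2$ in every one of its open neighbourhoods, $S$ has CB rank at least $3$, which together with the upper bound from Proposition~\ref{bounds} gives the exact value $3$. The main subtlety is the preservation of CB rank along the open embedding $\left(\nicefrac{x=x}{\pi|x}\right)\hookrightarrow\Zg_\Lambda^{tf}$, which follows by induction on the rank from the fact that isolation is the same in an open subspace as in the ambient space.
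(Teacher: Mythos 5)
Your proposal is correct and follows essentially the same route as the paper's proof: apply Proposition~\ref{bounds} to $\mathbb{F}$ to obtain the bounds $2\leq\textnormal{m-dim}\,\pp^1_\Lambda\Tf_\Lambda\leq 3$ (and undefinedness in the wild case), identify m-dimension with Cantor--Bendixson rank, and then lift the generic $D$-module to an $R$-reduced point of CB rank $2$ whose closure contains a simple $Q\Lambda$-module of CB rank $\geq 3$. Your remarks on why $\textnormal{m-dim}\,\pp^1_D(\mcal{D})$ agrees with the Krull--Gabriel dimension of $D$ (via Remark~\ref{pinjinD} and the isolation of the omitted simples) and on the preservation of CB rank along the open embedding are in fact slightly more explicit than the corresponding steps in the paper, but they spell out the same underlying argument.
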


%
%

\begin{proof}
If $\Lambda$ is a wild B\"{a}ckstr\"{o}m order then the m-dimension of $\pp_D^1$ is undefined. This implies that $\pp_D^1(\mcal{D})$ also has undefined m-dimension because if $\sigma\geq \tau$ is such that the Ziegler basic open set $(\sigma/\tau)$ contains no infinite dimensional modules then $[\phi,\psi]$ is finite length. Thus, by \ref{latticeiso}, the m-dimension of $\pp_\Lambda^1\Tf_\Lambda$ is undefined.

Suppose that $\Lambda$ is a tame B\"{a}ckstr\"{o}m order of infinite lattice-type. Then, \cite[4.3]{Geigle}, the m-dimension of $\pp_D^1$ is $2$ and the m-dimension of $\pp_\Lambda^1\ker \mathbb{F}$ is $0$. Thus, by \ref{bounds},  $2\leq \textnormal{m-dim}\pp^1_\Lambda\Tf_\Lambda\leq 3$. Thus, \cite[5.3.60]{PSL}, the Cantor-Bendixson (CB) rank of $\Zg_\Lambda^{tf}$ is equal to the m-dimension of $\pp^1_\Lambda(\Tf_\Lambda)$. Let $N\in \Zg_D$ be a point of CB rank $2$. The finite-dimensional $D$-modules are isolated in $\Zg_D$. So $N$ also has CB rank $2$ in the subspace of $\Zg_D$ with the simple modules removed. Let $M\in\Zg_\Lambda^{tf}$ be such that $\mathbb{F}M=N$. By \ref{RRhomeomorphism}, $M$ has CB rank $2$ in $\left(x=x/\pi|x\right)$ and hence also in $\Zg_\Lambda^{tf}$. Let $S$ be a simple $Q\Lambda$-module which is a direct summand of $QM$. Then, by \cite[2.7]{Tfpart}, $S$ is in the closure of $N$. Thus $S$ has Cantor-Bendixson rank $3$ as required.  
\end{proof}

The previous lemma plus \ref{pseudogenF} gives that when $\Lambda$ is a tame B\"ackstr\"om order $\mathbb{F}$ gives a bijective correspondence between the generic modules of $D$ and the pseudogeneric modules of $\Lambda$. When $D$ is tame hereditary, it only has finitely many generic modules. 

\begin{cor}
If $\Lambda$ is a tame B\"ackstr\"om order of infinite representation type then $\Lambda$ has finitely many pseudogeneric $R$-torsion-free modules.
\end{cor}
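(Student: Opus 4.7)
The plan is to exhibit a bijection between the isomorphism classes of pseudogeneric $\Lambda$-modules and the isomorphism classes of generic $D$-modules, and then invoke the known finiteness of the latter for tame hereditary algebras.

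First I would observe that, by \ref{mdimBaeckstroem}, the hypothesis that $\Lambda$ is tame B\"ackstr\"om of infinite (lattice/representation) type forces $\textnormal{m-dim}\,\pp^1_\Lambda\Tf_\Lambda=3$; in particular this dimension exists, so condition $(3)$ of \ref{psgentopprop} is satisfied. Hence the full biconditional version of \ref{pseudogenF} is at our disposal: an $R$-reduced indecomposable pure-injective $N\in\Tf_\Lambda$ is pseudogeneric if and only if $\mathbb{F}N$ is generic.

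Next I would establish that the assignment $N\mapsto \mathbb{F}N$ is a bijection between the pseudogeneric $\Lambda$-modules (up to isomorphism) and the generic $D$-modules (up to isomorphism). Pseudogenerics are $R$-reduced by definition, so they lie in the open subset $\left(\nf{x=x}{\pi|x}\right)$ on which \ref{RRhomeomorphism} asserts that $\mathbb{F}$ restricts to a homeomorphism onto its image in $\Zg(\mcal{D})$; this gives injectivity, while the forward direction of \ref{pseudogenF} ensures the image really consists of generics. For surjectivity, let $M$ be a generic $D$-module. Since $M$ is not finitely presented it cannot be a simple $D$-module, so by \ref{pinjinD} it lies in $\mcal{D}$. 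The homeomorphism in \ref{RRhomeomorphism} then supplies an $R$-reduced indecomposable pure-injective $N\in\Tf_\Lambda$ with $\mathbb{F}N\cong M$, and the reverse implication in \ref{pseudogenF} promotes $N$ to a pseudogeneric.

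The closing step is to invoke the classical fact that a tame hereditary finite-dimensional algebra admits only finitely many generic modules (one per tube family, by Crawley-Boevey). Since $D$ is tame hereditary whenever $\Lambda$ is a tame B\"ackstr\"om order, the bijection above forces $\Lambda$ to have only finitely many pseudogeneric $R$-torsion-free modules. I do not foresee a genuine obstacle here: the two substantive ingredients, \ref{pseudogenF} and \ref{mdimBaeckstroem}, have already been assembled, and the corollary is essentially a packaging of them with the classical finiteness of generics over tame hereditary algebras.
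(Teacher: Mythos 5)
Your argument is correct and is essentially the paper's own: the paper likewise combines \ref{mdimBaeckstroem} (to secure condition (3) of \ref{psgentopprop} and hence the biconditional in \ref{pseudogenF}) with \ref{RRhomeomorphism} to get a bijection between pseudogeneric $\Lambda$-modules and generic $D$-modules, and then cites the finiteness of generics over a tame hereditary algebra. Your write-up merely makes explicit the injectivity/surjectivity details (including the appeal to \ref{pinjinD} for surjectivity) that the paper leaves implicit.
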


\subsection{Tame B\"ackstr\"om orders}\label{tamebkzg}

Throughout this section we assume that $\Lambda$ is a tame B\"ackstr\"om order of infinite lattice type. By definition, $D$ is a tame hereditary algebra. Note that, even if we assume that $\Lambda$ is indecomposable, $D$ is not necessarily indecomposable (see \cite[Example 4]{RR}). Let $D=\prod_{i=1}^{n+1}D_i$ be such that $D_{n+1}$ is of finite representation type and that for $1\leq i\leq n$, $D_i$ is of infinite representation type and indecomposable as a ring. 

We briefly recall some information about the category of finite-dimensional $D$-modules (see \cite{DRGraph} and \cite{infringel}) and about the Ziegler spectrum of $D$ (see \cite{PrestTameher} and \cite{RingelZg}). Let $\tau$ denote the Auslander-Reiten translate which, since $D$ is hereditary, is an endofunctor on $\mod\text{-}D$. We call the finite-dimensional $D$-modules of the form $\tau^{-n}P$ where $P$ is a projective module and $n\in \N_0$ \textbf{preprojective}; finite-dimensional modules of the form $\tau^{n}I$ where $I$ is an injective module and $n\in \N_0$ \textbf{preinjective}; and finite-dimensional modules with no preprojective or preinjective direct summands \textbf{regular}. 
Note that an indecomposable finite-dimensional $D$-module is both preprojective and preinjective if and only if it is a $D_{n+1}$-module.

The (full) subcategory of regular modules is an abelian category and its inclusion into $\mod\text{-}D$ is exact. A $D$-module is \textbf{quasi-simple} if it is simple in the category of regular modules. We call a quasi-simple \textbf{type $i$} if it is a $D_i$-module. For each natural number $n$ and quasi-simple module $E$, there exists a unique regular module $E[n]$ which as an object in the category of regular modules is uniserial, has socle $E$ and length $n$. Dually, there exists a unique regular module $[n]E$ which as an object in the category of regular modules is uniserial, has top $E$ and length $n$. All indecomposable regular modules are of the form $E[n]$ (and dually of the form $[n]E$) for some quasi-simple $E$ and $n\in\N$.\looseness=-1


For each quasi-simple $E$, the direct limit along a chain of (irreducible) embeddings
\[E[1]\hookrightarrow E[2]\hookrightarrow E[3] \hookrightarrow\ldots\] is denoted $E[\infty]$. Dually, for each $E$, the inverse limit along a chain of (irreducible) epimorphisms
\[[1]E\leftarrow [2]E\leftarrow [3]E\leftarrow\ldots\] is denoted $\widehat{E}$. The modules $E[\infty]$ and $\widehat{E}$ are both pure-injective and indecomposable. There are finitely many remaining infinite dimensional indecomposable pure-injective $D$-modules. Namely, for each $1\leq i\leq n$, there is a unique indecomposable generic $D_i$-module. 

By \ref{pinjinD}, all infinite dimensional indecomposable pure-injective $D$-modules are in the image of $\mathbb{F}$. Thus $\mathbb{F}$ gives a bijective correspondence between the $R$-reduced modules in $\Zg_\Lambda^{tf}$ which are not $\Lambda$-lattices and the infinite dimensional points in $\Zg_D$. For each $1\leq i\leq n$, let $G_i$ denote the unique pseudogeneric module in $\Mod\text{-}\Lambda$ such that $\mathbb{F}G_i$ is the unique generic $D_i$-module \ref{pseudogenF}.

The topology on $\Zg_D$ for $D$ an indecomposable tame hereditary algebra is described in \cite{PrestTameher} and \cite{RingelZg}. We use the formulation from \cite{RingelZg}.

\begin{theorem}
A subset $C\subseteq \Zg_\Lambda^{tf}$ is closed if and only if the following conditions hold:
\begin{enumerate}
\item If $\mathbb{F}S$ is quasi-simple and if there exist infinitely many indecomposable $X\in \Latt_\Lambda$ with $\Hom_D(\mathbb{F}S,\mathbb{F}X)\neq 0$ then $\mathbb{F}^{-1}(\mathbb{F}S[\infty])\in C$.
\item If $\mathbb{F}S$ is quasi-simple and if there exist infinitely many indecomposable $X\in \Latt_\Lambda$ with $\Hom_D(\mathbb{F}X,\mathbb{F}S)\neq 0$ then $\mathbb{F}^{-1}(\widehat{\mathbb{F}S})\in C$.
\item If there are infinitely many indecomposable $X\in \Latt_\Lambda$ of type $i$ in $C$ or if there exists a least one module of the form $\mathbb{F}^{-1}(\mathbb{F}S[\infty])$ or $\mathbb{F}^{-1}(\widehat{\mathbb{F}S})$ where $\mathbb{F}S$ is a quasi-simple of type $i$ in $C$, then $G_i\in C$.
\item For each simple $Q\Lambda$-module $M$, if there exists $N\in C$ such that $M$ is a direct summand of $NQ$ then $M\in C$.
\end{enumerate}
\end{theorem}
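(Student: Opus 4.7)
The plan is to reduce the statement to the known description of closed subsets of $\Zg_D$ for the tame hereditary algebra $D$, transport this via Corollary \ref{RRhomeomorphism} to the $R$-reduced part of $\Zg_\Lambda^{tf}$, and then patch in the contribution of the finitely many divisible simple $Q\Lambda$-modules via the open sets $\mcal{V}(S)$ and Proposition \ref{topFdesc}.

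First, I would set up the correspondence between closed sets. By Remark \ref{pinjinD} the indecomposable pure-injective $D$-modules outside $\mcal{D}$ are the finitely many simple $D$-modules, all of which are isolated in $\Zg_D$. Consequently $\Zg(\mcal{D})$ is closed in $\Zg_D$ with complement a finite set of isolated points, so a subset of $\Zg(\mcal{D})$ is closed in $\Zg(\mcal{D})$ if and only if it is closed in $\Zg_D$. Combining this with \ref{RRhomeomorphism}, a subset $C_r \subseteq \left(\nf{x=x}{\pi|x}\right)$ is closed if and only if $\mathbb{F}(C_r)$ is closed in $\Zg_D$.

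Second, I would invoke the description of closed subsets of $\Zg_D$ for $D$ tame hereditary due to Prest \cite{PrestTameher} and Ringel \cite{RingelZg}: for each quasi-simple $E$, a closed set $V \subseteq \Zg_D$ contains $E[\infty]$ precisely when it contains infinitely many finite-dimensional indecomposables $Y$ with $\Hom_D(E, Y) \neq 0$; dually for $\widehat{E}$; and $V$ contains the generic of type $i$ precisely when it contains either infinitely many finite-dimensional indecomposables of type $i$ or one of the Pr\"ufer or adic modules of type $i$. By Theorem \ref{RRFun}, every indecomposable finite-dimensional module in $\mcal{D}$ is of the form $\mathbb{F}X$ for some indecomposable $X \in \Latt_\Lambda$, and by Proposition \ref{pseudogenF} the generics of $D$ correspond under $\mathbb{F}$ to the pseudogenerics $G_1,\ldots,G_n$. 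Translating each of these closure clauses through the homeomorphism of step one yields, respectively, conditions (1), (2) and (3) of the theorem. Condition (4) handles the divisible points: by the fact recalled just before \cite[5.11]{Maranda}, a simple $Q\Lambda$-module $S$ lies in $\overline{\{N\}} \subseteq \Zg_\Lambda^{tf}$ if and only if $N \in \mcal{V}(S)$, if and only if $S \mid NQ$, which gives the ``only if'' direction immediately. For the ``if'' direction, (1)--(3) and step one give that $C \cap \left(\nf{x=x}{\pi|x}\right)$ is closed in the open set of $R$-reduced points; (4) together with Proposition \ref{topFdesc} then writes $C$ as $\overline{V} \cup W$ with $V := \mathbb{F}\bigl(C \cap \left(\nf{x=x}{\pi|x}\right)\bigr)$ closed in $\Zg_D$ and $W \subseteq \Zg_{Q\Lambda}$, so $C$ is closed.

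The main obstacle lies in the matching of step two. Ringel's clauses refer to finite-dimensional indecomposables $Y$ in the closed set $V$, whereas conditions (1)--(3) are phrased in terms of lifts $X \in \Latt_\Lambda$, so one has to use the fullness (but not faithfulness) of $\mathbb{F}|_{\Latt_\Lambda}$ from Theorem \ref{RRFun} and keep track of the fact that distinct indecomposable lattices may have isomorphic image under $\mathbb{F}$. A further point of care is that a $\Lambda$-lattice $X$ may have $\mathbb{F}X$ with indecomposable summands of several different types (since $D$ need not be indecomposable), so one should isolate each type when transferring the generic-accumulation clause.
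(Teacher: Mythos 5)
Your proposal is correct and follows essentially the same route as the paper: transport Ringel's description of closed subsets of $\Zg_D$ for tame hereditary algebras across the homeomorphism of Corollary \ref{RRhomeomorphism} to get clauses (1)--(3) on the $R$-reduced part, then handle the finitely many $R$-divisible points via $\mcal{V}(S)$ and Proposition \ref{topFdesc} for clause (4). One small remark on your closing worry: the homeomorphism of \ref{RRhomeomorphism} already gives a bijection on isomorphism classes of indecomposable pure-injectives outside $\ker\mathbb{F}$, so distinct indecomposable $\Lambda$-lattices cannot have isomorphic images under $\mathbb{F}$ — the non-faithfulness of $\mathbb{F}$ on morphisms does not threaten the object-level matching you need.
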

\begin{proof}
If $C$ is closed then that $(1)$, $(2)$ and $(3)$ are true of $C$ follows from the description of the Ziegler spectra of tame hereditary algebras given in \cite{RingelZg}. Property (4) holds by \cite[2.7]{Tfpart}. 

Now suppose that $C$ is a subset of $\Zg_\Lambda^{tf}$ and $(1)$-$(4)$ are true of $C$. Then, since $(1)$-$(3)$ are true of $C$, the set $V$ of $\mathbb{F}N$ such that $N\in C$ and $N$ is not $R$-divisible is a closed subset of $\Zg_D$ by \ref{RRhomeomorphism}. It now follows from $(4)$ that $C$ is
\[\{N\in \left(\nf{x=x}{\pi|x}\right) \st \mathbb{F}N\in V\}\cup\{S\in\Zg_{Q\Lambda}\st \text{ there exists }N\in \left(\nf{x=x}{\pi|x}\right) \text{ with } S|NQ \text{ and } \mathbb{F}N\in V\}\cup W\] where $W$ is some subset of $\Zg_{Q\Lambda}$.  So by \ref{topFdesc}, $C$ is closed. 
\end{proof}

We now give a description of the modules $\mathbb{F}^{-1}(\mathbb{F}S[\infty])$ as $\Lambda$-modules. We are not able to give a description of the modules $\mathbb{F}^{-1}(\widehat{\mathbb{F}S})$ but one might guess that this module is isomorphic to the inverse limit along the irreducible epimorphisms $\mathbb{F}S[i+1]\rightarrow \mathbb{F}S[i]$. However, it is unclear whether this module is indecomposable.  

Roggenkamp, in \cite{ARBaeckstroem}, showed how to obtain the Auslander-Reiten quiver of $\Latt_\Lambda$ from the Auslander-Reiten quiver of $\mod\text{-}D$ using $\mathbb{F}$. It is a consequence of \cite[1.11(ii)]{ARBaeckstroem} that each of the stable tubes in the Auslander-Reiten quiver of $D$ lifts to a stable tube in the Auslander-Reiten quiver of $\Latt_\Lambda$. If $S\in\Latt_\Lambda$ is such that $\mathbb{F}S$ is quasi-simple then we write $S[n]$ (respectively $[n]S$) for the $L\in \Latt_\Lambda$ with $\mathbb{F}L=(\mathbb{F}S)[n]$ (respectively $\mathbb{F}L=[n](\mathbb{F}S)$).

For $M\in\Mod\text{-}\Lambda$, we write $M^\star$ for the inverse limit along the surjective maps $M/M\pi^{n+1}\rightarrow M/M\pi^n$ for $n\in\N$. Note that if $M$ is $R$-reduced then so is $M^\star$.

\begin{proposition}\label{pseudopruefer}
Let $S\in\Latt_\Lambda$ be such that $\mathbb{F}S$ is quasi-simple and let $S[\infty]$ denote the direct limit along a ray of irreducible maps $S[n]\rightarrow S[n+1]$. Then $S[\infty]^\star$ is an $R$-torsionfree $R$-reduced indecomposable pure-injective $\Lambda$-module and $\mathbb{F}(S[\infty]^\star)$ is isomorphic to $(\mathbb{F}S)[\infty]$.
\end{proposition}
\begin{proof}
It is explained in \cite{Maranda}, see \cite[4.9]{Maranda} and the discussion above the question on page 596 of that article, that $S[\infty]^\star$ is the pure-injective hull of $E[\infty]$. Since $\mathbb{F}$ commutes with direct limits and the irreducible maps $S[n]\rightarrow S[n+1]$ are sent to irreducible maps $(\mathbb{F}S)[n]\rightarrow (\mathbb{F}S)[n+1]$ \cite[1.12]{ARBaeckstroem}, $\mathbb{F}S[\infty]\cong (\mathbb{F}S)[\infty]$. Since $\mathbb{F}$ is full on pure-injectives and hence, \cite[3.15 \& 3.16]{Intmodinmod} preserves pure-injective hulls, $\mathbb{F}S[\infty]^\star\cong (\mathbb{F}S)[\infty]$.
Finally, if $S[\infty]^\star=N\oplus M$ then either $\mathbb{F}N=0$ or $\mathbb{F}M=0$. Hence, by \ref{RRpureinj}, either $N$ or $M$ is $R$-divisible. This contradicts that fact that $S[\infty]^\star$ is $R$-reduced.\looseness=-1
\end{proof}
\bibliographystyle{alpha}
\bibliography{Intfunfullonpinj}
\end{document}